\definecolor{rltblue}{rgb}{0,0,0.4}
\definecolor{drkred}{rgb}{0.6,0,0}
\definecolor{drkgreen}{rgb}{0,0.4,0}
\newcommand{\domark}[1]{%
  \vbox to 0pt{
    \kern-\dp\strutbox
    \hbox{\smash{\llap{#1\kern1em}}}
    \vss
  }%
}
\providecommand{\tightlist}{%
	\setlength{\itemsep}{0pt}\setlength{\parskip}{0pt}}
\declaretheorem{theorem}
\declaretheorem[sibling=theorem]{lemma}
\declaretheorem[sibling=theorem]{proposition}
\declaretheorem[sibling=theorem]{corollary}
\declaretheorem[sibling=theorem,style=definition]{definition}
\declaretheorem[numberwithin=theorem,title=Claim]{claim}
\newcommand{\M}{\mathcal{M}}
\newcommand{\A}{\mathcal{A}}
\newcommand{\B}{\mathcal{B}}
\newcommand{\C}{\mathcal{C}}
\newcommand{\N}{\mathcal{N}}
\newcommand{\SR}{\text{SR}}
\newcommand{\mc}[1]{\mathcal{#1}}
\newcommand{\LR}{\Leftrightarrow}
\newcommand{\Pinf}[1]{\Pi^{\mathrm{in}}_{#1}}
\newcommand{\Sinf}[1]{\Sigma^{\mathrm{in}}_{#1}}
\newcommand{\Dinf}[1]{\Delta^{\mathrm{in}}_{#1}}
\newcommand{\dSinf}[1]{d\text{-}\Sigma^{\mathrm{in}}_{#1}}
\newcommand{\PA}{\mathrm{PA}}
\newcommand{\SSy}{\mathrm{SSy}}
\newcommand{\tp}{\mathrm{tp}}
\newcommand{\Th}{\mathrm{Th}}
\renewcommand{\phi}{\varphi}
\newcommand{\bigwwedge}{%
	\mathop{
		\mathchoice{\bigwedge\mkern-15mu\bigwedge}
		{\bigwedge\mkern-12.5mu\bigwedge}
		{\bigwedge\mkern-12.5mu\bigwedge}
		{\bigwedge\mkern-11mu\bigwedge}
	}
}
\newcommand{\bigvvee}{%
	\mathop{
		\mathchoice{\bigvee\mkern-15mu\bigvee}
		{\bigvee\mkern-12.5mu\bigvee}
		{\bigvee\mkern-12.5mu\bigvee}
		{\bigvee\mkern-11mu\bigvee}
	}
}
\newmdtheoremenv[backgroundcolor=cyan]{theorem-prove}{Theorem}[theorem]
\newmdtheoremenv[backgroundcolor=cyan]{lemma-prove}{Lemma}[theorem]
\newmdtheoremenv[backgroundcolor=cyan]{proposition-prove}{Proposition}[theorem]
\newmdtheoremenv[backgroundcolor=yellow!40]{theorem-check}{Theorem}[theorem]
\newmdtheoremenv[backgroundcolor=yellow!40]{lemma-check}{Lemma}[theorem]
\newmdtheoremenv[backgroundcolor=yellow!40]{proposition-check}{Proposition}[theorem]
\newcommand{\SSprank}{\mathrm{RankSpec}}
\def\hbar{{\bar{h}}}
\def\om{\omega}
\def\Si{\Sigma}
\def\A{\mathcal A}
\def\B{\mathcal{B}}
\def\C{\mathcal{C}}
\def\M{{\mathcal M}}
\def\N{{\mathcal N}}
\def\dSi{d\text{-}\Si}
\def\Lomom{L_{\om_1,\om}}
\newtheorem{thm}{Theorem}
\theoremstyle{remark}
\newtheorem{remark}[thm]{Remark}
\def\and{\mathrel{\&}}
\newcommand{\define}[1]{\textbf{#1}}
\newcommand{\ba}{\overline{a}}
\newcommand{\bb}{\overline{b}}
\newcommand{\bc}{\overline{c}}
\newcommand{\bd}{\overline{d}}
\title{Classifying the Complexities of Models of Arithmetic}
\author{David Gonzalez}
\author{Mateusz \L{}e\l{}yk}
\author{Dino Rossegger}
\author{Patryk Szlufik}
\address[Rossegger]{Institut f\"ur Diskrete Mathematik und Geometrie\\
  Technische Universit\"at Wien\\
  Wiedner Hauptstra{\ss}e 8-10\\
  1040 Wien\\
  AUSTRIA}
\email{\href{mailto:dino.rossegger@tuwien.ac.at}{dino.rossegger@tuwien.ac.at}}
\urladdr{\url{https://drossegger.github.io/}}
\address[\L{}e\l{}yk]{Faculty of Philosophy\\
  University of Warsaw\\
  Krakowskie Przedmieście 26/28\\
  00-927 Warsaw\\
  POLAND}
\email{\href{mailto: mlelyk@uw.edu.pl}{mlelyk@uw.edu.pl}}
\urladdr{\url{https://sites.google.com/uw.edu.pl/lelyk}}
\address[Gonzalez]{University of Notre Dame\\
Department of Mathematics\\
Hurley Hall, 255 Hurley, Notre Dame, IN 46556\\
  USA}
\email{\href{dgonza42@nd.edu}{dgonza42@nd.edu}}
\urladdr{\url{https://www.davidgonzalezlogic.com}}
\address[Szlufik]{Faculty of Mathematics, Informatics and Mechanics\\ University of Warsaw\\
  Krakowskie Przedmieście 26/28\\
  00-927 Warsaw\\
  POLAND}
\email{\href{mailto:p.szlufik@uw.edu.pl}{p.szlufik@uw.edu.pl}}
\urladdr{\url{https://sites.google.com/uw.edu.pl/szlufik/}}
\keywords{Peano arithmetic, countable models, models of arithmetic, Scott rank, Scott complexity, Scott analysis}
\subjclass[2020]{03C62,03E15,03H15}
\thanks{The work of the second author was supported by the National Science Centre (NCN) grant no. 2022/46/E/HS1/00452. The work of the third author was supported by the Austrian Science Fund (FWF) 10.55776/PIN1878224}
\begin{document}
\maketitle
\begin{abstract}
    We classify the possible Scott complexities for models of Peano arithmetic.
We construct models of particular complexities by first giving a complete Scott analysis of colored linear orderings and constructing models of Peano arithmetic from these colored orderings.
We also provide tight connections of certain Scott complexities with notions from the classical theory of models of Peano arithmetic, such as prime, finitely generated, and recursively saturated. 
This effort provides a powerful set of tools to understand the models of Peano arithmetic.
\end{abstract}

\section{Introduction}

It is straightforward to see that every finite structure $\A$ in finite vocabulary has a sentence $\phi_\A$ of finitary first-order logic so that the models of this sentence are precisely the structures isomorphic to it. Unfortunately, if we turn our attention to infinite structures, this fails. By a simple application of compactness, one can show that for most countable structures $\A$, even their first-order theory $\Th(\A)$ has countable models that are not isomorphic to $\A$; such theories fail to be what model theorists call \emph{countably categorical}. In some sense, a theory that is not countably categorical fails to characterize the structural properties of its models. Even among simple structures that are encountered in everyday mathematics, we can find examples of structures whose theories are not countably categorical. The example most central to modern-day mathematics is probably the standard model of arithmetic $(\mathbb N,+,\cdot)$. 

One could, of course instead take a more proof-theoretic approach by starting with a theory and asking whether this theory has a single model. However, in this case, the situation becomes even worse. For example, by Gödel's famous incompleteness theorem, \emph{Peano arithmetic} ($\PA$ for short)---the standard axiomatization of number theory---is incomplete and possesses no recursive completions. One can then show that none of its completions is countably categorical and, indeed, every such completion has $2^{\aleph_0}$ models.

But also when considered from other angles, Peano arithmetic is a consummately wild theory. For example, model theory is a subfield of mathematical logic that analyzes mathematical structures based on their first-order theories. Modern model theory, or neo-stability theory, concerns itself a great deal with characterizing and capturing various notions of mathematical tameness. However, due to G\"odel's initial observations about the coding power of Peano arithmetic, this theory and its models stand outside any notion of model-theoretic tameness. See~\cite{forkanddivide} for a graphical representation of tameness properties used in modern model theory; Peano arithmetic is a member of the delta quadrant, so it escapes all studied notions of tractability.

Despite the inherent difficulty in studying Peano arithmetic and the failure of most model-theoretic techniques to obtain insight into structural properties of its models, researchers have still made impressive efforts and proved exceptional results. In fact, the study of models of Peano arithmetic has been one of the most vital efforts in logic for the nearly one hundred years since Peano introduced the axiomatization. In the study of models of arithmetics, researchers usually combine model-theoretic techniques with the special properties of models of Peano arithmetic, such as their immense coding power, to gain new structural insights, see~ \cite{KS} for an overview of results in this area.

In this paper, we take a different approach to study the structural properties of models of Peano arithmetic. Instead of finitary first-order logic, we turn to the infinitary logic $\Lomom$, allowing countable conjunctions and disjunctions in our language. Scott~\cite{Sco65} showed that, in contrast to the situation with first-order logic, we can classify countable structures up to isomorphism in the logic $\Lomom$. In this logic, every countable structure $\A$, including $(\mathbb{N},+,\cdot)$ and other models of Peano arithmetic, has a $\Lomom$-sentence $\phi_\A$ the countable models of which are precisely the structures isomorphic to $\A$. We call such a sentence a \define{Scott sentence}.
Scott's work lies in the tradition of computable structure theory, not first-order model theory, and his result was the first in the field we now call \define{Scott analysis}.
In this article, we use recent developments in Scott analysis to reframe the theory of models of Peano arithmetic that had previously only been considered in a more ad hoc manner and give a complete analysis of the structural complexities of its models.

In Scott analysis, the structural complexity of a model is measured by the complexity of its simplest Scott sentence. We can naturally extend the classical quantifier-alternation rank to $\Lomom$ and through this obtain various complexity measures on sentences, and therefore their models. The best-known of these measures are the various notions of \emph{Scott rank} emerging from Scott's proof. To define this notion let us first recall the definitions of quantifier-rank in $\Lomom$.
\begin{definition}
	We let $\Pinf{0}=\Sinf{0}$ be the class of finitary quantifier-free formulas. For countable ordinals $\alpha>0$ and $\phi\in \Lomom$ we let 
	\begin{itemize}
		\item $\phi\in \Sinf{\alpha}$ if it is logically equivalent to a formula of the form $\bigvvee_i \exists \bar x\psi_i(\bar x)$ where $\psi_i\in \Pinf{\gamma_i}$ for some $\gamma_i<\alpha$;
		\item $\phi\in \Pinf{\alpha}$ if it is logically equivalent to a formula of the form $\bigwwedge_i \forall\bar x\psi_i(\bar x)$ where $\psi_i\in \Sinf{\gamma_i}$ for some $\gamma_i<\alpha$.
	\end{itemize}
	Furthermore, $\phi\in \dSinf{\alpha}$ if there is $\psi\in \Sinf{\alpha}$ and $\theta\in \Pinf{\alpha}$ so that $\phi$ is logically equivalent to $\psi\land \theta$.
\end{definition}
It is not hard to see that neither $\Pinf{\alpha}$ nor $\Sinf{\alpha}$ are subsets of each other and that $\dSinf{\alpha}\subseteq \Pinf{\alpha+1}\cap \Sinf{\alpha+1}$.
We can now introduce the notions of Scott rank that we will use. Our definitions follow Montalbán~\cite{MonSR}, who showed that these notions of rank interact nicely with various other complexity measures~\cite{MonSR}. Montalbán defined the \define{Scott rank} of $\A$ to be the least ordinal $\alpha$ so that $\A$ has a $\Pinf{\alpha+1}$ Scott sentence and showed that among others this rank corresponds with the following "internal complexity measure": It is the least $\alpha$ so that every $\Pinf{\alpha}$-type realized in $\A$ is supported by a $\Sinf{\alpha}$-formula, or equivalently, so that every automorphism orbit in $\A$ is $\Sinf{\alpha}$-definable. Connected to this is the \define{parametrized Scott rank} of $\A$, the least $\alpha$ so that $\A$ has a $\Sinf{\alpha+2}$ Scott sentence. Montalbán showed that a structure has parametrized Scott rank $\alpha$ if and only if there is a parameter $\ba$ from $\A$ so that $(\A,\ba)$ has Scott rank $\alpha$. Alvir, Greenberg, Harrison-Trainor, and Turetsky~\cite{AGNHTT} showed that every countable structure $\A$ has an optimal Scott sentence and that this sentence is of complexity either $\Sinf{\alpha}$, $\Pinf{\alpha}$, or $\dSinf{\alpha}$ for some least ordinal $\alpha$. They called this ordinal the \define{Scott (sentence) complexity} of $\A$. Strictly speaking, Scott complexity is not a rank, as its range is not linearly ordered since there are structures with $\Pinf{\alpha}$ but no $\Sinf{\alpha}$ Scott sentences and vice versa. However, it has the advantage of being finer than the notions of rank defined above and is related to them via the complexity of the automorphism orbit of the parameters involved in the parametrized Scott rank. The relationship between these ranks can be found in~\cite{MBook}, while the case for limit ordinals was recently resolved in~\cite{GR23} using the additional concept of "unstable sequences".

To obtain a structural complexity measure for theories from these considerations we consider the collection of Scott ranks realized by their models. This collection is called the theory's \define{Scott spectrum} and defined as
\[ \SSprank(T)=\{\alpha : (\exists \A\models T) \text{ Scott rank of $\A$ is $\alpha$}\}
\]
We could define Scott spectra also for Scott complexity, but we opt to go one step further. Instead of just asking whether there are models of a theory of a certain complexity we also want to know how many and thus define the following.
\begin{definition}
	Given a theory $T\in\Lomom$ and a complexity $\Gamma\in\{\Sinf{\alpha},\dSinf{\alpha},\Pinf{\alpha}\}_{\alpha\in\omega_1}$, the \define{Scott complexity counting function} $I$ is defined by
    \[ I(T,\Gamma) = \# \{\mc{M}\models T\mid SSC(\mc{M})=\Gamma\} .\]
\end{definition}

The goal of this article is to give a comprehensive Scott analysis of Peano arithmetic $\PA$ and its completions by calculating their counting functions.
This involves understanding interactions between the classical theory of models of Peano arithmetic and Scott analytic notions.
Montalbán and Rossegger~\cite{MR23} calculated $\SSprank(\PA)$ and $\SSprank(T)$ for all completions $T$ of $\PA$. They showed that the standard model $\mathbb N$ is the unique model of $\PA$ that has finite Scott rank (indeed it has Scott rank $1$) and proved the following characterization.
\begin{theorem}[\cite{MR23}]\label{thm:monross}
    Let $T$ be a completion of $\PA$.
		\begin{enumerate}
			\item If $T=\Th(\mathbb N)$, then $\SSprank(T)=\{1\}\cup \{\alpha<\omega_1: \alpha>\omega\}$.
			\item If $T\neq\Th(\mathbb N)$, then $\SSprank(T)=\{\alpha<\omega_1: \alpha\geq\omega\}$.
		\end{enumerate}
		Thus, $\SSprank(\PA)=\{1\}\cup\{ \alpha<\omega_1: \alpha\geq \omega\}$.
\end{theorem}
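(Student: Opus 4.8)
The plan is to prove the theorem by establishing five ingredients and then assembling them.
(a) The standard model $\mathbb{N}$ has Scott rank $1$.
(b) Every nonstandard model of $\PA$ has Scott rank at least $\omega$.
(c) For every completion $T\neq\Th(\mathbb{N})$, some model of $T$ has Scott rank exactly $\omega$.
(d) For every completion $T$ and every countable $\alpha>\omega$, some model of $T$ has Scott rank exactly $\alpha$.
(e) No model of $\Th(\mathbb{N})$ has Scott rank $\omega$.
Granting these, the two cases follow by bookkeeping. If $T\neq\Th(\mathbb{N})$, every model of $T$ is nonstandard, so by (b) no finite rank occurs, while (c) and (d) realize every $\alpha\geq\omega$; hence $\SSprank(T)=\{\alpha<\omega_1:\alpha\geq\omega\}$. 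If $T=\Th(\mathbb{N})$, then $\mathbb{N}$ realizes rank $1$ by (a), every other model is nonstandard and hence of rank $\geq\omega$ by (b), rank $\omega$ is excluded by (e), and every $\alpha>\omega$ is realized by (d); hence $\SSprank(\Th(\mathbb{N}))=\{1\}\cup\{\alpha<\omega_1:\alpha>\omega\}$. The statement for $\PA$ is the union.

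Ingredient (a) is routine: in $\mathbb{N}$ every element is the value of a numeral, so every tuple is the unique solution of a quantifier-free formula, every automorphism orbit is $\Sinf{1}$-definable, and the conjunction of ``$\forall x\,\bigvvee_n x=\underline{n}$'' with the atomic diagram of $\mathbb{N}$ on the numerals is a $\Pinf{2}$ Scott sentence; that $\mathbb{N}$ admits no $\Pinf{1}$ Scott sentence, so that its rank is exactly $1$ and not $0$, is a short separate check. For (b) one shows that no nonstandard $\M\models\PA$ is $n$-atomic for any finite $n$: using the coding power of $\PA$ (partial satisfaction classes and overspill) one produces, for each $n$, two tuples of $\M$ with the same $\Sinf{n}$-type but lying in distinct automorphism orbits, which shows that some orbit of $\M$ is not $\Sinf{n}$-definable; by the characterization of Scott rank recalled in the introduction, the rank of $\M$ is therefore at least $\omega$.

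For ingredient (c), take the prime model $\M_T$ of $T$, which exists because $\PA$ has definable Skolem functions via the least-number principle. Since $T\neq\Th(\mathbb{N})$, every model of $T$---in particular $\M_T$---is nonstandard, so by (b) its Scott rank is $\geq\omega$; on the other hand $\M_T$ is pointwise first-order definable, so each of its automorphism orbits is a singleton and hence $\Sinf{\omega}$-definable, whence the rank of $\M_T$ is $\leq\omega$ and therefore exactly $\omega$. This is exactly where $T\neq\Th(\mathbb{N})$ is used: the prime model of $\Th(\mathbb{N})$ is $\mathbb{N}$ itself, which is standard and of rank $1$. Ingredient (e) is the delicate point that keeps rank $\omega$ out of $\SSprank(\Th(\mathbb{N}))$. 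A rank-$\omega$ model is nonstandard and $\Sinf{\omega}$-atomic (every automorphism orbit is $\Sinf{\omega}$-definable), so one must show that no nonstandard model of $\Th(\mathbb{N})$ is $\Sinf{\omega}$-atomic. At the first level this is easy: every $\Sinf{1}$-definable subset of a model of $\Th(\mathbb{N})$ is a countable union of existentially definable sets, and because existential sentences reflect from $\M$ down to $\mathbb{N}\preceq\M$, no such set can be a nonstandard automorphism orbit. The content of (e) is to push this through at all finite levels simultaneously, formalizing the idea that true arithmetic is too rich for bounded-complexity infinitary formulas to isolate the orbits of a nonstandard model.

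The bulk of the work, and the main obstacle, is ingredient (d): realizing each countable $\alpha>\omega$ as the Scott rank of a model of a prescribed completion $T$. My approach is to transfer structures of known Scott rank into models of $\PA$. Linear orders---and, more flexibly, colored linear orders---provide structures of every countable Scott rank, so one builds a model of $T$ whose automorphism-orbit structure faithfully mirrors that of a colored order of rank $\alpha$, using the coding power of $\PA$ to carry the transfer through. The delicate part is the two-sided estimate on the Scott rank of the resulting model: a back-and-forth analysis must bound it above by $\alpha$, and one must simultaneously exhibit a pair of tuples realizing the $\alpha$-th back-and-forth relation but not automorphic, pinning the rank from below. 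The Scott analysis of colored linear orderings required to drive this construction is precisely what is developed in the body of the paper.
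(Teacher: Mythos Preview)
This theorem is quoted from \cite{MR23} rather than proved in the present paper, so there is no proof here to compare against directly; still, the paper contains (or recalls from \cite{MR23}) all the ingredients. Your decomposition into (a)--(e) is the right skeleton, and (a)--(c) are fine. Two points need correction.

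In (d) you have the rank transfer wrong. The canonical extension $\mc{L}\mapsto\N_{\mc{L}}$ is a $\Dinf{\omega}$ (not $\Dinf{1}$) bi-interpretation, and accordingly $\N_{\mc{L}}$ has Scott rank $\omega+\SR(\mc{L})$, not $\SR(\mc{L})$; see \cref{prop:ComplexityTransfer} for the complexity-level statement, from which the rank version follows. So to hit a target rank $\alpha>\omega$ you must start from an order of rank $\beta$ with $\omega+\beta=\alpha$, not from an order of rank $\alpha$. As written, your construction would miss every rank in the interval $(\omega,\omega\cdot 2]$. Incidentally, \emph{uncolored} linear orders already realize every rank $\beta\geq 1$, so colored orders are unnecessary for the rank spectrum; they are introduced in this paper only because uncolored orders miss the Scott complexity $\Sinf{3}$, which matters for the counting function but not for \cref{thm:monross}.

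In (e) your sketch is not an argument: ``push this through at all finite levels'' is precisely the difficulty, and the reflection of existential formulas down to $\mathbb{N}$ that you invoke at level $1$ has no evident analogue at higher $n$. The argument that actually works (\cref{thm_srundefinMOPA}) is quite different: for a non-definable $b$ and each $n$, the coded type $\{x\neq b\}\cup\Sigma_{n+1}\text{-}\tp(b)$ is realized in $K(b)$ by some $c\neq b$; then $c$ and $b$ share their $\Sigma_n$-type---hence their $\Sinf{n}$-type by \cref{thm:SsyEqn2}---but have distinct full types by the Ehrenfeucht Lemma, so the orbit of $b$ is not $\Sinf{n}$-definable for any finite $n$.
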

At the end of~\cite{MR23}, they asked several questions and in this article we answer them all. 
\begin{enumerate}
	\item We show that no non-atomic model of Peano arithmetic has Scott rank $\omega$ (\cref{thm_srundefinMOPA} answering~\cite[Question 1]{MR23}),
	\item that no model of $\PA$ has Scott complexity $\Pinf{\omega}$ (\cref{cor_noPiomega} answering~\cite[Question 3]{MR23}),
	\item and give a comprehensive analysis of the counting function for completions of $\PA$ (\cref{table:results} answering~\cite[Question 2]{MR23}).
\end{enumerate}

\begin{table}[ht]
	\begin{tblr}{colspec={|Q[c,m]|Q[c,m,$]|Q[c,m,$]|Q[c,m,$]|Q[l,m]|}}
		\hline
		\diagbox[innerwidth=4cm]{SSC}{Th}  & \PA & \Th(\mathbb N) & T\neq \Th(\mathbb N) & Reference\\
      \hline\hline
		$\Pinf{2}$ & 1 & 1 & 0 & $\mathbb N$\\
		other finite complexities& 0 & 0 &0 &\cite{MR23}\\
		$\Pinf{\omega}$ & 0 & 0 & 0 &\cref{cor_noPiomega}\\
		$\Pinf{\omega+1}$ & 2^{\aleph_0} & 0 & 1 &{atomic models \\\cref{cor_noPiomega}}\\
		$\Sinf{\omega+1}$ & 0 &0 &0  & \cref{cor_noSigmaomega+1}\\
		$\dSinf{\omega+1}$ & 2^{\aleph_0}& 2^{\aleph_0} & 2^{\aleph_0} & {fin.\ generated non-atomic\\\cref{prop_fingenchar}}\\
		$\Sinf{\omega+2}$ & 0 & 0 & 0 & \cref{cor_noSigmaomega+2}\\
		$\Pinf{\omega+\alpha}$ $\alpha>1$ & 2^{\aleph_0} & 2^{\aleph_0} & 2^{\aleph_0} & \cref{thm:transferedcomplexities}\\
		$\dSinf{\omega+\alpha}$ successor $\alpha>1$ &  2^{\aleph_0} & 2^{\aleph_0} & 2^{\aleph_0} & \cref{thm:transferedcomplexities}\\
		$\Sinf{\omega+\alpha}$ successor $\alpha>2$ & 2^{\aleph_0} & 2^{\aleph_0} & 2^{\aleph_0} & \cref{thm:transferedcomplexities}\\
			\hline
    \end{tblr}
    \caption{$I(T,\Gamma)$ for $\PA$ and its complete extensions. Complexities $\Gamma$ not
    in the table are impossible for structures in general.}
    \label{table:results}
  \end{table}

One approach to facilitate a Scott analysis of a theory like Peano arithmetic is to use a notion of reduction that preserves Scott rank, reduce a well-understood class of structures to your theory, and then use the structures in the image as examples for the Scott sentence complexities. This is, in essence, what Montalbán and Rossegger~\cite{MR23} did to characterize the Scott ranks of $\PA$. They showed that finite Scott ranks are not possible and then reduced the class of linear orderings to the canonical $\omega$-jumps of models of $\PA$.
The issue with their approach is that if we sharpen our lens to distinguish Scott sentence complexities, then not every Scott complexity occurs in the class of linear orderings. 

To overcome this, we turn to structures slightly more complicated than linear orderings, \emph{colored linear orderings}, i.e., linear orderings together with a partition of their universe in finitely or countably many subsets. We provide a comprehensive Scott analysis of this class in \cref{sec:coloredlo}. In \cref{sec:coloredtomodels} we adapt the reduction from~\cite{MR23} to reduce colored linear orderings to models of a fixed completion of $\PA$ and show that our reduction preserves Scott sentence complexities. At last, in \cref{sec:missing} we provide an analysis of the Scott sentence complexities not covered by our reduction and of models of $\PA$ that have seen interest in the literature such as finitely generated models and short recursively saturated models.

\section{Colored Linear Orderings}\label{sec:coloredlo}
In what follows, we will interpret colored linear orderings in models of Peano arithmetic.
This will allow us to push forward Scott analysis results about colored linear orderings to models of Peano arithmetic.
For this to be useful, however, we must first establish facts about colored linear orderings. 

For $k\leq \aleph_0$, a \define{$k$-colored linear ordering} $\mc{L}$ is a structure in the vocabulary $\{\leq,\{P_i\}_{i\in k}\}$ such that $\leq$ is a linear ordering and the $P_i$ partition the universe of $\mc{L}$. More formally, $\mc{L}$ satisfies the axioms $\forall x\;\bigvvee_{i\in k} P_i(x)$ and $\forall x \; \bigwwedge_{i,j\in k} \lnot \big( P_i(x) \land P_j(x) \big)$.
We say that $x\in \mc L$ has color $i$ if $P_i(x)$ and that $\mc L$ is \define{colored} if it is $k$-colored for some $k\leq \aleph_0$.

Note that if $k\leq m$, the identity function serves as an effective bi-interpretation from linear orderings with $k$ colors into linear orderings with $m$ colors. 
Furthermore, the function that takes the linear ordering $\mc{L}$ and adds every point into $P_0$ defines an effective bi-interpretation from linear orderings to linear orderings with 1 color.
Because this interpretation is surjective, we can treat linear orderings and linear orderings with 1 color as essentially the same theories and move back and forth along this bi-interpretation freely. 

The following definitions will be useful.
\begin{definition}\label{def:onecoloredlos}
	If $\mc L$ is a linear ordering and $i\in\omega$, then we write $\mc L_i$ for the monochromatic colored linear ordering where every point has color $i$.
\end{definition}
\begin{definition}\label{def:colorshuffle}
	Given $A\subseteq \omega$, the \define{color shuffle of $A$}, $Sh_c(A)$, is the structure with order type $\eta$ where each color in $A$ is dense (i.e., if $q_1\leq q_2$, then for all $i\in A$, there is $q$, $q_1<q<q_2$ so that $P_i(q)$).\end{definition}
The above definition says "the" color shuffle instead of "a" color shuffle because all color shuffles of $A$ are isomorphic.
This follows by a standard back-and-forth argument which we include below for completeness.
\begin{lemma}\label{lem:shufiso}
    For $\mc L$ and $\mc K$ color shuffles of $A$, if $\bar{a}\in\mc{L}$ and $\bar{b}\in \mc{K}$ are such that $qftp(\bar{a})=qftp(\bar{b})$, then
     $(\mc{L},\bar{a})\cong(\mc{K},\bar{b}).$
\end{lemma}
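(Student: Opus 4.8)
The statement is a colored refinement of Cantor's theorem, so the plan is to run a back-and-forth argument. Let $I$ be the collection of all finite partial functions $p$ from $\mc{L}$ to $\mc{K}$ that are partial isomorphisms for the colored-order vocabulary (that is, $p$ is injective, both $p$ and $p^{-1}$ are order-preserving, and $p$ preserves colors) and that extend the assignment $\bar a\mapsto\bar b$. First I would note that $I\neq\emptyset$: the hypothesis $qftp(\bar a)=qftp(\bar b)$ says precisely that the atomic diagram of $\bar a$ in $\mc{L}$ agrees with that of $\bar b$ in $\mc{K}$ (in this vocabulary the atomic formulas express only equalities, order relations, and color memberships among the listed elements), so $\bar a\mapsto\bar b$ is itself a partial isomorphism lying in $I$. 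It then suffices to verify that $I$ has the back-and-forth property; fixing enumerations $\mc{L}=(l_n)_{n\in\omega}$ and $\mc{K}=(k_n)_{n\in\omega}$ and alternately applying forth-steps (to put $l_n$ into the domain) and back-steps (to put $k_n$ into the range) produces an increasing chain in $I$ whose union is a bijection $\mc{L}\to\mc{K}$ that is order- and color-preserving and extends $\bar a\mapsto\bar b$, i.e.\ an isomorphism $(\mc{L},\bar a)\cong(\mc{K},\bar b)$.

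The only real content is the extension step. Fix $p\in I$ and $l\in\mc{L}\setminus\mathrm{dom}(p)$, and let $i$ be the color of $l$; by the definition of a color shuffle, $i\in A$. Split the finite set $\mathrm{dom}(p)$ into $L=\{x\in\mathrm{dom}(p):x<l\}$ and $R=\{x\in\mathrm{dom}(p):x>l\}$; since $p$ is order-preserving, every element of $p[L]$ lies strictly below every element of $p[R]$ in $\mc{K}$. I want some $k\in\mc{K}$ with $p[L]<k<p[R]$ and $P_i(k)$. Using that $\mc{K}$ has order type $\eta$ and hence no endpoints, choose a point $c_-$ that is $\max p[L]$ if $L\neq\emptyset$ and otherwise any point below $\min p[R]$ (or, if also $R=\emptyset$, any point at all), and symmetrically a point $c_+>c_-$ that is $\min p[R]$ if $R\neq\emptyset$ and otherwise any point above $\max p[L]$; then density of color $i$ in $\mc{K}$ gives $k$ with $c_-<k<c_+$ and $P_i(k)$. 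A short check shows $k\notin\mathrm{ran}(p)$: any element of $\mathrm{ran}(p)$ has the form $p(x)$ with $x\ne l$, hence lies in $p[L]$ (so is $\le\max p[L]=c_-<k$) or in $p[R]$ (so is $\ge\min p[R]=c_+>k$). Therefore $p\cup\{(l,k)\}\in I$, and the back-step is entirely symmetric, using the hypotheses on $\mc{L}$ instead.

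The only thing requiring care is the bookkeeping for the four cases according to whether $L$ and $R$ are empty, together with the degenerate possibility that $\bar a$ (and hence $p$) is the empty tuple; in each case one replaces the missing endpoint of the relevant gap by appealing to the absence of a least or greatest element in a structure of order type $\eta$. This is where the two defining features of a color shuffle---order type $\eta$ and density of each color in $A$---are used, and I do not expect any obstacle beyond this routine case analysis.
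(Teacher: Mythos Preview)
Your proposal is correct and is essentially the same back-and-forth argument as the paper's proof. The only cosmetic difference is that the paper packages the back-and-forth system as the set of tuple pairs $(\bar c,\bar d)$ with $qftp_{\bar a}(\bar c)=qftp_{\bar b}(\bar d)$ and avoids your endpoint case analysis by adopting the convention $c_0=d_0=-\infty$, $c_{m+1}=d_{m+1}=\infty$, whereas you phrase it via finite partial isomorphisms extending $\bar a\mapsto\bar b$; the extension step in both cases is exactly ``pick a point of the required color in the corresponding open interval, using density.''
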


\begin{proof}
    Consider the following set of tuples:
    \[ B=\{\bar{c},\bar{d}\in\mc{L}^{<\omega}\times\mc{K}^{<\omega} \mid qftp_{\bar{a}}(\bar{c})=qftp_{\bar{b}}(\bar{d})\}.\]
    We claim that this is a back-and-forth set.
    Given $\bar{c},\bar{d}\in B$, and $p\in\mc{L}$ we produce a $q\in\mc{K}$ such that $qftp_{\bar{a}^\frown\bar{c}}(p)=qftp_{\bar{b}^\frown\bar{d}}(q)$.
    By symmetry, this is enough to demonstrate the claim.
    Say that $c_i<p<c_{i+1}$ where $\bar{a}^\frown\bar{c}=c_1<\cdots<c_m$, $c_0=-\infty$ and $c_{m+1}=\infty$.
    If we let $\bar{b}^\frown\bar{d}=d_1<\cdots<d_m$, $d_0=-\infty$ and $d_{m+1}=\infty$, it is enough to show that there is $q$ such that $d_i<q<d_{i+1}$ and with the same color as $p$.
    However, this follows immediately from the fact that every color is dense in $\mc{K}$.
    This means that $B$ is a back and forth set, so $(\mc{L},\bar{a})\cong(\mc{K},\bar{b}).$
\end{proof}

It follows that these colored shuffles are simple in terms of the Scott analysis.

\begin{corollary}\label{cor:ShufflePi2}
	The Scott complexity of $Sh_c(A)$ is $\Pinf{2}$.
\end{corollary}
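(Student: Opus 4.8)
The plan is to show that $Sh_c(A)$ has a $\Pinf{2}$ Scott sentence and that no $\Sinf{2}$ or $\Sinf{1}/\Pinf{1}$ Scott sentence exists, so that $\Pinf{2}$ is indeed the optimal complexity. The upper bound is the main content, and it will follow almost immediately from \cref{lem:shufiso}. A structure has a $\Pinf{2}$ Scott sentence precisely when it is "$\aleph_0$-categorical relative to quantifier-free types" in the appropriate sense: by the Scott analysis machinery (see Montalb\'an's characterization) a countable structure $\A$ has a $\Pinf{2}$ Scott sentence if and only if for every tuple $\bar a$, its automorphism orbit is defined by a quantifier-free formula, equivalently $\A$ is the unique countable model of $\Th_{\forall\exists}(\A)$ together with the obvious axioms; the cleanest route here is to exhibit the sentence directly.

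First I would write down the candidate sentence $\phi$ as the conjunction of: (i) the axioms of a linear order; (ii) the coloring axioms $\forall x \bigvvee_{i\in A}P_i(x)$ and $\forall x\bigwwedge_{i,j}\lnot(P_i(x)\land P_j(x))$, which already restrict the colors used to lie in $A$; and (iii) for each $i\in A$ the density axiom $\forall x\forall y\,(x<y\rightarrow \exists z\,(x<z<y\land P_i(z)))$, together with a sentence expressing that the order has no endpoints (or, if one prefers order type exactly $\eta$, this is automatic from density of at least one color plus no endpoints — but density of every color in $A$ already forces $\eta$ when $|A|\geq 1$). Each conjunct is $\Pinf{2}$: the density and no-endpoints axioms are $\forall\exists$ over quantifier-free matrices, and the coloring axioms are $\Pinf{1}$; a countable conjunction of $\Pinf{2}$ sentences is $\Pinf{2}$. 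Then I would argue that any countable model of $\phi$ is a color shuffle of $A$ and has a point of each color (density forces every color in $A$ to be realized, and no color outside $A$ appears), hence by \cref{lem:shufiso} applied with $\bar a=\bar b=\emptyset$ it is isomorphic to $Sh_c(A)$. So $\phi$ is a $\Pinf{2}$ Scott sentence.

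For the lower bound I would rule out complexity $\Sinf{2}$, $\Pinf{1}$, and $\Sinf{1}$. A $\Sinf{1}$ or $\Pinf{1}$ Scott sentence would make the isomorphism type determined by which existential (resp. which quantifier-free) facts hold of the empty tuple plus finitely many witnesses, but $Sh_c(A)$ has proper substructures and proper extensions that are colored linear orders satisfying the same $\exists$-sentences (e.g. finite colored suborders, or $Sh_c(A)$ with a point removed), so no $\Pinf{1}$ or $\Sinf{1}$ Scott sentence exists. Ruling out $\Sinf{2}$: a $\Sinf{2}$ Scott sentence has the form $\bigvvee_j \exists\bar x\,\psi_j(\bar x)$ with $\psi_j\in\Pinf{1}$, so $Sh_c(A)\models\exists\bar x\,\psi_j(\bar x)$ for some $j$, witnessed by a tuple $\bar a$; then every model of $\exists\bar x\,\psi_j$ would have to be isomorphic to $Sh_c(A)$. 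But $\psi_j(\bar a)$ being $\Pinf{1}$ is preserved under passing to substructures containing $\bar a$, and one can find a proper colored-linear-order substructure of $Sh_c(A)$ containing $\bar a$ that is not isomorphic to $Sh_c(A)$ (for instance an interval of $\bar a$ that is finite, or one where some color of $A$ fails to be dense) — giving a model of $\exists\bar x\,\psi_j$ not isomorphic to $Sh_c(A)$, a contradiction.

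The main obstacle is the $\Sinf{2}$ exclusion: one must be careful that the substructure chosen genuinely satisfies $\psi_j(\bar a)$ — this uses that $\Pinf{1}$ formulas are downward absolute to substructures — and that it is a legitimate $k$-colored linear ordering not isomorphic to $Sh_c(A)$, which is clear since $Sh_c(A)$ is characterized among countable colored linear orders by density of every color in $A$ and no endpoints, a property easy to destroy in a substructure. Alternatively, and perhaps more cleanly, one can invoke the general fact that a structure has a $\Sinf{2}$ Scott sentence iff it is uniformly (boundedly) existentially atomic and "$\exists$-algebraic" — but since $Sh_c(A)$ is infinite and has no algebraic elements over $\emptyset$ yet is not the $\exists$-saturated such structure in a $\Sinf 1$ way, it cannot be $\Sinf 2$; I would present the direct substructure argument as it is self-contained given only the tools in the excerpt.
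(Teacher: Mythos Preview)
Your proposal is correct and reaches the same conclusion, but both halves take a slightly different route from the paper.

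For the upper bound, you write down an explicit $\Pinf{2}$ axiomatization and then invoke \cref{lem:shufiso} (with empty tuples) to conclude categoricity. The paper instead argues internally: by \cref{lem:shufiso} the automorphism orbit of any tuple in $Sh_c(A)$ is determined by its quantifier-free type, hence is $\Sinf{1}$-definable, so the Scott rank is $1$, which by Montalb\'an's characterization is equivalent to having a $\Pinf{2}$ Scott sentence. Both arguments rest on \cref{lem:shufiso}; yours is more concrete, the paper's is shorter and illustrates the rank machinery.

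For the lower bound, the paper simply cites the general fact from Alvir--Greenberg--Harrison-Trainor--Turetsky that no countably infinite structure in a relational vocabulary has a $\Sinf{2}$ Scott sentence. Your downward-absoluteness argument (pass to a finite substructure containing the witnesses, which still satisfies the $\Pinf{1}$ matrix and hence the whole $\Sinf{2}$ disjunction) is exactly the proof of that theorem specialized to this case, so you are reproving rather than citing. Note also that once $\Sinf{2}$ is ruled out, the cases $\Pinf{1}$, $\Sinf{1}$, $\dSinf{1}$ are automatic, since each of these classes is contained in $\Sinf{2}$; your separate treatment of them is redundant. Your version is self-contained; the paper's is a one-line citation.
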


\begin{proof}
    The above proposition shows that the automorphism orbit of a tuple in $Sh_c(A)$ is given by the order of the tuple and the colors of the elements in the tuple.
    This is quantifier-free definable, which means that $\SR(Sh_c(A))=1$.
    In other words, it has a $\Pinf{2}$ Scott sentence.
    As $Sh_c(A)$ is an infinite relational structure, it cannot have a $\Sinf{2}$ Scott sentence.
    (This follows from \cite{AGNHTT} Theorem 5.1.)
    These two facts together yield the desired result.

\end{proof}

\begin{corollary}\label{cor:self-sim}
    Any non-trivial interval of $Sh_c(A)$ without endpoints is isomorphic to $Sh_c(A)$.
\end{corollary}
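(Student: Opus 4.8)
The plan is to show directly that a non-trivial interval $I$ of $Sh_c(A)$ without endpoints satisfies the three conditions implicit in \cref{def:colorshuffle}, i.e., that $I$ is itself a color shuffle of $A$, and then to apply \cref{lem:shufiso} to $I$ and $Sh_c(A)$ with the empty tuples. So the whole corollary should come out as a short consequence of the back-and-forth argument already carried out in \cref{lem:shufiso}.

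First I would check that $I$ has order type $\eta$. Since $Sh_c(A)$ is countable with order type $\eta$, the suborder $I$ is countable and densely ordered, and having no endpoints it has order type $\eta$ by Cantor's characterization of countable dense linear orders without endpoints. (Here one uses the small observation that a non-trivial interval without endpoints is automatically infinite and dense-in-itself: any open interval containing two distinct points of $Sh_c(A)$ already contains infinitely many, and inherits density and the absence of endpoints from the ambient order.) Next, only colors from $A$ occur in $I$, since $I$ is a substructure of $Sh_c(A)$ and only colors from $A$ occur there. Finally, each color $i \in A$ is dense in $I$: given $q_1 < q_2$ in $I$, density of color $i$ in $Sh_c(A)$ yields some $q$ with $q_1 < q < q_2$ and $P_i(q)$, and this $q$ lies in $I$ because $I$ is an interval.

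Having verified these points, $I$ is a color shuffle of $A$, so \cref{lem:shufiso} applied to $\mc L = I$ and $\mc K = Sh_c(A)$ with $\bar a = \bar b$ the empty tuple (which vacuously have the same quantifier-free type) yields $I \cong Sh_c(A)$. I do not expect any real obstacle in this argument; the only step deserving explicit mention is the parenthetical remark above, that "non-trivial interval without endpoints" already forces a dense endpointless suborder, since this is precisely what makes the reduction to \cref{lem:shufiso} immediate rather than needing a separate back-and-forth.
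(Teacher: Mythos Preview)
Your proposal is correct and follows essentially the same approach as the paper: verify that the interval is itself a color shuffle of $A$ (dense order without endpoints in which each color of $A$ is dense) and then invoke \cref{lem:shufiso} with empty tuples. The paper's proof is simply a terser version of exactly this argument.
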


\begin{proof}
 If $X$ is a non-trivial interval of $Sh_c(A)$, then all colors in $A$ are dense in $X$.
 If $X$ has no endpoints, $Sh_c(A)\cong X$ follows from \cref{lem:shufiso}.
\end{proof}

Our first fact points out that, unlike linear orderings, there is a colored linear ordering of every realizable Scott complexity.
Due to the results of \cite{GR23} and \cite{GHTH}, this amounts to showing that there is a colored linear ordering of complexity $\Sinf{3}$.

We will use the (standard asymmetric) back-and-forth relations
to demonstrate this claim.
\begin{definition}
The \textit{standard asymmetric back-and-forth relations} $\leq_\alpha$, for a countable ordinal $\alpha < \omega_1$, are defined by:
    \begin{itemize}
        \item $(\mc{M},\bar{a}) \leq_0 (\mc{N},\bar{b})$ if $\bar{a}$ and $\bar{b}$ satisfy the same quantifier-free formulas from among the first $|\bar{a}|$-many formulas.
        \item For $\alpha > 0$, $(\mc{M},\bar{a}) \leq_\alpha (\mc{N},\bar{b})$ if for each $\beta < \alpha$ and $\bar{d} \in \mc{N}$ there is $\bar{c} \in \mc{M}$ such that $(\mc{N},\bar{b} \bar{d}) \leq_\beta (\mc{M},\bar{a} \bar{c})$.
    \end{itemize}
We define $\bar{a} \equiv_\alpha \bar{b}$ if $\bar{a} \leq_\alpha \bar{b}$ and $\bar{b} \leq_\alpha \bar{a}$.
\end{definition}

These relations are usually conceptualized as a game between the $\forall$-player who moves first and picks a tuple of elements in alternating structures and the $\exists$-player who moves in response to this choice of tuple in the other structure.
The $\exists$-player attempts to copy the behavior exhibited by the $\forall$-player's choices and wins if they can do this until an ordinal clock expires.
The relation $\leq_\alpha$ holds if the $\exists$-player has a winning strategy in this game.
These following theorem connects these relations with infinitary logic.

\begin{theorem}[\cite{Karp}]
For any non-zero ordinal $\alpha$, structures $\mc{M}$ and $\mc{N}$ and tuples $\bar{a}\in\mc{M}$ and $\bar{b}\in\mc{N}$, the following are equivalent:
    \begin{enumerate}
	\item $(\mc{M},\bar{a})\leq_\alpha (\mc{N},\bar{b})$.
	\item Every $\Pinf{\alpha}$ formula true about $\bar{a}$ in $\mc{M}$ is true about $\bar{b}$ in $\mc{N}$.
	\item Every $\Sinf{\alpha}$ formula true about $\bar{b}$ in $\mc{N}$ is true about $\bar{a}$ in $\mc{M}$.
    \end{enumerate} 
\end{theorem}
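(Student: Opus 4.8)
The statement is Karp's classical characterization, and the plan is to prove it by transfinite induction on $\alpha$, after first observing that $(2)\Leftrightarrow(3)$ is pure duality: by pushing negations inward in the definition of the hierarchy, every $\Sinf{\alpha}$ formula is logically equivalent to the negation of a $\Pinf{\alpha}$ formula and conversely, so ``every $\Pinf{\alpha}$ formula true of $\bar{a}$ in $\M$ is true of $\bar{b}$ in $\N$'' is literally the contrapositive of ``every $\Sinf{\alpha}$ formula true of $\bar{b}$ in $\N$ is true of $\bar{a}$ in $\M$''. This reduces matters to $(1)\Leftrightarrow(2)$, which I prove by induction on $\alpha\geq 1$, the base relation $\leq_0$ serving as the ``$\beta=0$ induction hypothesis''; at that level one must do a little bookkeeping with the ``first $|\bar a|$ formulas'' convention, padding the tuples played by the $\forall$-player so that any fixed quantifier-free formula eventually lies in the relevant initial segment.

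For $(1)\Rightarrow(2)$: assume $(\M,\bar{a})\leq_\alpha(\N,\bar{b})$ and let $\phi(\bar{y})$ be $\Pinf{\alpha}$, say logically equivalent to $\bigwwedge_i\forall\bar{x}\,\psi_i(\bar{x},\bar{y})$ with $\psi_i\in\Sinf{\gamma_i}$ and $\gamma_i<\alpha$, and suppose $\M\models\phi(\bar{a})$. To get $\N\models\phi(\bar{b})$ it suffices to fix $i$ and $\bar{d}\in\N$ and produce $\N\models\psi_i(\bar{d},\bar{b})$. Applying the definition of $\leq_\alpha$ with $\beta=\gamma_i$ gives $\bar{c}\in\M$ with $(\N,\bar{b}\bar{d})\leq_{\gamma_i}(\M,\bar{a}\bar{c})$; by the induction hypothesis at level $\gamma_i$ (used in the form $(1)\Leftrightarrow(3)$), every $\Sinf{\gamma_i}$ formula true of $\bar{a}\bar{c}$ in $\M$ is true of $\bar{b}\bar{d}$ in $\N$. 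Since $\M\models\forall\bar{x}\,\psi_i(\bar{x},\bar{a})$ we have $\M\models\psi_i(\bar{c},\bar{a})$, and as $\psi_i\in\Sinf{\gamma_i}$ this transfers to $\N\models\psi_i(\bar{d},\bar{b})$; the degenerate case $\gamma_i=0$ is exactly the base-case reasoning above.

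For $(2)\Rightarrow(1)$, I argue by contradiction using the equivalent form $(3)$. Fix $\beta<\alpha$ and $\bar{d}\in\N$, and suppose that for every $\bar{c}\in\M$ of length $|\bar{d}|$ we have $(\N,\bar{b}\bar{d})\not\leq_\beta(\M,\bar{a}\bar{c})$. By the induction hypothesis at level $\beta$, each such failure yields a $\Pinf{\beta}$ formula $\theta_{\bar{c}}(\bar{y},\bar{x})$ with $\N\models\theta_{\bar{c}}(\bar{b},\bar{d})$ but $\M\not\models\theta_{\bar{c}}(\bar{a},\bar{c})$. Since $\M$ is countable, $\theta(\bar{y},\bar{x}):=\bigwwedge_{\bar{c}}\theta_{\bar{c}}(\bar{y},\bar{x})$ is a legitimate $\Lomom$ formula, and using closure of $\Pinf{\beta}$ under countable conjunctions (for $\beta\geq 1$; the finitary case $\beta=0$ is again absorbed into the base case) together with $\beta<\alpha$, the formula $\psi(\bar{y}):=\exists\bar{x}\,\theta(\bar{y},\bar{x})$ is $\Sinf{\alpha}$. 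Now $\bar{d}$ witnesses $\N\models\psi(\bar{b})$, so by $(3)$ we get $\M\models\psi(\bar{a})$; any witness $\bar{c}_0$ then satisfies $\M\models\theta_{\bar{c}_0}(\bar{a},\bar{c}_0)$, contradicting the choice of $\theta_{\bar{c}_0}$. Hence some $\bar{c}$ works, and as $\beta$ and $\bar{d}$ were arbitrary, $(\M,\bar{a})\leq_\alpha(\N,\bar{b})$.

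I expect the only real friction to be notational rather than conceptual: meshing the base relation $\leq_0$ and its initial-segment convention with arbitrary quantifier-free $\psi_i$, and verifying that the assembled formulas land in \emph{exactly} $\Sinf{\alpha}$ (respectively $\Pinf{\alpha}$) and not one quantifier-alternation level too high. The rest is standard back-and-forth bookkeeping, and the argument only uses countability of the structures, which is all this paper requires.
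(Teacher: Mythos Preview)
The paper does not prove this theorem: it is stated with a citation to \cite{Karp} and used as a black box, so there is no ``paper's own proof'' to compare against. Your argument is the standard one and is essentially correct for the countable setting the paper works in.

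Two small points worth tightening. First, the $\beta=0$ case in the $(2)\Rightarrow(1)$ direction is not just bookkeeping to be ``absorbed'': the infinite conjunction $\bigwwedge_{\bar c}\theta_{\bar c}$ of finitary quantifier-free formulas is genuinely \emph{not} $\Pinf{0}$, so $\exists\bar x\bigwwedge_{\bar c}\theta_{\bar c}$ is not $\Sinf{1}$ on the nose. The clean fix is that for $\beta=0$ you do not need an infinite conjunction at all: the relation $\leq_0$ only asks for agreement on the first $|\bar a\bar c|$ quantifier-free formulas, so a single \emph{finite} conjunction recording the truth values of those formulas on $\bar b\bar d$ suffices, and its existential closure is honestly $\Sinf{1}$. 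Second, your use of countability of $\M$ to keep $\bigwwedge_{\bar c}\theta_{\bar c}$ inside $\Lomom$ is necessary for the argument as written; Karp's original result is for $L_{\infty,\omega}$, where this restriction disappears. Since every structure in this paper is countable, your version is exactly what is needed here.
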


We denote intervals within a colored linear ordering as follows.
\begin{definition}
    If $\mc{L}\models LO_k$ with $2\leq k\leq\aleph_0$ colors and $a,b\in\mc{L}$ let
    \[(a,b)_\mc{L}=\{x\in\mc{L}|a<x<b\}.\]
    We also let $\mc{L}_{>x}=(x,\infty)_\mc{L}$ and $\mc{L}_{<x}=(-\infty,x)_\mc{L}$
    If it is clear from the context what ordering $a$ and $b$ are elements of, the subscript $\mc{L}$ may be dropped.

\end{definition}

The back-and-forth relations for colored linear orderings obey the following rules that make them easier to calculate.

\begin{lemma}\label{lem:colorHelper}
    Let $LO_k$ be the theory of linear ordering with $2\leq k\leq\aleph_0$ colors. Let $\mc{L},\mc{K}\models LO_k$. Let $\bar{a}=a_1<\cdots<a_n\in\mc{L}$ and $\bar{b}=b_1<\cdots<b_n\in\mc{K}$ and by convention let $a_0=b_0=-\infty$ and $a_{n+1}=b_{n+1}=\infty$.
    \begin{enumerate}
        \item $(\mc{L},\bar{a})\leq_\alpha (\mc{K},\bar{b})$ if and only if for each $i\leq n$, $a_i$ has the same color as $b_i$ and $(a_i,a_{i+1})_{\mc{L}}\leq_\alpha (b_i,b_{i+1})_{\mc{K}}$.
        \item $\mc{L}\leq_1\mc{K}$ if and only if every finite, increasing sequence of colors in $\mc{K}$ is also present in $\mc{L}$.
    \end{enumerate}
    
\end{lemma}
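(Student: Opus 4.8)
The plan is to prove (1) by induction on $\alpha$, using the structural observation that any colored linear order is the ordered sum of the (possibly empty) intervals cut out by an increasing tuple, and that the back-and-forth relations behave well with respect to such sums; statement (2) I would then prove directly by unwinding the definition of $\leq_1$ all the way down to quantifier-free types, without reference to (1).

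For the base case $\alpha=0$ of (1): $(\mc L,\bar a)\leq_0(\mc K,\bar b)$ says that $\bar a$ and $\bar b$ have the same quantifier-free type, which for increasing tuples in a colored linear order means precisely that $a_i$ and $b_i$ have the same color for every $i\le n$; the interval condition is vacuous at level $0$, so the equivalence holds. For the inductive step I assume the equivalence at all $\beta<\alpha$. For ($\Leftarrow$): assuming the colors match and $(a_i,a_{i+1})_{\mc L}\leq_\alpha(b_i,b_{i+1})_{\mc K}$ for all $i$, I fix $\beta<\alpha$ and a play $\bar d\in\mc K$, split $\bar d$ into the subtuples $\bar d^{(i)}$ lying in each interval $(b_i,b_{i+1})_{\mc K}$ (entries equal to some $b_i$ can be discarded, being matched automatically), use $(a_i,a_{i+1})_{\mc L}\leq_\alpha(b_i,b_{i+1})_{\mc K}$ to find responses $\bar c^{(i)}\in(a_i,a_{i+1})_{\mc L}$ with $\bigl((b_i,b_{i+1})_{\mc K},\bar d^{(i)}\bigr)\leq_\beta\bigl((a_i,a_{i+1})_{\mc L},\bar c^{(i)}\bigr)$, and reassemble the $\bar c^{(i)}$ into a single tuple $\bar c$. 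The key point is that $\bar a\bar c$ cuts $\mc L$ into exactly the intervals that the $\bar c^{(i)}$ cut out of the $(a_i,a_{i+1})_{\mc L}$, and symmetrically for $\bar b\bar d$ in $\mc K$; so the induction hypothesis at level $\beta$ (forward direction), applied to each relation $\bigl((b_i,b_{i+1})_{\mc K},\bar d^{(i)}\bigr)\leq_\beta\bigl((a_i,a_{i+1})_{\mc L},\bar c^{(i)}\bigr)$, together with the induction hypothesis at level $\beta$ (backward direction) applied to the pair $(\mc K,\bar b\bar d)$ against $(\mc L,\bar a\bar c)$, yields $(\mc K,\bar b\bar d)\leq_\beta(\mc L,\bar a\bar c)$, as required. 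For ($\Rightarrow$): from $(\mc L,\bar a)\leq_\alpha(\mc K,\bar b)$ the colors match (via the $\beta=0$ instance, equivalently by monotonicity of the relations), and to obtain $(a_i,a_{i+1})_{\mc L}\leq_\alpha(b_i,b_{i+1})_{\mc K}$ I fix $\beta<\alpha$, feed a play $\bar d^{(i)}\subseteq(b_i,b_{i+1})_{\mc K}$ into $(\mc L,\bar a)\leq_\alpha(\mc K,\bar b)$, obtain a response $\bar c\in\mc L$ with $(\mc K,\bar b\bar d^{(i)})\leq_\beta(\mc L,\bar a\bar c)$, note that $\bar c$ is forced to lie inside $(a_i,a_{i+1})_{\mc L}$ because its order relations with $\bar a$ copy those of $\bar d^{(i)}$ with $\bar b$, and then extract $\bigl((b_i,b_{i+1})_{\mc K},\bar d^{(i)}\bigr)\leq_\beta\bigl((a_i,a_{i+1})_{\mc L},\bar c\bigr)$ from the induction hypothesis.

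For (2): one direction is immediate from the definition of $\leq_1$ with $\beta=0$ --- if $i_1,\dots,i_m$ is the color sequence of an increasing tuple $d_1<\dots<d_m$ in $\mc K$, then a $\leq_0$-response $\bar c\in\mc L$ to $(d_1,\dots,d_m)$ is forced to be an increasing tuple realizing the colors $i_1,\dots,i_m$, so the sequence is present in $\mc L$. Conversely, given an arbitrary play $\bar d\in\mc K$, I read off the colors $i_1,\dots,i_m$ of its distinct entries listed in increasing order, use the hypothesis to find $c_1<\dots<c_m$ in $\mc L$ realizing those colors, and reassemble them into a tuple with the same order-and-repetition pattern as $\bar d$; this tuple is $\leq_0$-equivalent to $\bar d$, witnessing $\mc L\leq_1\mc K$.

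The only point requiring real care is the bookkeeping in the inductive step of (1): I must verify that the interval decomposition of the assembled tuple $\bar a\bar c$ (respectively $\bar b\bar d$) is exactly the concatenation of the interval decompositions induced by the $\bar c^{(i)}$ inside the $(a_i,a_{i+1})_{\mc L}$ (respectively the $\bar d^{(i)}$ inside the $(b_i,b_{i+1})_{\mc K}$), and that colors line up across the two sides, so that the induction hypothesis can be invoked uniformly. This is routine --- inserting points into a linear order only subdivides existing intervals, never merging or reordering them --- and I do not expect any substantive obstacle beyond keeping the indices straight.
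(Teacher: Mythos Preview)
Your argument is correct and, for the backward direction of (1) and for (2), essentially matches the paper's proof. The one genuine difference is in the forward direction of (1): the paper does not argue game-theoretically inside the induction, but instead invokes Karp's theorem directly. If $(a_i,a_{i+1})_{\mc L}\not\leq_\alpha(b_i,b_{i+1})_{\mc K}$, there is a $\Pinf{\alpha}$ formula $\psi$ witnessing this, and relativizing the quantifiers of $\psi$ to the interval between the $i$th and $(i{+}1)$st parameters yields a $\Pinf{\alpha}$ formula $\hat\psi$ witnessing $(\mc L,\bar a)\not\leq_\alpha(\mc K,\bar b)$. This disposes of the forward direction in one stroke, with no induction and no need to worry about whether a response tuple $\bar c$ lands in the correct interval (the one place in your argument that requires a little care at level $\beta=0$, handled e.g.\ by padding the played tuple). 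Your approach has the virtue of uniformity---both directions live inside a single transfinite induction and use only the game definition---while the paper's approach trades that uniformity for a shorter forward direction that leans on the formula characterization.
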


\begin{proof}
We begin by showing the forward direction of Item (1).
Say that for some $i$, $(a_i,a_{i+1})_{\mc{L}}\not\leq_\alpha (b_i,b_{i+1})_{\mc{K}}$ witnessed by some formula $\psi$.
Relativizing the quantifiers in $\psi$ to being between the $i^{th}$ and $(i+1)^{st}$ elements of the tuple of parameters directly gives a $\hat{\psi}$ that witnesses $(\mc{L},\bar{a})\not\leq_{\alpha} (\mc{K},\bar{b})$.
Also, if $a_i$ and $b_i$ are different colors, $(\mc{L},\bar{a})\not\leq_0 (\mc{K},\bar{b})$ follows immediately.

The backward direction of Item (1) is a transfinite induction with the only interesting case being the successor case.
Say that $a_i$ has the same color as $b_i$ and $(a_i,a_{i+1})_{\mc{L}}\leq_{\beta+1} (b_i,b_{i+1})_{\mc{K}}$ for all $i\leq n$.
Consider $\bar d$ with $\bar d_i\subseteq \bar d$ in $(b_i,b_{i+1})$. Then there is $\bar c_i\in (a_i,a_{i+1})$ such that $(b_i,b_{i+1},\bar d_i)\leq_\beta ( a_i, a_{i+1},\bar c_i)$. 
For $i\leq n$ let $\bar{c}_i$ be the elements of $\bar{c}$ in $(b_i,b_{i+1})$.
Writing $\bar{a},\bar{c}=\bigcup \bar c_i$ as an increasing sequence $\bar{p}$ and $\bar{b},\bar{c}=\bigcup \bar d_i$ as an increasing sequence $\bar{q}$, the induction hypothesis gives that
\[(\mc{L},\bar{a},\bar{c})=(\mc{L},\bar{p})\geq_\beta(\mc{K},\bar{q})=(\mc{K},\bar{b},\bar{d}),\]
as desired.

Item (2) follows directly by observing that the quantifier-free type of a tuple is entirely determined by its ordering and the colors of the elements.
\end{proof}

\begin{theorem}\label{thm:allComplexities}
	There is a $2$-colored linear ordering that has Scott complexity $\Sinf{3}$.
    In particular, every Scott complexity is realizable by $k$-colored linear orderings for $k\geq 2$.
\end{theorem}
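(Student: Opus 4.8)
I would exhibit one explicit $2$-colored linear ordering and compute its Scott complexity by hand, using the back-and-forth relations and \cref{lem:colorHelper}. A good candidate is
\[
\mc{L} \;=\; Sh_c(\{0,1\}) \;+\; \{a\}_0 \;+\; Sh_c(\{0\}),
\]
i.e.\ a two-colored shuffle, followed by a single point $a$ of color $0$, followed by a monochromatic shuffle of color $0$; note $\mc{L}$ is genuinely not a color shuffle, since color $1$ fails to be dense in the final segment $(a,\infty)$. The plan is to show (i) $\mc{L}$ has a $\Sinf{3}$ Scott sentence, (ii) $\mc{L}$ has no $\Pinf{3}$ Scott sentence, and then combine these with the fact that an infinite relational structure has no $\Sinf{2}$ Scott sentence (\cite{AGNHTT} Theorem 5.1, exactly as used in \cref{cor:ShufflePi2}); by the classification in \cite{AGNHTT} the only remaining possible value of $SSC(\mc{L})$ is $\Sinf{3}$.

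\textbf{Step 1: the $\Sinf{3}$ Scott sentence.} Name $a$ as a parameter. Then $(\mc{L},a)$ splits at $a$ into the three intervals $Sh_c(\{0,1\})$, $\{a\}$, $Sh_c(\{0\})$, so it is a finite sum of color shuffles and a singleton. By \cref{lem:shufiso} (and \cref{cor:self-sim,cor:ShufflePi2}) each shuffle piece is back-and-forth homogeneous over its endpoints, so every automorphism orbit of $(\mc{L},a)$ is determined by the color of a point together with which of the two intervals it lies in (or whether it equals $a$) --- all quantifier-free over $a$. Hence $\SR(\mc{L},a)=1$, i.e.\ $(\mc{L},a)$ has a $\Pinf{2}$ Scott sentence, so by Montalbán's theorem on parametrized Scott rank quoted in the introduction \cite{MonSR}, $\mc{L}$ has parametrized Scott rank $\le 1$ and therefore a $\Sinf{3}$ Scott sentence; since $\Sinf{3}\subseteq\Pinf{4}$ this also shows $\SR(\mc{L})\le 3$.

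\textbf{Step 2: no $\Pinf{3}$ Scott sentence.} Since $\SR(\mc{L})\le\alpha$ iff $\mc{L}$ has a $\Pinf{\alpha+1}$ Scott sentence, it suffices to show $\SR(\mc{L})\ge 3$, i.e.\ that some automorphism orbit is not $\Sinf{2}$-definable. The orbit of $a$ is $\{a\}$, and $a$ is characterized by being the least point of color $0$ lying above every point of color $1$ --- a $\Pinf{2}$ condition; but I claim it is not supported by any $\Sinf{2}$ formula. By the back-and-forth characterization of Scott rank, this amounts to producing a point $y$ with $y\le_2 a$ in $\mc{L}$ and $y\ne a$. Take $y$ to be any point of color $0$ in the final piece $Sh_c(\{0\})$. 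Using \cref{lem:colorHelper}, the $\le_2$-game from $(\mc{L},y)$ to $(\mc{L},a)$ reduces to matching the colored intervals flanking $y$ and $a$; because below both $y$ and $a$ color $1$ occurs cofinally and above both it is absent, and because the finitary relation $\le_0$ cannot detect the density difference between ``color $1$ is dense below'' (true at $a$) and ``color $1$ is eventually absent below'' (true at $y$), the $\exists$-player can respond to every move of the $\forall$-player, including moves placing points close to $a$, where the single distinguished point $a$ sitting inside the interval $(p,y)$ is matched by an arbitrary color-$0$ point of the corresponding shuffle interval $(p,a)$. Thus $y\le_2 a$ with $y\ne a$, so the orbit of $a$ is not $\Sinf{2}$-definable, $\SR(\mc{L})\ge 3$, and $\mc{L}$ has no $\Pinf{3}$ Scott sentence. (Equivalently one can package this as an explicit non-isomorphic $\mc{K}$ with $\mc{L}\le_2\mc{K}$, e.g.\ by repeating the ``$\{a\}_0 + Sh_c(\{0\})$'' block.)

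\textbf{Step 3: conclusion, and the ``in particular''.} Steps 1--2 give $\SR(\mc{L})=3$; in particular $\mc{L}$ has no $\dSinf{2}$ and no $\Pinf{3}$ Scott sentence (as $\dSinf{2}\subseteq\Pinf{3}$), and no $\Sinf{2}$ Scott sentence since it is infinite and relational. Every Scott complexity strictly below $\Sinf{3}$ is contained in $\Pinf{3}$, so by \cite{AGNHTT} the least correct complexity for $\mc{L}$ is exactly $\Sinf{3}$. For the final sentence of the theorem, one invokes the reductions of \cite{GR23} and \cite{GHTH} recalled just before the statement: once a colored linear ordering of complexity $\Sinf{3}$ exists, all realizable Scott complexities are realized within the class of $k$-colored linear orderings for $k\ge 2$. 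The main obstacle in the whole argument is Step 2: verifying $y\le_2 a$ must be done carefully through \cref{lem:colorHelper}, checking that the $\exists$-player really survives all ``deep'' $\forall$-moves near $a$, and one should also confirm that every orbit of $\mc{L}$ is $\Sinf{3}$-definable (so that $\SR(\mc{L})$ is exactly $3$ rather than larger), which follows by locating each orbit relative to the $\Pinf{2}$-definable point $a$.
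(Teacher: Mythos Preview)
Your example is the mirror image of the paper's ($\eta_0+1_0+Sh_c(0,1)$ versus your $Sh_c(0,1)+1_0+\eta_0$), and your three-step outline---parametrized rank $1$ over the distinguished point, then showing that point has no $\Sinf{2}$-definable orbit---is exactly the paper's argument with left and right swapped.

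One slip in Step~2: you write ``below both $y$ and $a$ color $1$ occurs cofinally,'' but this is false for your $y$ in the final $Sh_c(\{0\})$ block---color $1$ is bounded above by $a<y$, so it is \emph{not} cofinal below $y$ (indeed you yourself say two lines later that color $1$ is ``eventually absent below'' $y$). This does not break the argument: what you actually need is $\mc{L}_{<y}\leq_2\mc{L}_{<a}$, i.e.\ $Sh_c(0,1)+1_0+\eta_0\leq_2 Sh_c(0,1)$, and that holds by letting the $\exists$-player copy along the initial embedding and then checking, via \cref{lem:colorHelper}(2), that $Sh_c(0,1)\leq_1 Sh_c(0,1)+1_0+\eta_0$ since $Sh_c(0,1)$ realizes every finite color sequence. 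Once you replace the informal ``cofinally'' remark with this computation, your Step~2 goes through verbatim as in the paper.
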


\begin{proof}
    Consider the linear ordering $\mc{L}=\eta_0+1_0+Sh_c(0,1)$.
    We claim that $\mc{L}$ is the desired order.
    
    We first show that $\mc{L}$ has a $\Sinf{3}$ Scott sentence.
    For this, it is enough to show that $\SR_p(\mc{L})=1$.
    Take the parameter $p$ to be the "$1_0$" listed in the sum.
    Automorphisms of $(\mc{L},p)$ fix the elements below and above $p$ setwise, so it is enough to define the automorphism orbits of tuples above and below $p$.
		However, we know that this can be done using $\Sinf{1}$-formulas by \cref{cor:ShufflePi2} and the fact that $\eta_0=Sh(0)$.
    Therefore, $\mc{L}$ has a $\Sinf{3}$ Scott sentence.

    We now show that this is the least complex Scott sentence for $\mc L$, or, in other terms, that $\SR(\mc{L})=3$.
		For this, it is enough to show that the element $p$ defined above does not have a $\Sinf{2}$-definable automorphism orbit. 
    To show this, we demonstrate that any $q\in \eta_0$ has the property that $q\leq_2 p$.
    As $p$ and $q$ are the same color, we need only show that $\mc{L}_{<q}\leq_2\mc{L}_{<p}$ and that $\mc{L}_{>q}\leq_2\mc{L}_{>p}$.
    Note that $\mc{L}_{<q}\cong\eta_0\cong\mc{L}_{<p}$, so it is enough to show that
    \[\mc{L}_{>q}\cong \mc{L} \leq_2 Sh_c(0,1) \cong \mc{L}_{>p},\]
    where the written isomorphisms follow from \cref{cor:self-sim}.
    Let $\iota:Sh_c(0,1)\to \mc{L}$ be the final embedding implicit in writing $\mc{L}=\eta_0+1_0+Sh_c(0,1)$.
    Given a play of $\bar{a}\in Sh_c(0,1)$ in the back-and-forth game, the $\exists$-player can respond with $\iota(\bar{a})$.
    Write $a_0=-\infty<a_1<\cdots<a_n<a_{n+1}=\infty$.
    We need to show for all $i<n$ that $(\iota(a_i),\iota(a_{i+1}))\geq_1 (a_i,a_{i+1}).$
    It follows from \cref{cor:self-sim} that for $i>0$ $(\iota(a_i),\iota(a_{i+1}))\cong Sh_c(0,1)\cong (a_i,a_{i+1}).$
    Therefore, we need only show that 
    \[\mc{L}_{<\iota(a_1)}\cong \mc{L} \geq_1 Sh_c(0,1) \cong Sh_c(0,1)_{<a_1},\]
    where the written isomorphisms again follow from \cref{cor:self-sim}.
    Observe that every increasing sequence of the colors $0$ and $1$ can be found in $Sh_c(0,1)$ because each color is dense.
    By \cref{lem:colorHelper}, this demonstrates that  $\mc{L} \geq_1 Sh_c(0,1)$, as desired.
    Putting it all together, we get that $q\leq_2 p$, so $\SR(\mc{L})=3$. Combining this with the fact proven above that $\SR_p(\mc{L})=1$ gives that the Scott complexity of $\mc{L}$ is $\Sinf{3}$.

    The second sentence of the theorem now follows from the fact that there are uncolored linear orderings of every other possible Scott complexity by Table 2 of \cite{GR23} along with Theorem 1.3 of \cite{GHTH}.
    These orderings can be effectively bi-interpreted into linear orderings with $k$ colors via the map $\mc{L}\mapsto \mc{L}_0$.
\end{proof}

\cref{thm:allComplexities} means that, in terms of Scott analysis, being able to interpret colored linear orderings is more powerful than being able to interpret linear orderings without colors.
There are still some limits on the power of such an interpretation that come from limits on the simple colored linear orderings.
For example, (as we see below) there are only countably many colored linear orderings with Scott complexity $\Pinf{2}$ when there are finitely many colors, and they all have a very particular form.
We will now calculate the complexity counting function for the theory of linear orderings and finitely colored linear orderings, which will make these limits explicit.
We will use the following notion in our analysis (see \cite[Definition 1.8]{GR23}).

\begin{definition}[\cite{GR23}]
    Given a structure $\mc{A}$ and limit ordinal $\lambda$, a \define{$\lambda$-sequence} in $\mc{A}$ is a sequence of tuples $(\bar{y}_i\in\mc{A})_{i
			\in
		\omega}$ such that for some increasing fundamental sequence $(\alpha_i)_{i\in\omega}$ for $\lambda$ we have
    \[\bar{y}_i \equiv_{\alpha_i} \bar{y}_{i+1}. \]
    The $\lambda$-sequence is called \define{unstable} if, in addition
    \[\bar{y}_i \not\equiv_{\alpha_{i+1}} \bar{y}_{i+1}.\]

\end{definition}

The following theorem is not explicitly stated elsewhere, but isolates the previously known importance of this notion.
\begin{theorem}\label{thm:internalSC}
Fix a structure $\mc{M}$.
The Scott complexity of $\mc{M}$ is determined by
    \begin{enumerate}
        \item $\SR(\M)$
        \item $\SR_p(\M)$
        \item the existence (or non-existence) of unstable $\lambda$-sequences in $\mc{M}$
        \item the existence (or non-existence) of unstable $\lambda$-sequences over parameters in $\mc{M}$.
    \end{enumerate}
\end{theorem}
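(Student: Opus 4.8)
The plan is to show that $SSC(\M)$ is computed from the four quantities by a case analysis, each case being an instance of a result already in the literature. Write $\beta=\SR(\M)$. By~\cite{AGNHTT}, $SSC(\M)$ is the $\subseteq$-least class among $\{\Sinf\alpha,\Pinf\alpha,\dSinf\alpha\}_{\alpha<\omega_1}$ containing a Scott sentence of $\M$, so it suffices to decide, for each such class, whether $\M$ has a Scott sentence in it --- and only a handful are in play. By the very definition of Scott rank, $\M$ has a $\Pinf{\beta+1}$ Scott sentence, so $SSC(\M)\subseteq\Pinf{\beta+1}$, but $\M$ has no $\Pinf{\beta'+1}$ Scott sentence for $\beta'<\beta$, so it has no Scott sentence of level below $\beta$ (such a sentence would lie in some $\Pinf{\beta'+1}$). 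Combined with the inclusions $\Sinf\alpha,\Pinf\alpha\subseteq\dSinf\alpha\subseteq\Sinf{\alpha+1}\cap\Pinf{\alpha+1}$, this forces $SSC(\M)\in\{\Pinf\beta,\Sinf\beta,\dSinf\beta,\Pinf{\beta+1}\}$, with $\Pinf\beta$ excluded when $\beta$ is a successor. The task is thus to decide which of (at most) four candidates is $SSC(\M)$.

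I would split this according to whether $\beta$ is a successor or a limit. If $\beta$ is a successor, I would appeal to~\cite{MBook}: there the interaction of $\SR$, $\SR_p$ with the existence of $\Sinf{\alpha+1}$-, $\dSinf\alpha$- and $\Pinf\alpha$-Scott sentences is worked out, and because at a successor level the auxiliary questions it reduces to --- ``does some $(\M,\bar a)$ have a $\Pinf\gamma$ Scott sentence?'' with $\beta=\gamma+1$ (for the $\Sinf\beta$ case), and the companion orbit-definability question (for the $\dSinf\beta$ case) --- again live one level down, they collapse to plain assertions about $\SR_p(\M)$. So in the successor case $SSC(\M)$ is a function of $\SR(\M)$ and $\SR_p(\M)$ alone, with no unstable sequences needed. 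If $\beta=\lambda$ is a limit, I would appeal to~\cite{GR23}: their analysis determines, for a structure of Scott rank $\lambda$, which of $\Pinf\lambda,\Sinf\lambda,\dSinf\lambda$ is its Scott complexity --- and whether it instead jumps to $\Pinf{\lambda+1}$ --- from $\SR$, $\SR_p$, and the presence or absence of unstable $\lambda$-sequences; for the parametrized refinements this is applied to $(\M,\bar a)$, in which an unstable $\lambda$-sequence is exactly an unstable $\lambda$-sequence over a parameter of $\M$, i.e.\ item~(4). Combining the two cases, each of the four candidate conditions becomes a Boolean combination of statements about (1)--(4), and reading off the $\subseteq$-least candidate that holds computes $SSC(\M)$.

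The step I expect to be the main obstacle is the bookkeeping that glues these pieces together, and in particular confirming that item~(4) --- unstable $\lambda$-sequences over a single finite tuple --- is \emph{exactly} what is needed, with no deeper nesting of parameters. The recursion is introduced by the equivalence ``$\M$ has a $\Sinf{\alpha+1}$ Scott sentence $\iff$ some $(\M,\bar a)$ has a $\Pinf\alpha$ Scott sentence'': a priori one might be pushed from $(\M,\bar a)$ to $(\M,\bar a,\bar b)$ and onward. What stops this is that a Scott sentence of $(\M,\bar a)$ that is itself parametrized collapses back to one of $(\M,\bar a)$ by absorbing the extra parameters into $\bar a$, so exactly one unfolding suffices before the question becomes either a pure $\SR/\SR_p$ condition (successor case) or a limit-level condition resolved by~\cite{GR23} via unstable $\lambda$-sequences of $(\M,\bar a)$. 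Verifying this collapse and the low-complexity boundary cases (small $\beta$, and finite $\M$) against the classifications in~\cite{MBook} and~\cite{GR23} is the bulk of the work; conceptually the theorem is just their common refinement, stated uniformly.
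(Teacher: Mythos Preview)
Your case split is on the wrong invariant. You split on whether $\beta=\SR(\M)$ is a successor or a limit, and claim that in the successor case items~(1) and~(2) suffice. This fails precisely when $\beta=\lambda+1$ with $\lambda$ a limit and $\SR_p(\M)=\lambda$. In that situation the candidate complexities are $\Sinf{\lambda+1}$ and $\dSinf{\lambda+1}$, and distinguishing them comes down to whether some $(\M,\bar a)$ with $\SR(\M,\bar a)=\lambda$ has a $\Pinf{\lambda}$ Scott sentence --- which by~\cite{GR23} is governed by unstable $\lambda$-sequences in $(\M,\bar a)$, i.e.\ item~(4). So your assertion that ``in the successor case $SSC(\M)$ is a function of $\SR(\M)$ and $\SR_p(\M)$ alone, with no unstable sequences needed'' is false, and your proof has a genuine gap there. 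Conversely, in your limit case $\SR(\M)=\lambda$ one is forced to have $\SR_p(\M)=\lambda$ as well (since $\SR_p(\M)\leq\SR(\M)\leq\SR_p(\M)+2$ and $\lambda$ is a limit), and then the only two candidates are $\Pinf{\lambda}$ and $\Pinf{\lambda+1}$, decided by item~(3) alone; item~(4) plays no role there, contrary to what your ``parametrized refinements'' remark suggests.

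The fix is exactly what the paper does: split on whether $\SR_p(\M)$ is a successor or a limit. When $\SR_p(\M)$ is a successor, the auxiliary question ``does some $(\M,\bar a)$ have a $\Pinf{\gamma}$ Scott sentence?'' always lands at a successor level $\gamma$ and genuinely collapses to $\SR_p(\M)$, so~(1) and~(2) suffice. When $\SR_p(\M)=\lambda$ is a limit, there are three subcases $\SR(\M)\in\{\lambda,\lambda+1,\lambda+2\}$: the first is resolved by~(3), the second by~(4), and the third forces $SSC(\M)=\Sinf{\lambda+2}$ without any appeal to unstable sequences.
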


\begin{proof}
    It follows from the analysis in \cite{AGNHTT} (see for example \cite[Table 1]{MBook}) that if $\SR_p(\M)$ is a successor ordinal then the Scott complexity of $\mc{M}$ is determined by just $\SR(\M)$ and $\SR_p(\M)$.
    The case where $\SR_p(\M)$ is a limit ordinal $\lambda$ is examined in~\cite[Lemma 1.9]{GR23}.
    If $\SR(\M)=\SR_p(\M)=\lambda$ the two possible Scott complexities of $\Pi_\lambda$ and $\Pi_{\lambda+1}$ are distinguished by the existence of unstable $\lambda$-sequences in $\mc{M}$.
    If $\SR_p(\M)=\lambda$ and $\SR(\M)=\lambda+1$ the two possible Scott complexities of $\Si_{\lambda+1}$ and $\dSi_{\lambda+1}$ are distinguished by the existence of unstable $\lambda$-sequences over parameters in $\mc{M}$.
    The final possibility that $\SR_p(\M)=\lambda$ and $\SR(\M)=\lambda+2$ gives that the Scott complexity of $\M$ is $\Si_{\lambda+2}$, which follows from the analysis in \cite{AGNHTT}, and does not need the notion of unstable $\lambda$-sequences to be understood.
\end{proof}

We will now show how to combine various results from recent literature to obtain the complexity counting function for the class of linear orderings.
\begin{theorem}\label{thm:nocolors}
    Let $LO$ be the theory of linear orderings. Then
        \begin{enumerate}
        \item $I(LO,\Pinf{i}) = 1$ if $i\leq 2$,
        \item $I(LO,\Sinf{i})=0$ if $i\leq 3$,
	\item $I(LO,\text{d-}\Sinf{i}) = \aleph_0$ if $i\leq 3$,
        \item $I(LO,\Pinf{3})=\aleph_0$,
        \item $I(LO,\Sinf{4})=\aleph_0$,
	\item $I(LO_k,\Gamma) = 2^{\aleph_0}$ otherwise.
    \end{enumerate}
\end{theorem}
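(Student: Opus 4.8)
The strategy is to analyze, in increasing order of complexity, exactly which Scott complexities occur among linear orderings and to count the models of each kind. The key tool is \cref{thm:internalSC}: once we know the possible values of $\SR$ and $\SR_p$, plus the presence or absence of (parametrized) unstable $\lambda$-sequences, the Scott complexity is pinned down. So I would first recall from the literature (the cited \cite{GR23} and \cite{GHTH}) what the possible Scott complexities of linear orderings are. The upshot — and this is where items (1)--(5) come from — is that the only ``low'' complexities realized by infinite linear orderings are $\Pinf{2}$ (realized uniquely), $\dSinf{i}$ for $i\le 3$ and $\Pinf{3}$ (each realized by $\aleph_0$ many), and $\Sinf{4}$ (realized by $\aleph_0$ many); no linear ordering has complexity $\Sinf{i}$ for $i\le 3$, nor any finite complexity other than $\Pinf{2}$. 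Everything else that is realizable at all (by structures in general) is realized by $2^{\aleph_0}$ many linear orderings, which is item (6).

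For the concrete small cases I would argue as follows. For (1): the only linear ordering with a $\Pinf{1}$ or $\Sinf{1}$ Scott sentence would have to be finite of a fixed size, and among infinite linear orderings only $\eta$ has Scott complexity $\Pinf{2}$ — indeed $\eta = Sh(\emptyset)$ (or, in our one-colored language, $Sh_c(\{0\})$), which has a $\Pinf{2}$ Scott sentence by the argument of \cref{cor:ShufflePi2}, while any other infinite linear ordering has some tuple whose automorphism orbit is not quantifier-free definable, forcing $\SR\ge 2$. (One also uses that an infinite relational structure cannot have a $\Sinf{2}$ Scott sentence, by \cite[Theorem 5.1]{AGNHTT}, which kills $\Sinf{2}$ and hence gives (2) for $i\le 2$; the case $i=3$ of (2), i.e. no linear ordering has complexity $\Sinf{3}$, is the genuinely linear-order-specific fact I would cite from \cite{GR23}.) For (3), (4), (5): one exhibits explicit countable families — e.g. finite sums and simple combinations like $\omega$, $\omega^*$, $\zeta$, $\omega\cdot n + \eta$, $\eta + 1 + \eta$, and the analogous finite-data variants — realizing $\dSinf{2}$, $\dSinf{3}$, $\Pinf{3}$, $\Sinf{4}$, together with the observation that each such complexity is realized by only countably many isomorphism types of linear orderings. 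The ``only countably many'' direction is the delicate half: one argues that a linear ordering of one of these low complexities is built from finitely much data over a countable back-and-forth hierarchy that stabilizes quickly, so there are at most $\aleph_0$ of them; this is essentially extracted from the structural descriptions in \cite{GR23}.

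For item (6), the lower bound $2^{\aleph_0}$ is the substantive content. Given a realizable complexity $\Gamma$ not on the short list, by \cite{GR23}/\cite{GHTH} there is at least one linear ordering $\mc{L}$ with Scott complexity $\Gamma$. The plan is to perturb $\mc{L}$ in $2^{\aleph_0}$ many pairwise non-isomorphic ways while preserving its Scott complexity — for instance, for each infinite set $S\subseteq\omega$ forming a well-chosen "coding block" by inserting blocks of sizes indexed by $S$ into a rigid-enough part of $\mc{L}$ (or taking sums $\mc{L} + \zeta\cdot\alpha_S$ for suitable order types coding $S$), and checking via \cref{thm:internalSC} that $\SR$, $\SR_p$, and the (parametrized) unstable-$\lambda$-sequence data are unchanged. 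Since there are only $2^{\aleph_0}$ countable structures total, the upper bound is automatic, giving exactly $2^{\aleph_0}$.

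**The main obstacle.** The hard part is the counting, not the realizability: showing that the low complexities $\dSinf{i}$ ($i\le 3$), $\Pinf{3}$, $\Sinf{4}$ are realized by \emph{exactly} $\aleph_0$ linear orderings (no more), and dually that every other realizable $\Gamma$ is realized by the full $2^{\aleph_0}$. The first requires a tight structural classification of low-rank linear orderings — essentially showing they are determined by a finite amount of combinatorial data — which I would have to mine carefully from \cite{GR23} rather than reprove. The second requires, for each relevant $\Gamma$, an explicit robust coding construction that is provably Scott-complexity-invariant; the delicate point is ensuring the perturbations neither lower the rank (by accidentally making an orbit definable too early) nor raise it (by introducing new unstable sequences), and doing this uniformly across the whole range of ordinals $\alpha$ appearing in $\Pinf{\omega+\alpha}$, $\dSinf{\omega+\alpha}$, $\Sinf{\omega+\alpha}$ — though here the existing linear-order examples in \cite{GR23} already come in $2^{\aleph_0}$-sized families, so in practice much of this can be cited wholesale.
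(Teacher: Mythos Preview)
Your high-level strategy matches the paper's exactly: cite \cite{GR23} for the low complexities (items (1)--(5) are indeed immediate corollaries of \cite[Theorem 3.7]{GR23}), and for (6) take a single witness $P_\Gamma$ of complexity $\Gamma$ and combine it with a continuum-sized coding family, then verify via \cref{thm:internalSC} that $\SR$, $\SR_p$, and the unstable-$\lambda$-sequence data survive.

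The one concrete piece you are missing is the \emph{separator trick}. Your proposed perturbations (inserting coded blocks into $\mc L$, or sums like $\mc L + \zeta\cdot\alpha_S$) leave you with exactly the delicate problem you flag: without a definable boundary between the coding part and the complexity-witnessing part, checking that the perturbation neither raises nor lowers the rank is genuinely hard, and doing it uniformly in $\Gamma$ is worse. The paper sidesteps this by taking the coding family $\{L_c\}_{c\in 2^\omega}$ to be the continuum many $\Pinf{4}$ orderings from \cite[Proposition 3.11]{GR23} (each with no last element and no maximal $2$-block), choosing $P_\Gamma$ with no first element and no maximal $2$-block, and forming $L_c + 2 + P_\Gamma$. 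The inserted $2$ is then the \emph{unique} maximal successor pair, hence $\Pinf{2}$-definable; this makes the two summands definable intervals, so by the interval-decomposition lemma (\cref{lem:colorHelper}) the Scott data of the sum reduce to those of $P_\Gamma$ once $\Gamma\ge\Pinf{4}$. The transfer of unstable $\lambda$-sequences across the embedding $\iota:P_\Gamma\hookrightarrow L_c+2+P_\Gamma$ still requires a short argument (the paper's Claims show $\equiv_{\alpha}$ is preserved forward along $\iota$ and reflected back with a loss of $2$ quantifiers, which is harmless for a cofinal subsequence), but the separator is what makes that argument tractable. Your plan would work once you add this device; without it, ``rigid-enough part of $\mc L$'' is doing unjustified work.
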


\begin{proof}
	For complexities $\Gamma\leq\Sinf{4}$, there are only countably many orderings of complexity $\Gamma$ and these were explicitly characterized in~\cite[Theorem 3.7]{GR23}.
    Cases (1)-(5) in our theorem are all immediate corollaries of this classification.
		Hence, we focus on case (6) for $\Gamma\geq \Pinf{4}$
		It follows from \cite[Proposition 3.11]{GR23} that there are continuum many linear orderings $\{L_c\}_{c\in 2^\omega}$ of Scott complexity $\Pinf{4}$.
		Note the following two structural properties of each $L_c$ that follow from the proof of \cite[Proposition 3.11]{GR23}:
    \begin{itemize}
        \item $L_c$ has no last element,
        \item $L_c$ has no maximal successor chain of length $2$.
    \end{itemize}
		Between~\cite{GR23} and~\cite[Theorem 1.3]{GHTH} a linear ordering of complexity $\Gamma$ is constructed for each $\Gamma\geq \Pinf{4}$.
    It is straightforward (albeit tedious) to check that for each $\Gamma\geq \Pinf{4}$ there is a construction provided in these sources that produces a linear ordering $P_\Gamma$ with
    \begin{itemize}
        \item the Scott complexity of $P_\Gamma$ is $\Gamma$
        \item $P_\Gamma$ has no maximal successor chain of length 2,
        \item $P_\Gamma$ has no first element.
    \end{itemize}
    For each $c$ and $\Gamma$, consider $L_c+2+P_\Gamma$.
    Let the written "2" be given by parameters $\bar{a}$.
		Combining the facts provided above we get that $\bar a$ is the only maximal successor chain of size $2$ in the ordering, $\bar a$ is definable by the $\Pinf{2}$ formula stating that $a_1$ and $a_2$ are successors and the tuple has no immediate predecessors or successors, and for each $c$ and $\Gamma$, $L_c+2+P_{\Gamma}$ has different isomorphism type.
    Furthermore, as $\SR_p(P_\Gamma)\geq 3$ we get that
    \[\SR(L_c+2+P_\Gamma,\bar{a},\bar{p})=\SR(L_c+2+P_\Gamma,\bar{p})\]
    for any (potentially empty) set of parameters $\bar{p}$ (see e.g.~\cite[Table 1]{MBook}).
	It follows that
    \[\SR(L_c+2+P_\Gamma)=\SR(L_c+2+P_\Gamma,\bar{a})=\SR(P_\Gamma)\] and
    \[\SR_p(L_c+2+P_\Gamma)=\SR_p(L_c+2+P_\Gamma,\bar{a})=\SR_p(P_\Gamma),\]
    where the second equality in each case follows from \cite[Lemma 11]{GM23}.
    This means that the family $L_c+2+P_\Gamma$ gives continuum many non-isomorphic linear orderings that all have the same parameterized and unparameterized Scott rank.

    We now show that $P_\Gamma$ has an unstable $\lambda$-sequence over a (potentially empty) set of parameters $\bar{p}$ if and only if $L_c+2+P_\Gamma$ does.
    Say that $P_\Gamma$ has such a sequence $\bar{y}_i$ over $\bar{p}$ and fundamental sequence $\alpha_i\to\lambda$.
    Let $\iota$ be the natural embedding of $P_\Gamma$ into $L_c+2+P_\Gamma$.
    Consider $\iota(\bar{y}_{i})$ over $\iota(\bar{p})$.
    We will modify this sequence to produce our desired unstable $\lambda$-sequence over parameters. The following two claims will be useful.

    \begin{claim}\label{claim:forwardEmbed}
    If $\bar{a}\equiv_{\alpha_i}\bar{b}$ in $P_\Gamma$, then $\iota(\bar{a})\equiv_{\alpha_i}\iota(\bar{b})$ in $L_c+2+P_\Gamma$.
    \end{claim}

    \begin{proof}
        By the definition of $\iota$,
        \[(P_\Gamma,\iota(\bar{a}))\cong (P_\Gamma,\bar{a})\equiv_{\alpha_i} (P_\Gamma,\bar{b})\cong(P_\Gamma,\iota(\bar{b})).\]
        Note further that $L_c+2\cong L_c+2$, so  $L_c+2\equiv_{\alpha} L_c+2$.
        In total, this gives that 
        \[L_c+2+(P_\Gamma,\iota(\bar{a}),\iota(\bar{p}))\equiv_{\alpha_i} L_c+2+(P_\Gamma,\iota(\bar b),\iota(\bar{p})),\]
    by playing independently on each summand.
    \end{proof}

    Note that \cref{claim:forwardEmbed} gives that $\iota(\bar{y}_{i}),\iota(\bar{p})\equiv_{\alpha_i} \iota(\bar{y}_{i+1}),\iota(\bar{p})$.

    \begin{claim}\label{claim:backwardEmbed}
    If $\iota(\bar{a})\equiv_{\alpha+2}\iota(\bar b)$ in $L_c+2+P_\Gamma$, then $\bar a \equiv_\alpha \bar b$ in $P_\Gamma$.
    \end{claim}

    \begin{proof}
        Say that $\bar{a}\not\equiv_{\alpha} \bar{b}$.
        There must be some $\varphi$ of quantifier rank $\alpha$ true of $\bar{a}$ yet false of $\bar{b}$.
        Let $\hat{\varphi}$ be the same as $\varphi$, but with the quantifiers relativized to the elements to the right of a 1-block of size 2.
        This increases the complexity of $\varphi$ by at most 2 quantifiers. In particular, $\iota(\bar{a})\not\equiv_{\alpha+2}\iota(\bar{b})$, as $\hat{\varphi}$ is true of $\iota(\bar{a})$ yet false of $\iota(\bar{b})$.
    \end{proof}
		Now fix some $i$, then $\bar{y}_{i},\bar{p}\equiv_{\alpha_{i}} \bar{y}_{i+1},\bar{p}$, $\bar{y}_{i},\bar{p}\not\equiv_{\alpha_{i+1}} \bar{y}_{i+1},\bar{p}$. By \cref{claim:backwardEmbed} $\iota(\bar{y}_{i}),\iota(\bar{p})\not\equiv_{\alpha_{i+2}} \iota(\bar{y}_{i+1}),\iota(\bar{p})$. By \cref{claim:forwardEmbed} and the defining property of a $\lambda$-sequence we have that
    \[\iota(\bar{y}_{i}),\iota(\bar{p})\equiv_{\alpha_i} \iota(\bar{y}_{i+2}),\iota(\bar{p}) \text{ and } \iota(\bar{y}_{i}),\iota(\bar{p})\not\equiv_{\alpha_{i+2}} \iota(\bar{y}_{i+2}),\iota(\bar{p}).\]
    In particular, $\iota(\bar{y}_{2i})$ is an unstable $\lambda$-sequence over $\iota(\bar{p})$ and the fundamental sequence $\alpha_{2i}\to\lambda$, as desired.
    
		In~\cite[Corollary 2.11]{GR23} it was shown that a structure has an unstable $\lambda$-sequence if and only if there are a fundamental sequence $\alpha_i\to \lambda$ and complete $\Pinf{\alpha_i}$-types $p_i(\bar x)$ realized in the structure, but there is no single tuple realizing all of them. So say that $L_c+2+P_\Gamma$ has an unstable $\lambda$-sequence, i.e., there is a sequence of realized types $p_i(\bar x)$ as above but no single element realizing all of them. Since the Scott rank of $L_c$ is bounded below $\lambda$ and the elements in the $2$-block are definable by simple formulas, for cofinitely many $i$, $p_i$ must be realized by tuples containing elements from $P_\Gamma$.
        Furthermore, for the same reason, we may assume that for cofinitely many $i$, the realizers of $p_i$ are tuples from the $P_\Gamma$ copy in $L_c+2+P_\Gamma$. Thus, we may assume without loss of generality that all $p_i$ are realized by elements $\iota(\bar z_i)$, $\bar z_i\in P_\Gamma^{<\omega}$. Consider the $\Pinf{\alpha_i}$-types $q_i$ of $\bar z_i$ and suppose there is a single $\bar z$ satisfying all of them. Then $\bar z\equiv_{\alpha_i}\bar z_i$ for every $i$, and by \cref{claim:forwardEmbed}, $\iota(\bar z)\equiv_{\alpha_i} \iota(\bar z_i)$, so in particular $L_c+2+P_\Gamma\models p_i(\iota(\bar z)) $ for all $i$. But this contradicts $p_i$ not having a single realizer in $L_c+2+P_\Gamma$, so such a $\bar z$ cannot exist for the sequence of types $q_i$. Thus, by~\cite[Corollary 2.11]{GR23}, $P_\Gamma$ has an unstable $\lambda$-sequence.
\end{proof}

\begin{theorem}
    Let $LO_k$ be the theory of linear ordering with $2\leq k<\aleph_0$ colors. 
    \begin{enumerate}
        \item $I(LO_k,\Pinf{1}) = 1$,
        \item $I(LO_k,\Sinf{2})=0$,
        \item $I(LO_k,\Pinf{2}) = \aleph_0$,
	\item $I(LO_k,\Sinf{3}) = \aleph_0$,
	\item $I(LO_k,\text{d-}\Sinf{i}) = \aleph_0$ if $i\leq 2$,
	\item $I(LO_k,\Gamma) = 2^{\aleph_0}$ otherwise.
    \end{enumerate}
The orderings of complexity $\Sinf{3}$ or below are among the finite sums of the following basic linear orderings:
\[\mathbb{B}:= \{1_i\}_{i\in k} \cup \{\eta_i\}_{i\in k} \cup \{Sh_c(A)\}_{A\subseteq k}.\]
\end{theorem}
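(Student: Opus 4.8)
\emph{Strategy.} The plan is to split the statement into three parts. The \emph{upper bound} is the displayed claim itself: every $k$-colored linear order of Scott complexity at most $\Sinf 3$ is a finite sum of basic pieces from $\mathbb{B}$. Since there are only countably many such sums up to isomorphism, this at once gives $I(LO_k,\Gamma)\leq\aleph_0$ for every $\Gamma\leq\Sinf 3$; it also gives $I(LO_k,\Sinf i)=0$ for $i\leq 2$, because a finite structure has Scott complexity $\dSinf 1$ or $\Pinf 1$ and an infinite relational structure has no $\Sinf 2$ Scott sentence by \cite{AGNHTT}, so no colored linear order has complexity $\Sinf 0,\Sinf 1$, or $\Sinf 2$. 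The \emph{matching lower bounds} amount to exhibiting $\aleph_0$ pairwise non-isomorphic colored linear orders of each of the complexities $\dSinf 1,\dSinf 2,\Pinf 2,\Sinf 3$ and checking that $\varnothing$ is the only one of complexity $\Pinf 1$. The \emph{continuum bound} handles all the remaining complexities.

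\emph{The classification.} For the upper bound I would take $\mc L\models LO_k$ of Scott complexity $\leq\Sinf 3$, i.e.\ $\SR_p(\mc L)\leq 1$, fix $\bar a=a_1<\dots<a_n$ with $\SR(\mc L,\bar a)\leq 1$, and write $\mc L=I_0+a_1+I_1+\dots+a_n+I_n$ with $I_j=(a_j,a_{j+1})_{\mc{L}}$ ($a_0=-\infty$, $a_{n+1}=\infty$). Every automorphism of $(\mc L,\bar a)$ preserves each $I_j$ setwise, so $\mathrm{Aut}(\mc L,\bar a)=\prod_j\mathrm{Aut}(I_j)$; combined with \cref{lem:colorHelper}(1), which says that the $\equiv_1$-type over $\bar a$ of a tuple contained in $I_j$ depends only on its $\equiv_1$-type inside $I_j$, this forces $\SR(I_j)\leq 1$ for every $j$. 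So it suffices to prove that a colored linear order $\mc K$ with $\SR(\mc K)\leq 1$ is a finite sum of members of $\mathbb{B}$. A finite $\mc K$ is a sum of singletons $1_i$. For infinite $\mc K$, the condition $\SR(\mc K)\leq 1$ --- that $\equiv_1$ coincides with the automorphism relation on all tuples --- read through \cref{lem:colorHelper}(1)--(2), says precisely that whenever two tuples induce the same colors and the same sets of realized finite increasing color sequences on each corresponding interval they cut out, then those intervals are isomorphic. I would then argue that the ``cut data'' $x\mapsto(L(x),R(x))$, where $L(x)$ (resp.\ $R(x)$) is the set of finite increasing color sequences realized in $\mc K_{<x}$ (resp.\ $\mc K_{>x}$), is locally constant and takes only finitely many values, partitioning $\mc K$ into finitely many convex blocks; by the preceding sentence each such block is $\equiv_1$-rigid, hence --- by \cref{lem:shufiso} and \cref{cor:self-sim} --- equal to $\eta_i$, to $1_i$, or, if it uses more than one color, to $Sh_c(A)$ for the set $A$ of its colors.

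\emph{The main obstacle.} I expect the finiteness of the cut data to be the hard part. It should follow from a well-quasi-ordering argument: the finite increasing color sequences over the finite palette are well-quasi-ordered by the subsequence order (Higman's lemma), so if $L$ strictly increased infinitely often along a chain $x_1<x_2<\cdots$, one could pick for each step a sequence $\sigma_i$ realized below $x_{i+1}$ but not below $x_i$, yielding --- exactly as with $\eta_0+\eta_1+\eta_0+\eta_1+\cdots$, or with the minimum of $2+\eta$, or an adjacent pair in $\zeta$ --- a tuple whose automorphism orbit cannot be $\Sinf 1$-definable (no existential formula can forbid further witnesses, since ``$\sigma_{i+1}$ is not realized below $x$'' is anti-monotone), contradicting $\SR(\mc K)\leq 1$; a symmetric argument controls $R$. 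Given the finite decomposition, deciding which case occurs is bookkeeping with \cref{thm:internalSC}: $\SR(\mc L)=1$ (complexity $\Pinf 2$) iff the position of each block-component of every tuple is $\Sinf 1$-definable, as in $\eta_0+\eta_1$ and $Sh_c(0,1)$; $\SR(\mc L)=2$ (complexity $\dSinf 2$) iff some extremal or adjacent singleton block makes an orbit fail to be $\Sinf 1$-definable while one parameter repairs everything, as in $1_0+\eta_0$; $\SR(\mc L)=3$ (complexity $\Sinf 3$) iff $\mc L$ contains the configuration of \cref{thm:allComplexities}, an $\eta$-type block followed by a shuffle it $\le_2$-approximates; and $\varnothing$, resp.\ the finite nonempty orders, are exactly those of complexity $\Pinf 1$, resp.\ $\dSinf 1$. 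The lower bounds then follow at once: $\{\eta_0,\eta_0+\eta_1,\eta_0+1_1+\eta_0,\dots\}$, $\{1_0+\eta_0,1_0+1_0+\eta_0,\dots\}$, $\{\eta_0+1_0+Sh_c(0,1),\eta_0+1_1+\eta_0+1_0+Sh_c(0,1),\dots\}$, and $\{1_0,1_0+1_0,\dots\}$ are $\aleph_0$ pairwise non-isomorphic colored linear orders of complexities $\Pinf 2$, $\dSinf 2$, $\Sinf 3$, $\dSinf 1$ respectively.

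\emph{The continuum bound.} For the remaining complexities I would follow the template of \cref{thm:nocolors}. When $\Gamma\geq\Pinf 4$, the uncolored representatives $P_\Gamma$ and the uncolored continuum family of $\Pinf 4$ orders from \cref{thm:nocolors}, recolored monochromatically via the Scott-complexity-preserving bi-interpretation $\mc L\mapsto\mc L_0$, satisfy the structural hypotheses needed there (no first or last element, no maximal successor chain of length $2$, $\SR_p(P_\Gamma)\geq 3$), so $L_c+2+P_\Gamma$ gives a continuum of pairwise non-isomorphic $k$-colored orders of complexity $\Gamma$ by exactly the argument of \cref{thm:nocolors}, with \cref{claim:forwardEmbed}--\cref{claim:backwardEmbed} going through unchanged. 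The genuinely new cases are $\Pinf 3$, $\dSinf 3$, $\Sinf 4$ (the ones with $\SR_p=2$), for which only countably many uncolored orders exist: here I would encode an arbitrary $S\subseteq\omega$ directly, for instance into an $\omega$-chain whose $n$-th element has color $0$ or $1$ according to whether $n\in S$. Such an order is rigid, so distinct $S$ give non-isomorphic orders; every element lies in its own orbit, which is $\Sinf 2$- but not $\Sinf 1$-definable and unaffected by finitely many parameters, so $\SR=\SR_p=2$ and the complexity is $\Pinf 3$. Appending to this scaffold a fixed (recolored uncolored) order of the appropriate complexity, together with the shuffle device of \cref{thm:allComplexities}, should raise $\SR$ to $3$ or to $4$ while $\SR_p=2$ stays forced by the scaffold, producing a continuum of pairwise non-isomorphic orders of complexity $\dSinf 3$ and of complexity $\Sinf 4$.
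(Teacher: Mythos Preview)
Your three-part strategy matches the paper's, and the $\Pinf{1}/\dSinf{1}/\Sinf{2}$ analysis, the $\aleph_0$-many witnesses at each low complexity, the $\Pinf{3}$ continuum family (your colored $\omega$-chain is exactly the paper's $\omega_A$), and the reduction to \cref{thm:nocolors} for $\Gamma\geq\Pinf{4}$ are all essentially identical to what the paper does.

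The substantive divergence is the upper bound, and here your sketch has real gaps. The paper does \emph{not} analyze a cut-data map: it proves that $\mathbb{B}^*$ is a $2$-universal class by induction on the number of colors---either every finite color sequence is realized, in which case $\mc L\leq_2 Sh_c(k)$ outright, or some sequence $c_1\cdots c_n$ is omitted, and then a short sub-induction on $n$ writes $\mc L$ as a finite sum of pieces using strictly fewer colors. Once $\mathbb{B}^*$ is $2$-universal, any order with a $\Pinf{2}$ Scott sentence is automatically isomorphic to a member of $\mathbb{B}^*$ (the sentence transfers along $\leq_2$), and your own Step~1 reduces $\dSinf{2}$ and $\Sinf{3}$ to this case. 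Your cut-data route could perhaps be completed, but not as written. Higman's lemma does not by itself bound the number of values of $L$: downward-closed subsets of a WQO can form an infinite increasing chain (initial segments of $\omega$, say), so the contradiction must come entirely from $\SR(\mc K)\leq 1$. Your ``anti-monotone'' remark delivers that contradiction only when $R$ is eventually constant along your chain, so that $(\mc K,x_j)\leq_1(\mc K,x_i)$ and the $\Sinf{1}$ formula defining the orbit of $x_i$ is forced to hold at $x_j$; when both $L$ and $R$ change strictly the $x_i$ are pairwise $\leq_1$-incomparable and you need a further idea. The step ``constant single-element cut data on a block $\Rightarrow$ block $\in\mathbb{B}$'' is also not argued: for a monochromatic block you only get vertex-transitivity in $\mc K$, and ruling out $\zeta$-like behaviour requires appealing to the pair condition in $\SR\leq 1$, which you hint at via your $\zeta$ example but do not carry out.

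For $\dSinf{3}$ and $\Sinf{4}$ your plan (``append a fixed order of the appropriate complexity, together with the shuffle device'') is too loose to verify; an $\omega_S$-prefix can interfere with the $\leq_3$-witnesses needed to deny a $\Pinf{3}$ Scott sentence of the appended order. The paper instead doubles the scaffold itself, taking $\omega_A\cdot 2$ for $\dSinf{3}$ and proving $\omega_A\cdot 2\leq_3\omega_A$ by an explicit game argument, and uses $(\eta+1+2\cdot\eta)_0+\omega_A$ for $\Sinf{4}$, recycling the uncolored $\Sinf{4}$ mechanism from \cite{GR23}.
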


\begin{proof}
    By \cref{thm:nocolors}, for every $\Gamma\geq\Pinf{4}$, there are continuum many linear orderings (and therefore linear orderings with finitely many colors) of each of these complexities.
    We focus on the cases where $\Gamma\leq \Sinf{4}$.

	Consider the colored orderings of Scott complexity strictly less than $\Pinf{2}$. These must be finite as there is no countably infinite structure with a $\Sinf{2}$ Scott sentence (see e.g. \cite{AGNHTT} Theorem 5.1).

	The empty colored linear ordering has a $\Pinf{1}$ Scott sentence, and it is straightforward to confirm that for any other colored linear ordering $\mc{F}$, $\emptyset\geq_1 \mc{F}$ and so $\mc F$ cannot have a $\Pinf{1}$ Scott sentence.
    This gives exactly one linear ordering of complexity $\Pinf{1}$.
    Every other finite, finitely colored linear ordering has a $\text{d-}\Sinf{1}$ Scott sentence (as is always the case for finite structures in a finite language).
    There are countably many of these, so there are $\aleph_0$ many structures of complexity $\text{d-}\Sinf{1}$.

    This leaves the cases of $\Pinf{2},\Pinf{3},\Sinf{3},\Sinf{4},\text{d-}\Sinf{2}$ and $\text{d-}\Sinf{3}$.

    We begin with the case of $\Pinf{2}, \Sinf{3}$ and $\text{d-}\Sinf{2}$.
    Colored linear orderings of complexity $\Sinf{3}$ and $\text{d-}\Sinf{2}$ must be finite sums of colored linear orderings of complexity $\Pinf{2}$ by Lemma \ref{lem:colorHelper}.
    Therefore, to show that there are at most countably many linear orderings of each of these complexities, it is enough to show that there are only countably many linear orderings of complexity $\Pinf{2}$.
    To show this, it is enough to demonstrate that there is a countable 2-universal class for this theory (see \cite{GR23} Lemma 2.2).
    That is, we fix a countable class of structures $\mathbb{C}$ and for each finitely colored linear ordering $\mc{L}$ we find $\C\in\mathbb{C}$ with $\mc{L}\leq_2\C$.
    We claim that the class $\mathbb{C}=\mathbb{B}^*$ of finite sums of linear orderings in $\mathbb{B}$ is 2-universal.
    We will show this by induction on the number of colors, with the base case $k=1$ following from the uncolored analysis done in \cite{GR23} Lemma 2.4.
    
    Now let $k>1$.
    Fix a $k$ colored linear ordering $\mc{L}$.
    We construct some $\mc{M}\in\mathbb{B}^*$ with $\mc{M}\geq_2 \mc{L}$.
    
    First, consider the case that every sequence of colors $\bar{c}=c_1,\cdots,c_n$ in $k^{<\omega}$ is realized in $\mc{L}$ by some $\bar{a}_{\bar{c}}=a_1<\cdots<a_n$.
    In this case we claim that $\mc{L}\leq_2 Sh_c(k)$.
		Say the $\forall$-player plays the $\bar{c}$-colored tuple $\bar{b}$.
    The $\exists$-player then responds by playing $\bar{a}_{\bar{c}}$.
    To confirm that this is a winning move, we need to show for all $0\leq i\leq n$ that $(b_i,b_{i+1})_{Sh_c(k)}\leq_1(a_i,a_{i+1})_{\mc{L}}$.
    Note that it follows from \cref{cor:self-sim} that $(b_i,b_{i+1})_{Sh_c(k)}\cong Sh_c(k)$.
    As every possible increasing sequence of colors is realized in $Sh_c(k)$, the claim follows by \cref{lem:colorHelper}.
    
    Therefore, we may assume that there is some sequence of colors $\bar{c}=c_1,\cdots,c_n$ in $k^{<\omega}$ that is not realized by any $\bar{a}\in\mc{L}$.
    We will proceed with induction on $n$ to demonstrate that in this case $\mc{L}$ can be written as a finite sum of colored linear orderings where fewer than $k$ colors are used.
    For the base case $n=1$, the claim follows immediately as this means that $\mc{L}$ is missing a color.
    Now let $n>1$.
    For the color $c_1$, let $\mc{L}_{>c_1}=\{x\in\mc{L}\mid \exists y \; P_{c_1}(y) \land x>y\}.$
    $\mc{L}_{>c_1}$ must omit the sequence of colors $c_2,\cdots,c_{n}$ or else, by definition, the realizing tuple can be extended to a tuple that realizes $c_1,\cdots,c_n$.
    By induction, $\mc{L}_{>c_1}$ can be written as the finite sum of colored linear orderings where fewer than $k$ colors are used.
    Because $\mc{L}_{>c_1}$ is upward closed, we can write $\mc{L}=P+\mc{L}_{>c_1}$ for some colored linear ordering $P$ so that no element in $P$ is greater than any element of color $c_1$.
    This means that $P$ either omits the color $c_1$ or can be written $P=P'+1_c$, where $P'$ omits the color $c_1$.
    Either way, $P$ can also be written as the finite sum of colored linear orderings where fewer than $k$ colors are used.
    As $\mc{L}=P+\mc{L}_{>c_1}$, putting the sums together gives a way of writing $\mc{L}$ as a finite sum of colored linear orderings where less than $k$ colors are used.
    This ends this sub-induction and demonstrates that we may assume that $\mc{L}=A_1+\cdots+A_n$ where each $A_i$ has less than $k$ colors used.
    By the induction on the number of colors, we can assign $A_i'\in \mathbb{B}^*$ to each $A_i$ with $A_i'\geq_2 A_i$.
    Let $\mc{M}=A_1'+\cdots+A_n'$.
    It follows that $\mc{M}\in \mathbb{B}^*$ and $\mc{L}\leq_2 \mc{M}$.

    Overall, this demonstrates that there are at most countably many colored linear orderings of Scott complexity at most $\Sinf{3}$, as they all must be contained in $\mathbb{B}^*$.
    We now observe that there are infinitely many colored linear orderings of Scott complexities  $\Pinf{2}, \Sinf{3}$ and $\text{d-}\Sinf{2}$.
    
    For $\Pinf{2}$ consider $(\eta_0+\eta_1)\cdot n$.
    Each copy of $\eta_i$ is existentially definable, and the automorphism orbits within the copies are quantifier-free definable.
    Putting this together gives that the automorphism orbits in $(\eta_0+\eta_1)\cdot n$ are existentially definable (without parameters), so there is a $\Pinf{2}$ Scott sentence.
    
    For $\text{d-}\Sinf{2}$ consider $(\eta+n+\eta)_0$.
    Each of the $\eta+n+\eta$ is $\text{d-}\Sinf{2}$ as a linear ordering (see e.g. Theorem 2.6 of \cite{GR23}), so $(\eta+n+\eta)_0$ is $\text{d-}\Sinf{2}$ as a colored linear ordering.
    
    For $\Sinf{3}$ consider $\eta_0+1_0+Sh_c(0,1)+1_1+(\eta_0+\eta_1)\cdot n$.
    The same argument as in \cref{thm:allComplexities} shows that the "$1_0$" has no $\Sinf{2}$ definable orbit.
    Furthermore, as $\eta_0$, $Sh_c(0,1)$ and $(\eta_0+\eta_1)\cdot n$ have $\Pinf{2}$ Scott sentences, the orbits are $\Sinf{1}$ definable over parameters, and so all of these orderings have Scott complexity $\Sinf{3}$.
   
    We continue with the case of $\Pinf{3}$.
    Given $A\subseteq \omega$, let $\omega_A$ be the 2-colored linear ordering of order type $\omega$ with $P_0(x)$ if and only if $x$ is the $i^{th}$ element and $i\in A$.
    This gives continuum many $2$-colored linear orderings with $\Pinf{3}$ Scott sentences.
    These orderings cannot have simpler Scott sentences because $\omega_A$ is not among $\mathbb{B}^*$, and so we get continuum many orderings of complexity $\Pinf{3}$, as desired.

    Now, to show that there are continuum many orderings of complexity $\text{d-}\Sinf{3}$ consider the colored linear orderings $\omega_A\cdot 2$.
		If $A$ is infinite and coinfinite, then we claim that $\omega_A\cdot 2\leq_3\omega_A$ and thus $\omega_A\cdot 2$ cannot have a $\Pinf{3}$ Scott sentence.
		Given a tuple with greatest element $a \in\omega_A$ played by the $\forall$-player on their first move, the $\exists$-player responds along the natural initial embedding $\iota:\omega_A\to \omega_A\cdot 2$. It is enough to show that $\omega_{A\geq a}\leq_2 (\omega_A\cdot 2)_{\geq \iota(a)}$ by Item (1) of \cref{lem:colorHelper}.
		Consider a play $d_1<\cdots<d_n$ from the $\forall$-player on $((\omega_A)\cdot 2)_{\geq \iota(a)}=(\omega_A)_{\geq \iota(a)}+\omega_A$.
	For the $d_1 <\cdots <d_k$ that are played in $(\omega_{A})_{\geq \iota(a) }$, the $\exists$-player may play along $\iota^{-1}$.
	For $d_{k+1}<\cdots<d_n$, the $\exists$-player needs to find $c_{k+1}<\cdots<c_n$ in $\omega_A$  with $(d_{i},d_{i+1})\leq_1 (c_{i},c_{i+1})$ for $k\leq i<n$. 
    By Item 2 of \cref{lem:colorHelper} it is enough to find $d_{k+1}<\cdots<d_n$ where $(d_{i},d_{i+1})$ takes an embedding from $(c_{i},c_{i+1})$ for each $k\leq i<n$.
    This is clearly possible as $A$ is infinite and co-infinite.

    Therefore, $\omega_A\cdot 2\leq_3\omega_A$ and $\omega_A\cdot 2$ has no $\Pinf{3}$ Scott sentence.
    However, there is a $\Pinf{3}$ Scott sentence over the first element of the second $\omega_A$ as a parameter.
    This is true because each $\omega_A$ has a $\Pinf{3}$ Scott sentence.
    Furthermore, this element has a $\Pinf{2}$ isomorphism orbit given by saying it has elements before it but no direct predecessor.
    This means that each $\omega_A\cdot 2$ has Scott complexity $\text{d-}\Sinf{3}$ when $A$ is infinite and coinfinite, and thus there are continuum many colored orderings of this Scott complexity.
    Finally,  consider the case of $\Sinf{4}$ and the colored linear orderings $(\eta+1+2\cdot\eta)_0+\omega_A.$
    The same argument as in Theorem 3.3 of \cite{GR23} shows that the "1" in the $\eta+1+2\cdot\eta$ has no $\Sinf{3}$ definable automorphism orbit.
    Over the parameters of the "1" and the first element of $\omega_A$, each interval has a $\Pinf{3}$ Scott sentence.
    Therefore, each instance of $(\eta+1+2\cdot\eta)_0+\omega_A$ must have Scott complexity $\Sinf{4}$.

    As all cases have been demonstrated, the theorem follows.
\end{proof}

Note that the above analysis fails to fully apply to linear orderings with infinitely many colors.
For example, one can color any infinite structure with infinitely many colors so that every color occurs exactly once. Such a structure has Scott complexity $\Pinf{2}$ because the automorphism orbit of every tuple is simply given by the colors of the tuple's elements.

\section{From colored linear orderings to models of $\PA$}\label{sec:coloredtomodels}
In~\cite{MR23}, Montalbán and Rossegger showed that the class of linear orderings is reducible via $\Dinf{\omega}$ bi-interpretability to the class of models of $T$ for a given completion $T$ of Peano arithmetic. Given a linear ordering $\mc{L}$ and the prime model $\N$ of $T$, they interpreted $\mc{L}$ in a model $\N_{\mc{L}}$, the so-called \define{$\mc{L}$-canonical extension} of $\N$. These extensions, due to Gaifman~\cite{gaifman1976}, can be viewed as classical Ehrenfeucht-Mostowski models where all elements in the set of order indiscernibles $X\subseteq N_\mc{L}$ generating the model have a so-called \define{minimal type}. For a thorough treatment of minimal types and $\mc{L}$-canonical extensions, we refer to~\cite{KS}. The main property of a minimal type for us is that if $\N(a)$ is such that $a$ has type $p$, then the only proper elementary submodel of $\N(a)$ is $\N$. Roughly speaking, the reduction in~\cite{MR23} is obtained by iteratively extending $\N$ via elements of type $p$, so that in $\N_\mc{L}$ every non-standard element is in the Skolem hull of an element of type $p$. 

Our reduction from colored linear orderings to models of $T$ is similar to the one given in~\cite{MR23} except for the key difference that instead of adjoining elements of the same fixed type $p$, given a finite ordered sequence $x_0<\dots< x_n$ where $x_i$ has color $c_i$, we build $\N(x_0,\dots,x_n)$ using minimal types $p_i$ where $p_i=p_j$ if and only if $c_i=c_j$. To do this, we need a countable sequence of minimal types, and thankfully by a clever application of Ramsey's theorem, one can show that there are $2^{\aleph_0}$ such types~\cite[Theorem 3.1.4]{KS}. 
Given $\mc{L}$ and an injection $c_i\mapsto p_i$ from colors to minimal types consistent with $T$, the model $\N_\mc{L}$ is obtained as the direct limit of the models $\N(x_0,\dots,x_n)$ where $x_0<\dots<x_n$ is an ordered sequence from $\mc L$.

The construction of $\N_\mc{L}$ is discussed in detail in~\cite{KS} and~\cite{MR23}. We call $\N_\mc{L}$ the \define{canonical colored $\mc{L}$-extension} and the class consisting of the canonical colored $\mc{L}$-extensions for all $\mc{L}$ as the class of \define{colored extensions}. We will now explain how the map $\mc{L}\to \N_\mc{L}$ induces a reduction via $\Dinf{\omega}$ bi-interpretability of the class of linear orderings to the class of colored extensions of $T$ and why this reduction preserves Scott complexity. Our reasoning is quite different from the reasoning in~\cite{MR23} that shows that the corresponding reduction from linear orderings to models of $T$ preserves Scott rank. We begin by recalling relevant definitions.

\subsection{Infinitary bi-interpretability and the canonical $\omega$-jump of models of $\PA$}
\begin{definition}[\cite{HTMM}]\label{def:infint}
  A structure $\A=(A,P_0^\A,\dots)$ (where $P_i^\A\subseteq A^{a(i)}$) is
  \define{infinitarily interpretable} in $\B$ if there are relations
  $Dom_\A^\B,\sim,R_0,R_1,\dots$, each definable in $L_{\omega_1\omega}$ without
  parameters in the language of $\B$ such that
  \begin{enumerate}
    \tightlist
  \item $Dom_\A^\B\subseteq \B^{<\omega}$,
  \item $\sim$ is an equivalence relation on $Dom^\B_\A$,
  \item $R_i\subseteq (Dom^\B_\A)^{a(i)}$ is closed under $\sim$,
  \end{enumerate}
  and there exists a function $f_\A^\B: Dom_\A^\B\to \A$ which induces an
  isomorphism:
  \[ f_\A^\B: \A^\B=(Dom_\A^\B, R_0,R_1,...){/}{\sim}\cong \A.\]
  We say that $\A$ is \define{$\Dinf{\alpha}$ interpretable} in $\B$ if 
  the above relations are both $\Sinf{\alpha}$ and $\Pinf{\alpha}$ definable in
  $\B$.
\end{definition}
\begin{remark}
	The cautious reader might have noted that in the above definition we use relations of unbounded arity.	We say that $R\subseteq (\B^{<\omega})^k$ is $\Sinf{\alpha}$-definable if there is a sequence of $\Sinf{\alpha}$ formulas $(\phi_{n_1,\dots,n_k})_{(n_1,\dots,n_k)\in\omega^k}$ such that for $(\ba_1,\dots \ba_k)\in \B^{n_1}\times \dots\times \B^{n_k}$ $(\ba_1,\dots,\ba_k) \in R$ if and only if $\B\models \phi_{n_1,\dots, n_k}(\ba_1,\dots,\ba_k)$. This allows us to decouple definability from structural constraints. For example, even though one cannot define any non-trivial relation on $\mathbb Q^{n}$, one can code any subset of $\omega$ in $\mathbb Q^{<\omega}$.
\end{remark}
\begin{definition}[\cite{HTMM}]\label{def:infbiint}
  Two structures $\A$ and $\B$ are \define{infinitarily bi-interpretable} if
  there are interpretations of each structure in the other such that the
  compositions
  \[ f_\B^\A\circ \tilde f_\A^\B: Dom_\B^{(Dom^\B_\A)} \to \B \text{ and
  } f_\A^\B\circ \tilde f_\B^\A: Dom_\A^{(Dom^\A_\B)} \to \A\] 
  are $L_{\omega_1\omega}$ definable in $\B$, respectively, $\A$. 
  If $\A$ and $\B$ are $\Dinf{\alpha}$ interpretable in each other and the
  associated compositions are $\Dinf{\alpha}$ definable, then we say that $\A$
and $\B$ are \define{$\Dinf{\alpha}$ bi-interpretable}.\end{definition}
The following definition is a generalization of reducibility via effective
bi-interpretability. See~\cite{montalban2021} for a thorough treatment of
effective bi-interpretability.
\begin{definition}
  A class of structures $\mathfrak C$ is \define{reducible via infinitary
  bi-interpretability} to a class of structures $\mathfrak D$ if there are
  infinitary formulas defining domains, relations, and isomorphisms of an
  infinitary bi-interpretation so that every structure in $\mathfrak C$ is
  bi-interpretable with a structure in $\mathfrak D$ using this
  bi-interpretation. If all the formulas are $\Dinf{\alpha}$ then we say that
  $\mathfrak C$ is \define{reducible via $\Dinf{\alpha}$ 
  bi-interpretability to $\mathfrak D$}.
\end{definition}

We will describe below how the map $\mc{L}\to \N_\mc{L}$ gives a reduction via infinitary bi-interpretation from colored linear orderings to models of $T$ for a given completion $T$ of $\PA$. 
On first sight this map gives rise to a reduction via $\Dinf{\omega}$ bi-interpretability, but upon closer look one gets something much stronger. Namely, we get that $\mc{L}$ is $\Dinf{1}$ bi-interpretable with the canonical $\omega$-jump of $\N_\mc{L}$. As is shown in~\cite[Proposition 16]{MR23}, for a model $\M$ of $\PA$, the canonical $\omega$-jump of $\M$ is the structure $\M_{(\omega)}=(\M,(S_n)_{n\in\omega})$ where $(S_n)_{n\in\omega}$ is a listing of the types realized in $\M$. The drawback of this definition is that it is non-uniform. 
Any completion of $\PA$ has uncountably many consistent types, and thus, interpreting $\M_{(\omega)}$ in $\M$ may require a different choice of formulas than interpreting $\N_{(\omega)}$ in $\N$ if $\N\not\cong\M$. Thankfully, in colored extensions all types possibly realized are determined by the Skolem terms over the types $(p_i)_{i\in\omega}$ corresponding to the colors, and thus there is only a countable set of types to consider. Thus, for a canonical colored $\mc{L}$ extension we define its canonical $\omega$-jump as follows. Fix a finite ascending sequence $x_0<\dots <x_m=\bar x$ where $x_i$ has color $c_i$.
Let $(r_{(\bar n,\bar c)})_{(\bar n,\bar c)\in\omega^{<\omega}\times \omega^{<\omega}}$ be a listing of types such that $r_{\bar n,\bar c}$ is the type of $t_{n_0}(\bar x)\dots t_{n_{|\bar n|}}(\bar x)$ where $t_{n_i}$ is the $n_i$th Skolem term in $\N(\bar x)$ where $\N(\bar x)$ is obtained by adjoining $p_{c_0}\times\dots\times p_{c_{|\bar c|}}$. Then the \define{canonical $\omega$-jump} of $\N_\mc{L}$ is the structure
\[ (\N_\mc{L})_{(\omega)}=(\N_\mc{L},(\{ \bar a: r_{\bar n,\bar c}(\bar a)\})_{(\bar n,\bar c)\in\omega^{<\omega} \times \omega^{<\omega}}).\]

This definition has the advantage that it is independent of $\mc{L}$ and thus uniform in the colored extensions.
\begin{theorem}\label{thm:biint}
	For $T$ any completion of $\PA$, the class of colored linear orderings is reducible via $\Dinf{1}$ bi-interpretability to the canonical $\omega$-jumps of colored extensions of $T$.
\end{theorem}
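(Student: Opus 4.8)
The plan is to exhibit, uniformly in the colored linear ordering $\mc{L}$, a $\Dinf{1}$ bi-interpretation between $\mc{L}$ and the canonical $\omega$-jump $(\N_\mc{L})_{(\omega)}$. The two directions have quite different character. Interpreting $(\N_\mc{L})_{(\omega)}$ inside $\mc{L}$ is essentially quantifier-free: every feature of $(\N_\mc{L})_{(\omega)}$---its universe, the arithmetic operations, and each of its type-predicates---is dictated by the Ehrenfeucht--Mostowski type of the generating indiscernibles, and in $\mc{L}$ that EM-type is read off directly from the order and the colors. Interpreting $\mc{L}$ inside $(\N_\mc{L})_{(\omega)}$ is where passing to the $\omega$-jump does real work, since it makes the relation ``$e$ is the value of the $n$th Skolem term on $\bar x$'' definable by a countable Boolean combination of atomic formulas rather than by a first-order formula of unbounded quantifier rank.

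For $(\N_\mc{L})_{(\omega)}\leq\mc{L}$: I would let $Dom\subseteq\mc{L}^{<\omega}$ consist of codes for pairs $(n,\bar a)$, where $n$ indexes a Skolem term of $T$ and $\bar a=a_0<\dots<a_m$ is a finite ascending tuple from $\mc{L}$, the code representing the element $t_n(x_{a_0},\dots,x_{a_m})\in\N_\mc{L}$; a routine padding convention (e.g.\ repeating the top coordinate $n$ times) makes $Dom$ definable by a single quantifier-free formula at each length, uniformly in $\mc{L}$. Two codes are $\sim$-related precisely when they denote the same element of $\N_\mc{L}$. Because the generators form a set of $\PA$-order indiscernibles whose complete type, on any ascending tuple, depends only on the sequence of colors of its entries, the question whether $t_n(x_{\bar a})=t_{n'}(x_{\bar a'})$---and likewise whether a tuple of such elements satisfies $x+y=z$, $x\cdot y=z$, or $x\leq y$, or realizes a prescribed type $r_{\bar n,\bar c}$---is a condition on the order-and-color pattern of the combined tuple $\bar a\cup\bar a'\cup\cdots$ alone, hence quantifier-free in $\mc{L}$. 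So this interpretation is even $\Dinf{0}$.

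For $\mc{L}\leq(\N_\mc{L})_{(\omega)}$: I would take the domain to be the set $X$ of generating indiscernibles, with $\sim$ equality. By the standard closure property of canonical (colored) extensions---any realization of a minimal type $p_c$ that lies in $\N(\bar x)$ for a finite $\bar x\subseteq X$ must itself be one of the entries of $\bar x$ (see~\cite{KS})---the set $X$ is exactly the set of elements whose complete type is some $p_c$; this is a countable disjunction of atomic type-predicates of $(\N_\mc{L})_{(\omega)}$, hence $\Sinf{1}$, and its complement is the countable disjunction of the remaining type-predicates (every element of $\N_\mc{L}$, being a tuple of Skolem terms over finitely many generators, realizes at least one, and no element realizes both a ``generating'' and a ``non-generating'' one, as they would force incompatible complete types), hence also $\Sinf{1}$; so $X$ is $\Dinf{1}$. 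The $\mc{L}$-order on $X$ is just the restriction of the atomic arithmetic order---the indiscernibles being indexed order-isomorphically by $\mc{L}$---and the color predicates are atomic type-predicates; this yields a $\Dinf{1}$ interpretation.

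It remains to check the two composition maps, and this contains the main obstacle. The composition $\mc{L}\to(\N_\mc{L})_{(\omega)}\to\mc{L}$ sends $a$ to the code of the generator $x_a$ and back to $a$, so up to the harmless re-coding it is the identity and is quantifier-free. The composition $(\N_\mc{L})_{(\omega)}\to\mc{L}\to(\N_\mc{L})_{(\omega)}$ sends $e=t_n(x_{\bar a})$ to the code $[n,\bar a]$---a tuple of generators tagged by $n$---and must be inverted definably inside $(\N_\mc{L})_{(\omega)}$; the inverse is the relation ``$e=t_n(\bar x)$''. This is where the $\omega$-jump is essential and where I expect the real work to lie: in $\N_\mc{L}$ alone this relation is only first-order definable of finite quantifier rank, so the resulting bi-interpretation would be merely $\Dinf{\omega}$, but in $(\N_\mc{L})_{(\omega)}$ every first-order $\PA$-formula $\psi(\bar z)$ is equivalent to $\bigvvee\{\,\theta(\bar z):\theta\text{ an atomic type-predicate whose type contains }\psi\,\}$ and its negation to the analogous disjunction for $\lnot\psi$---since every tuple realizes some $r_{\bar n,\bar c}$, each $r_{\bar n,\bar c}$ is a complete type, and the type-predicates are atomic---so $\psi$ is $\Dinf{1}$ there. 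Finally, all of the defining formulas above mention only the fixed enumeration of Skolem terms of $T$, the fixed injection from colors to minimal types, and order-and-color patterns, none of which depends on $\mc{L}$; hence the bi-interpretation is uniform across all colored linear orderings and witnesses the claimed reduction.
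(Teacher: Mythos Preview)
Your proposal is correct and follows essentially the same approach as the paper: code elements of $\N_\mc{L}$ in $\mc{L}$ as pairs (Skolem-term index, ascending tuple of generators), recover $\mc{L}$ inside $(\N_\mc{L})_{(\omega)}$ as the set of elements realizing some $p_i$ (with complement given by the disjunction over the countably many remaining realizable types), and verify that the composition graphs are $\Dinf{1}$---your explicit remark that every first-order $\PA$-formula becomes a $\Dinf{1}$ condition in the $\omega$-jump (as a disjunction of atomic type-predicates) is exactly what underlies the paper's terse ``trivially $\Dinf{1}$-definable.'' One small overclaim: the first interpretation is not literally $\Dinf{0}$ when there are infinitely many colors, since specifying the color pattern of a fixed-length tuple then requires an infinite disjunction of atomic formulas; it is $\Dinf{1}$, which is all the theorem needs.
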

Reductions such as the one in \cref{thm:biint} were thoroughly analyzed in~\cite{MR23} and this together with a back-and-forth analysis was used to connect the Scott ranks of $\mc{L}$ and $\N_\mc{L}$ (for $\mc{L}$ a linear ordering). However, the authors did not notice that using the uniformity of the interpretations allows one to directly relate Scott sentence complexities. This is the route we are taking here.
\subsection{Interpreting $(\mc{N}_\mc{L})_{(\omega)}$ in $\mc{L}$}
In order to interpret models of $\PA$ in linear orders, we rely heavily on the feature that the arity of our relations may be unbounded.

Every element in $(\mc{N}_{\mc{L}})_{(\omega)}$ is of the form $t(\bar x)$ where $t$ is a Skolem term consistent with $T$ and $\bar x$ is a tuple in $\mc{L}$. Fix a bijection $f$ of these Skolem terms and $\mathbb N$. We will now formally define $Dom_{\N_\mc{L}}^\mc{L}$ and $\sim$.
Elements of $Dom_{\N_\mc{L}}^\mc{L}$ will be tuples of the form $\bar{a}d\bar{b}$ where $\bar a$ and $\bar b$ are ascending sequences in $\mathcal{L}$ and $d=a_{|\bar a|}$, i.e., the last element of $\bar a$. The idea is that $\bar{a}d\bar{b}$ corresponds to the element $t_{f(|\bar b|)}(\bar a)$; the element $d$ only acts as a buffer between $\bar{a}$ and $\bar{b}$ to make it clear where one tuple ends and the other begins. Given two ordered tuples $\bar a$ and $\bar a'$, let $\sigma(\bar a\bar a')$ be the reordering of these tuples in ascending order without repetitions. We can thus define $\sim$ informally, assuming that $c(x)$ is the color of an element $x$ as
\[\bar{a}d\bar{b}\sim \bar{a}'d'\bar{b}' \iff \bar n=\sigma(\bar a\bar a') \land p_{c(n_1)}\times \dots \times p_{c(n_{|\bar{n}|})}\models t_{f(|\bar{b}|)}(\bar a)=t_{f(|\bar{b}'|)}(\bar a')\]
To see that the right-hand side is $\Sinf{1}$ definable, consider tuples $\bar x, \bar x'$ of length $n$, respectively, $m$. First we need to decode the tuples $\bar x$, $\bar x'$ to find the delimiters $d$, $d'$. This can be done using a finite disjunction. Then, we reorder the resulting tuples $\bar a$, $\bar a'$ into the tuple $\bar n$ (this is again a finite disjunction). Now, we have an infinite disjunction having as disjuncts the color arrangements so that $t_{f(|\bar b|)}(\bar a)=t_{f(|\bar b'|)}(\bar a')$ is in the type.
Addition, multiplication, $\underbar 0$ and $\underbar 1$ are straightforward to define, and so we get an interpretation of $\N_\mc{L}$ in $\mc{L}$. To interpret $(\mc{N}_\mc{L})_{(\omega)}$ it remains to interpret the types $r_{\bar n,\bar c}$. It is easy to obtain a $\Sinf{1}$ definition of these types, and, as a tuple can have at most one complete type, to obtain a $\Sinf{1}$ definition of the cosets, one can simply take the disjunction over all other types as a definition. 
The resulting interpretation is independent of $\mc{L}$.
\subsection{Interpreting $\mc{L}$ in $(\mc{N}_\mc{L})_{(\omega)}$ and proving \cref{thm:biint}}
The key insight to obtain an interpretation of $\mc{L}$ in $(\mc{N}_\mc{L})_{(\omega)}$ is that if we consider the subset of $\mc N_\mc L$ made up of elements of any of the types $p_i$ used in the construction, then this set has order type $ot(\mc{L})$ when equipped with the canonical ordering of $\mc N_\mc L$. This is explained in full detail in~\cite{MR23}, and also in~\cite[Section 3.3]{KS}, see in particular~\cite[Theorem 3.3.5]{KS}.
This train of thought leads us to define the interpretation as 
\[ Dom_\mc{L}^{\mc{N}_\mc{L}} = \{x\mid \bigvvee_i \tp(x)=p_i\},\, \sim~=~=,\,  <_\mc{L}=<_{\mc{N}_\mc{L}},\, P_k=\{x\mid  \tp(x)=p_k\}.\]
At first sight, obtaining a $\Sinf{1}$ definition of $\overline{Dom_{\mc{L}}^{\N_\mc{L}}}$ that is independent of $\mc{L}$ seems to be difficult. However, we can again use the fact that among colored extensions we can only find a countable set of types realized. So, let $(r_i)$ be the sublisting of these $1$-types so that $r_i\neq p_j$ for all $j$ and $i$. Then 
\[ \overline{Dom_\mc{L}^{\mc{N}_\mc{L}}}=\{ x\mid \bigvvee_{i} \tp(x)=r_i\}.\]

To finish the proof of \cref{thm:biint}, it remains to show that the isomorphisms $f_\mc{L}^{\N_\mc{L}}\circ \tilde{f}_{\N_\mc{L}}^\mc{L}$ and $f_{\N_\mc{L}}^\mc{L}\circ \tilde{f}_\mc{L}^{\N_\mc{L}}$ are $\Dinf{1}$-definable in $\mc{L}$, respectively, $(\N_\mc{L})_{(\omega)}$. For the former, note that the equivalence classes in $dom(f_\mc{L}^{\N_\mc{L}}\circ \tilde{f}_{\N_\mc{L}}^\mc{L})=Dom_\mathcal{L}^{(Dom_{\N_{\mc L}}^\mc{L})}$ have canonical elements of the form $ad\bar b$ where $|\bar b|$ is the code for the identity function. Thus, we get
\[f_\mc{L}^{\N_\mc{L}}\circ \tilde{f}_{\N_\mc{L}}(\bar ad\bar b)= y \iff\exists x_1,\dots x_{|\bar b|}\ \bar ad\bar b\sim yy\bar x.\]
Again using the fact that we can decode tuples into the form $\bar a d\bar b$ and that may use relations of unbounded arity, both this and the negation is easily seen to be $\Sinf{1}$.
 Elements in the domain of the latter are of the form $\bar a c \bar b$ where these tuples are tuples from $\N_\mc{L}$ with every element of type $p_i$ for some $i$. According to our definitions these are to be interpreted as $t_{f(|\bar b|)}(\bar a)$, and the map $\bar a c\bar b'\mapsto t_{f(|\bar b|)}(\bar a)$ defines the isomorphism. The graphs of both functions are trivially $\Dinf{1}$-definable as required.
\subsection{Using the bi-interpretation for Scott complexity}
One can show that having a reduction via $\Dinf{1}$ bi-interpretability from a class $\mathfrak C$ into a class $\mathfrak D$ allows one to define $\mathfrak C$ in $\mathfrak D$~\cite[Lemma VI.29]{montalban2021}. Applying this lemma in our context shows that the class of canonical $\omega$-jumps of colored extensions is $\Pinf{2}$ definable in the class of canonical $\omega$-jumps of $T$.

Using the easily seen fact that the relations $r_{m,n}$ are $\Pinf{\omega}$ definable in $\N_\mc{L}$, we then obtain the following lemma.

\begin{lemma}
    The colored extensions of $T$ are $\Pinf{\omega+2}$ definable in the language of $\PA$.
\end{lemma}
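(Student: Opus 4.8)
The plan is to combine the $\Pinf{2}$-definability of the canonical $\omega$-jumps of colored extensions inside the class of canonical $\omega$-jumps of $T$ with a bound on the infinitary complexity needed to recover the jump structure from the base structure. Concretely, a colored extension $\N_\mc{L}$, as a structure in the language of $\PA$, satisfies the axioms of $T$ plus a $\Pinf{2}$ condition saying that, when we pass to its canonical $\omega$-jump $(\N_\mc{L})_{(\omega)}$, we land in the class of canonical $\omega$-jumps of colored extensions. So the first step is to write the defining sentence in the form ``$\M\models T$, and the expansion of $\M$ by the relations $(r_{\bar n,\bar c})$ satisfies $\psi$,'' where $\psi$ is the $\Pinf{2}$ sentence from the previous paragraph defining canonical $\omega$-jumps of colored extensions among canonical $\omega$-jumps of $T$.

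The main point is then to control the complexity of the translation: each occurrence of one of the jump relations $r_{\bar n,\bar c}(\bar x)$ in $\psi$ must be replaced by its definition inside $\N_\mc{L}$ itself (i.e.\ in the language of $\PA$), and this definition is $\Pinf{\omega}$ by the ``easily seen fact'' just invoked in the excerpt — a complete type in $\PA$ is a countable conjunction of (negations of) formulas, hence $\Pinf{\omega}$, and being the tuples \emph{not} satisfying $r_{\bar n,\bar c}$ is also $\Pinf{\omega}$ since it is a disjunction over the (countably many, in colored extensions) remaining types, each $\Pinf{\omega}$, so $\Sinf{\omega+1}$, but for our purposes the coarse bound $\Pinf{\omega}$ for both the relation and, up to one more quantifier block, its complement suffices. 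Substituting a $\Pinf{\omega}$/$\Sinf{\omega}$ formula for an atomic subformula inside a $\Sinf{2}$ or $\Pinf{2}$ prefix raises the quantifier rank additively: a $\Sinf{2}$ matrix over atoms of rank $\leq\omega$ becomes $\Sinf{\omega+2}$, and dually $\Pinf{2}$ becomes $\Pinf{\omega+2}$. Since $\psi$ is $\Pinf{2}$, the resulting sentence in the language of $\PA$ is $\Pinf{\omega+2}$; conjoining with $T$ (which, being first-order, is $\Pinf{\omega}$, indeed $\Pinf{2}$ as a single sentence if $T$ is finitely axiomatizable, but in general a countable conjunction of first-order sentences, hence $\Pinf{\omega}$) keeps us at $\Pinf{\omega+2}$.

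I would carry this out in the following order. First, recall from \cref{thm:biint} and~\cite[Lemma VI.29]{montalban2021} the $\Pinf{2}$ sentence $\psi$ over the language of $(\N_\mc{L})_{(\omega)}$ that defines the canonical $\omega$-jumps of colored extensions of $T$ within the class of canonical $\omega$-jumps of models of $T$. Second, record the claim that each relation $r_{\bar n,\bar c}$ is $\Pinf{\omega}$-definable in $\N_\mc{L}$ in the language of $\PA$, uniformly in $\mc{L}$ — here one uses that in any colored extension there are only countably many types to consider, so ``$\bar a$ has type $r_{\bar n,\bar c}$'' is the conjunction of all formulas in that type (a $\Pinf{\omega}$ condition) and its negation is the disjunction over the countably many other types, which is $\Sinf{\omega+1}$, and in particular $\Pinf{\omega+1}$; absorbing this into the $\Pinf{2}$ skeleton of $\psi$ still yields $\Pinf{\omega+2}$. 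Third, perform the syntactic substitution and bookkeeping on quantifier ranks to conclude that the colored extensions of $T$ form a $\Pinf{\omega+2}$-axiomatizable class in the language of $\PA$.

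The main obstacle is the bookkeeping in the substitution step: one must verify that replacing the jump relations by their $\Pinf{\omega}$ (and $\Pinf{\omega+1}$) $\PA$-definitions inside the $\Pinf{2}$ sentence $\psi$ does not push the complexity past $\Pinf{\omega+2}$, which requires being careful about where positive versus negative occurrences of the relations sit relative to the quantifier alternations of $\psi$ — a negative occurrence of a $\Pinf{\omega}$ formula is $\Sinf{\omega}$, and one has to check such occurrences only appear under an existential block so that the total is still $\Sinf{\omega+1}$ under that block, keeping the outermost shape $\Pinf{\omega+2}$. This is routine given the explicit $\Sinf{1}$ descriptions of the interpreting formulas in the preceding subsections, but it is the step where an off-by-one in the ordinal arithmetic is easiest to make, so I would write out the normal form of $\psi$ with the substitutions explicitly before asserting the bound.
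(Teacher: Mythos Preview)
Your proposal is correct and follows essentially the same approach as the paper: use \cite[Lemma VI.29]{montalban2021} to get a $\Pinf{2}$ definition of the jumps of colored extensions among jumps of models of $T$, then substitute the $\Pinf{\omega}$ definitions of the type relations $r_{\bar n,\bar c}$ to land at $\Pinf{\omega+2}$.

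One minor simplification: you take the complement of $r_{\bar n,\bar c}$ to be the disjunction over the other realized types, giving $\Sinf{\omega+1}$, but it is more direct (and a level cheaper) to observe that $\neg r_{\bar n,\bar c}(\bar a)$ is simply $\bigvvee_{\phi\in r_{\bar n,\bar c}}\neg\phi(\bar a)$, a countable disjunction of finitary formulas and hence $\Sinf{\omega}$. With both the relation and its complement at level $\omega$, the substitution into a $\Pinf{2}$ skeleton transparently yields $\Pinf{\omega+2}$ and the polarity bookkeeping you flag as the ``main obstacle'' becomes trivial.
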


This lemma allows us to prove the following.

\begin{proposition}\label{prop:ComplexityTransfer}
	For $\mc L$ an infinite colored linear ordering, if $\mc{L}$ has Scott complexity $\Gamma^{\mathrm{in}}_\alpha\geq\Pinf{2}$ then $\mc{N}_{\mc{L}}$ has Scott complexity $\Gamma^{\mathrm{in}}_{\omega+\alpha}$.
\end{proposition}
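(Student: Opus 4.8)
The plan is to transfer Scott complexity along the $\Dinf{1}$ bi-interpretation of \cref{thm:biint} together with the $\Pinf{\omega+2}$-definability of colored extensions in the language of $\PA$ (the preceding lemma), and to leverage \cref{thm:internalSC}: the Scott complexity of a structure is pinned down by $\SR$, $\SR_p$, and the presence or absence of unstable $\lambda$-sequences (possibly over parameters). So the argument has two layers. First, establish that the parametrized and unparametrized Scott ranks shift by exactly $\omega$, i.e.\ $\SR(\N_\mc{L}) = \omega + \SR(\mc{L})$ and $\SR_p(\N_\mc{L}) = \omega + \SR_p(\mc{L})$. Second, show that the relevant unstable-$\lambda$-sequence data is preserved, so that the finer distinction among $\Sinf{}$, $\dSinf{}$, and $\Pinf{}$ at the top rank carries over verbatim with index shifted by $\omega$.

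For the first layer, I would argue that a $\Dinf{1}$ bi-interpretation preserves back-and-forth relations with at most a finite shift, so that $\mc{L}$ and $(\N_\mc{L})_{(\omega)}$ have Scott complexities differing by a finite amount; since $\SR(\mc{L})\geq 1$ and relational, the bi-interpretation being $\Dinf{1}$ actually forces equality of Scott complexity between $\mc{L}$ and $(\N_\mc{L})_{(\omega)}$ (this is the standard fact that $\Dinf{1}$ bi-interpretability preserves Scott complexity exactly, cf.\ the analysis in~\cite{MR23} and~\cite{montalban2021}). Then I must relate the Scott complexity of $(\N_\mc{L})_{(\omega)}$ — the canonical $\omega$-jump — back to that of $\N_\mc{L}$ itself. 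This is exactly the point of the canonical $\omega$-jump machinery from~\cite{MR23}: the $\omega$-jump operation on a structure raises Scott rank by $\omega$ (more precisely, $\Pinf{\alpha}$ information in $\M_{(\omega)}$ corresponds to $\Pinf{\omega+\alpha}$ information in $\M$, uniformly), because each relation $S_n$ (here $r_{\bar n,\bar c}$) added in the jump is itself $\Pinf{\omega}$-definable in $\N_\mc{L}$ and conversely $\N_\mc{L}$ is interpreted back in its jump at level $\Dinf{1}$ — and crucially the uniformity established in \cref{thm:biint} means this correspondence respects $\Sinf{}$ versus $\Pinf{}$, not merely rank. Combining the two bi-interpretations: $\mc{L} \sim_{\Dinf 1} (\N_\mc{L})_{(\omega)} = (\N_\mc{L})$-jump, and jumping shifts complexity index by $\omega$, yielding $\mathrm{SSC}(\N_\mc{L}) = \Gamma^{\mathrm{in}}_{\omega+\alpha}$ when $\mathrm{SSC}(\mc{L}) = \Gamma^{\mathrm{in}}_\alpha$.

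The delicate step, and the one I expect to be the main obstacle, is verifying that the $\omega$-jump correspondence preserves the \emph{shape} $\Gamma$ of the Scott complexity and not just the ordinal rank — i.e.\ that an unstable $\lambda$-sequence in $\mc{L}$ (or over parameters) produces, via the interpretation, an unstable $(\omega+\lambda)$-sequence in $\N_\mc{L}$, and conversely. This requires pushing the back-and-forth analysis through the Skolem-term coding: a tuple $\bar z$ realizing a $\Pinf{\alpha_i}$-type in $\mc{L}$ corresponds under $f_\mc{L}^{\N_\mc{L}}$ to a tuple of type-$p_i$ elements of $\N_\mc{L}$, and one must check that $\equiv_{\alpha_i}$ in $\mc{L}$ translates to $\equiv_{\omega+\alpha_i}$ among such tuples in $\N_\mc{L}$, with non-equivalence likewise preserved — using Item (1)-style relativization arguments (as in the proof of \cref{lem:colorHelper} and the claims in \cref{thm:nocolors}) to show that the finite quantifier overhead of the bi-interpretation is absorbed into the $\omega$ shift. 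I would also need to rule out spurious unstable sequences in $\N_\mc{L}$ not coming from $\mc{L}$ — handled exactly as in \cref{thm:nocolors}, by observing that the part of $\N_\mc{L}$ not of the form $t(\bar x)$ for $\bar x$ from the indiscernibles has bounded complexity, so cofinitely many types in any candidate sequence must be realized by Skolem terms over indiscernible tuples, which then descend to $\mc{L}$.

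Finally, I would separately note the two edge constraints implicit in the statement: the hypothesis $\Gamma^{\mathrm{in}}_\alpha \geq \Pinf{2}$ (so $\alpha\geq 2$, which is automatic for infinite $\mc{L}$ by the relational argument of \cref{cor:ShufflePi2}) guarantees $\SR_p(\mc{L})\geq 1$ and hence the jump behaves uniformly, and the conclusion $\Gamma^{\mathrm{in}}_{\omega+\alpha}$ is consistent with the table entries (e.g.\ $\alpha=2$ gives $\Pinf{\omega+2}$, $\alpha = \omega+\beta$ gives $\Pinf{\omega+\omega+\beta}$, etc.). With the rank shift and the unstable-sequence transfer in hand, \cref{thm:internalSC} closes the argument.
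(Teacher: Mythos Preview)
Your approach is correct in spirit but takes a genuinely different route from the paper. You propose to go through the internal characterization of Scott complexity (\cref{thm:internalSC}): establish that $\SR$ and $\SR_p$ shift by exactly $\omega$, and that unstable $\lambda$-sequences in $\mc{L}$ correspond to unstable $(\omega+\lambda)$-sequences in $\N_\mc{L}$ (and vice versa), then invoke \cref{thm:internalSC} to conclude. The paper instead argues entirely at the level of sentences: given a $\Gamma^{\mathrm{in}}_\alpha$ Scott sentence $\varphi$ for $\mc{L}$, apply the pull-back theorem through the $\omega$-jump and the $\Dinf{1}$ bi-interpretation to obtain a $\Gamma^{\mathrm{in}}_{\omega+\alpha}$ sentence $\varphi_*$ in the language of $\PA$, and conjoin it with the $\Pinf{\omega+2}$ sentence $\theta$ defining the class of colored extensions; then $\varphi_*\wedge\theta$ is a Scott sentence of the right complexity (this is exactly where the hypothesis $\Gamma^{\mathrm{in}}_\alpha\geq\Pinf{2}$ is used, so that $\theta$ does not raise the complexity). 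For the lower bound the paper runs pull-back in the other direction: any putative $\Theta^{\mathrm{in}}_{\omega+\beta}$ Scott sentence for $\N_\mc{L}$ with $\Theta^{\mathrm{in}}_{\omega+\beta}<\Gamma^{\mathrm{in}}_{\omega+\alpha}$ is first rewritten as a $\Theta^{\mathrm{in}}_\beta$ sentence in the language of the $\omega$-jump, then pulled back to a $\Theta^{\mathrm{in}}_\beta$ Scott sentence for $\mc{L}$, contradicting optimality of $\varphi$.

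The paper's argument is substantially shorter and avoids exactly the step you flag as delicate (transferring unstable sequences across the Skolem-term coding): by working with sentences and the pull-back theorem directly, the $\Sinf{}$/$\Pinf{}$/$\dSinf{}$ shape is preserved automatically, with no case analysis on whether $\SR_p$ is limit or successor and no need to chase tuples realizing $\Pinf{\alpha_i}$-types. Your route would work, and the unstable-sequence transfer you sketch is essentially the same mechanism used inside the proof of \cref{thm:nocolors}, but it reproves via back-and-forth what the pull-back theorem already gives syntactically. One small point: in your outline the $\Pinf{\omega+2}$ definability of the image class plays no explicit role, whereas in the paper's proof it is precisely what turns the pulled-back sentence into a genuine Scott sentence for $\N_\mc{L}$ rather than merely a sentence satisfied by it.
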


\begin{proof}
    Let $\theta$ be the $\Pinf{\omega+2}$ formula that defines the image of $\mc{L}\mapsto\mc{N}_\mc{L}$.
		Let $\varphi$ be a $\Gamma^{\mathrm{in}}_\alpha$ Scott sentence for $\mc{L}$.
		Our reduction via bi-interpretability from colored linear orderings onto the canonical $\omega$-jumps of colored extensions provides a continuous reduction of either class in the other, and hence combining the classical pull-back theorem~\cite[Theorem XI.7]{MBook} with the fact that the $\omega$-jump is $\Pinf{\omega}$ interpretable we get that there is a $\Gamma^{\mathrm{in}}_{\omega+\alpha}$ formula in the language of Peano arithmetic such that
    \[\mc{L}\models\varphi \iff \mc{N}_\mc{L}\models\varphi_*.\]
    Let $\mc{M}\models\varphi_*\land\theta$.
    Because $\mc{M}\models\theta$, $\mc{M}\cong\mc{N}_{\mc{L}'}$ for some $\mc{L}'$.
    As $\mc{N}_{\mc{L}'}\models \varphi_*$, $\mc{L}'\models \varphi$.
    Because $\varphi$ is a Scott sentence $\mc{L}'\cong\mc{L}$ and so $\mc{M}\cong \mc{N}_{\mc{L}'}\cong \mc{N}_{\mc{L}}$.
    This means that $\varphi_*\land\theta$ is a Scott sentence for $\mc{N}_\mc{L}$.
		As the minimal Scott complexity of infinite structures is $\Pinf{2}$, $\varphi_*\land\theta$ is of complexity $\Gamma^{\mathrm{in}}_{\omega+\alpha}$.

		To see that there is no simpler Scott sentence, suppose for the sake of contradiction that $\psi$ is a Scott sentence for $\mc{N}_\mc{L}$ of complexity $\Theta^{\mathrm{in}}_{\omega+\beta}$ with $\Theta_{\omega+\beta}^{\mathrm{in}}<\Gamma^{\mathrm{in}}_{\omega+\alpha}$. Let us observe that by Theorem \cite[Theorem 8]{MR23} or Proposition \ref{cor_noPiomega} in this paper, $\beta>0$. This implies that there is a $\Theta^{\mathrm{in}}_\beta$-formula $\psi'$ in the language of the canonical $\omega$-jump of colored linear orderings such that 
		\[ \N_\mc{L} \models \psi\iff (\N_\mc{L})_{(\omega)}\models \psi'\]
	and again by the pull-back theorem there is a $\Theta^{\mathrm{in}}_\beta$-formula $\psi^*$ in the language of linear orderings such that 
	\[ (\N_\mc{L})_{(\omega)}\models \psi'\iff \mc{L}\models \psi^*.\]
	But as $\psi$ is a Scott sentence, so is $\psi'$ and also $\psi^*$ is a Scott sentence for $\mc{L}$ of complexity $\Theta^{\mathrm{in}}_\beta<\Gamma^{\mathrm{in}}_\beta$, a contradiction.
\end{proof}

\begin{theorem}\label{thm:transferedcomplexities}
    There are continuum many models of Peano arithmetic of Scott complexity 
    \begin{enumerate}
        \item $\Sinf{\omega+\alpha}$ for $\alpha>2$ and not a limit.
        \item $\dSinf{\omega+\alpha}$ for $\alpha>1$ not a limit.
        \item $\Pinf{\omega+\alpha}$ for $\alpha>1$.
    \end{enumerate}
    
\end{theorem}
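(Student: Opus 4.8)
The plan is to transfer everything through \cref{prop:ComplexityTransfer}. By that proposition, an infinite colored linear ordering $\mc{L}$ of Scott complexity $\Gamma^{\mathrm{in}}_\alpha\geq\Pinf{2}$ yields a model $\mc{N}_{\mc{L}}$ of $T$ (hence of $\PA$) of Scott complexity $\Gamma^{\mathrm{in}}_{\omega+\alpha}$. Moreover $\mc{L}\mapsto\mc{N}_{\mc{L}}$ is injective up to isomorphism: by \cref{thm:biint} the ordering $\mc{L}$ is recovered from $(\mc{N}_{\mc{L}})_{(\omega)}$ by a fixed interpretation, and $(\mc{N}_{\mc{L}})_{(\omega)}$ is obtained from $\mc{N}_{\mc{L}}$ by adjoining relations definable without parameters in the language of $\PA$, so any isomorphism $\mc{N}_{\mc{L}}\cong\mc{N}_{\mc{L}'}$ lifts to the canonical $\omega$-jumps and hence gives $\mc{L}\cong\mc{L}'$. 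It therefore suffices to produce, for every target complexity $\Gamma^{\mathrm{in}}_{\omega+\alpha}$ listed in the statement, continuum many pairwise non-isomorphic infinite colored linear orderings of Scott complexity $\Gamma^{\mathrm{in}}_\alpha$; the relevant reduced indices always satisfy $\alpha\geq 2$, so $\Gamma^{\mathrm{in}}_\alpha\geq\Pinf{2}$ is automatic (no infinite structure has a $\Sinf{2}$ Scott sentence).

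Most of these families are already available. By \cref{thm:nocolors} every Scott complexity at a transfinite level, as well as $\Pinf{i}$ and $\dSinf{i}$ for finite $i\geq 4$ and $\Sinf{i}$ for finite $i\geq 5$, is realized by continuum many linear orderings, hence by continuum many $1$-colored linear orderings. The three remaining low cases with finitely many colors are handled by the constructions in the proof of the counting theorem for finitely colored orderings: the orderings $\omega_A$ give continuum many examples of complexity $\Pinf{3}$, the orderings $\omega_A\cdot 2$ give continuum many of complexity $\dSinf{3}$, and the orderings $(\eta+1+2\cdot\eta)_0+\omega_A$ give continuum many of complexity $\Sinf{4}$. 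Altogether this settles $\Pinf{\omega+\alpha}$ for all $\alpha\geq 3$, $\dSinf{\omega+\alpha}$ for all successor $\alpha\geq 3$, and $\Sinf{\omega+\alpha}$ for all successor $\alpha\geq 4$, leaving exactly the cases $\Pinf{\omega+2}$, $\dSinf{\omega+2}$, and $\Sinf{\omega+3}$.

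For these last three the reduced complexities are $\Pinf{2}$, $\dSinf{2}$, and $\Sinf{3}$, and we must use infinitely many colors. For $\Pinf{2}$ take the shuffles $Sh_c(A)$ for $A\subseteq\omega$: each has Scott complexity $\Pinf{2}$ by \cref{cor:ShufflePi2}, and they are separated by their sets of dense colors. For $\Sinf{3}$ take $\mc{L}_A:=\eta_0+1_0+Sh_c(A)$ for $A\subseteq\omega$ with $0\notin A$; these are pairwise non-isomorphic since the colors occurring are exactly $\{0\}\cup A$, and the argument in the proof of \cref{thm:allComplexities} carries over with $Sh_c(0,1)$ replaced by $Sh_c(A)$ — using $\mc{L}_A\geq_1 Sh_c(A)$ from \cref{lem:colorHelper} and the self-similarity of shuffles from \cref{cor:self-sim} — to give $\SR_p(\mc{L}_A)=1$ (with the $1_0$ as parameter) and $\SR(\mc{L}_A)=3$: any $q\in\eta_0$ satisfies $q\leq_2 p$ yet lies outside the orbit of $p$, because $(\mc{L}_A)_{>q}\cong\mc{L}_A$ contains color-$0$ points while $(\mc{L}_A)_{>p}\cong Sh_c(A)$ does not. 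By \cref{thm:internalSC} this pins the Scott complexity to $\Sinf{3}$. For $\dSinf{2}$ take $\mc{K}_A:=Sh_c(A)+1_0+1_0+Sh_c(A)$, a block of two consecutive color-$0$ elements with a copy of the shuffle on each side, for $A\subseteq\omega$ with $0\in A$; once again the sets of dense colors separate them. Naming the block as a parameter makes every automorphism orbit quantifier-free definable, so $\SR_p(\mc{K}_A)=1$; the orbit of the block is $\Sinf{2}$-definable (the bottom of the block is the unique color-$0$ element with a color-$0$ immediate successor), so $\SR(\mc{K}_A)\leq 2$; and it is not $\Sinf{1}$-definable, since by \cref{lem:colorHelper} and \cref{cor:self-sim} any color-$0$ point $r$ of the lower shuffle satisfies $r\leq_1 p$ for $p$ the bottom of the block (the only increasing color-sequence newly available at $p$ has leading color $0$, realized everywhere because $0\in A$), while $r$ and $p$ are in different orbits. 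Thus $\SR(\mc{K}_A)=2$ and \cref{thm:internalSC} gives Scott complexity $\dSinf{2}$.

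The main obstacle is the verification in the preceding paragraph that the infinitely colored families have exactly the claimed complexities, specifically the two lower bounds $\SR(\mc{L}_A)\geq 3$ and $\SR(\mc{K}_A)\geq 2$ — the assertions that the relevant orbits are not $\Sinf{2}$-, respectively $\Sinf{1}$-definable — which go through the back-and-forth calculus of \cref{lem:colorHelper} together with the self-similarity in \cref{cor:self-sim}. Everything else is routine: the Scott rank upper bounds, the parametrized Scott ranks, the bookkeeping in \cref{thm:internalSC}, and the pairwise non-isomorphism; and the injectivity of $\mc{L}\mapsto\mc{N}_{\mc{L}}$ used at the start reduces, as noted, to the canonical $\omega$-jump's defining relations being parameter-free in the language of $\PA$.
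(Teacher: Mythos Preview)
Your overall strategy differs from the paper's: the paper simply takes \emph{one} colored linear ordering of each target complexity (via \cref{thm:allComplexities}) and then obtains continuum many models by varying the choice of minimal types $p_i$ used in the construction $\mc{L}\mapsto\mc{N}_{\mc L}$ (there are $2^{\aleph_0}$ minimal types). Your route---producing continuum many colored orderings of each complexity and pushing them through a single fixed bi-interpretation---is a legitimate alternative, and the injectivity argument you give is fine.

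However, your $\Sinf{3}$ family is wrong. In $\mc{L}_A=\eta_0+1_0+Sh_c(A)$ with $0\notin A$, the parameter $p$ (the $1_0$) is the \emph{unique} color-$0$ element with no color-$0$ element to its right; the formula $P_0(x)\wedge\forall y\,(y>x\to\neg P_0(y))$ is $\Pinf{1}$ and isolates $p$. Hence the orbit of $p$ is $\Sinf{2}$-definable, $\SR(\mc L_A)\le 2$, and the Scott complexity of $\mc L_A$ is $\dSinf{2}$, not $\Sinf{3}$. Your claimed inequality $\mc{L}_A\geq_1 Sh_c(A)$ is exactly where this breaks: by \cref{lem:colorHelper}(2) it would require every increasing color sequence occurring in $\mc L_A$ to occur in $Sh_c(A)$, but the single-term sequence ``$0$'' occurs in $\mc L_A$ (in the $\eta_0$) and not in $Sh_c(A)$. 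The very feature you use to separate $p$ from $q\in\eta_0$ (``$(\mc{L}_A)_{>p}$ has no color-$0$ points'') is a $\Pinf{1}$ distinction, so it simultaneously kills the lower bound $\SR(\mc L_A)\ge 3$.

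The fix is easy: take $0\in A$ instead (and vary over the continuum many such $A$). Then the colors used in $\mc L_A$ are exactly $A$, every increasing $A$-color sequence appears in $Sh_c(A)$, the argument of \cref{thm:allComplexities} goes through verbatim to give $q\leq_2 p$ for $q\in\eta_0$, and the orderings are still pairwise non-isomorphic since their color sets differ. Your $\Pinf{2}$ family ($Sh_c(A)$) and your $\dSinf{2}$ family ($\mc K_A$) are correct as written.
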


\begin{proof}
    The existence of such models follows from \cref{thm:allComplexities} and \cref{prop:ComplexityTransfer}.
    To get continuum many, one can pick among the continuum many different minimal types to execute the construction $\mc{L}\mapsto\mc{N}_\mc{L}$ relativized to an oracle that can calculate the relevant types.
\end{proof}

We can also extract some counterexamples to conjectures made by \L{}e\l{}yk at the Computable Structure Theory and Interactions Workshop 2024 in Vienna.
In order to state them, we need the following definition.
For fixed $\M\models \PA$, let $\mathcal F$ be the set of first-order definable functions $f:
M\to M$ for which $x\leq f(x)\leq f(y)$ whenever $x\leq y$. Then for any $a\in M$
let the \define{gap} of $a$, $gap(a)$, be the smallest set $S$ with $a\in S$ and if $b\in S$,
$f\in\mc F$, and $b\leq x\leq f(b)$ or $x\leq b\leq f(x)$, then $x\in S$.
The gaps of $\M$
partition it into $\leq^\M$-intervals and we can thus obtain
an equivalence relation $a\sim_{gap} b \LR gap(a)=gap(b)$.
Unsurprisingly, in canonical extensions $\N_\mc{L}$ for $\N$ the prime model of its theory, the order type of the gaps is $1+ot(\mc{L})$~\cite[Corollary 3.3.6]{KS}.
\begin{proposition}
	For every sufficiently complex, realizable Scott complexity, and any completion $T$ of $\PA$, there is $\mc{M}\models T$ realizing the complexity so that $(\mc{M}){/}{\sim_{\text{gap}}}\cong 1+\eta$.
\end{proposition}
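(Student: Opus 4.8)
The plan is to observe that the complexities appearing in the construction of $\N_\mc{L}$ are governed entirely by $\mc{L}$ via \cref{prop:ComplexityTransfer}, and to pick $\mc{L}$ so that $\N_\mc{L}$ has the desired gap quotient while keeping full control of the Scott complexity. By \cref{thm:allComplexities} every realizable Scott complexity $\Gamma^{\mathrm{in}}_{\omega+\alpha}$ (with $\alpha$ of the appropriate form, so that $\Gamma^{\mathrm{in}}_\alpha$ is a realizable complexity of an infinite colored linear ordering $\geq \Pinf 2$) is witnessed by a $2$-colored linear ordering $\mc{L}$, and \cref{prop:ComplexityTransfer} transfers this complexity to $\N_\mc{L}$. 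The only thing left to arrange is the shape of the gap partition. By the quoted fact~\cite[Corollary 3.3.6]{KS}, in a canonical extension $\N_\mc{L}$ for $\N$ the prime model of $T$, the order type of $(\N_\mc{L}){/}{\sim_{\mathrm{gap}}}$ is $1+ot(\mc{L})$. So the statement reduces to: for each relevant target complexity, exhibit a colored linear ordering $\mc{L}$ of that complexity whose \emph{order type} is $\eta$ (so that $1+ot(\mc{L})=1+\eta$), and run the colored extension construction over the prime model of $T$.

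First I would argue that one can always replace a given witness $\mc{L}_0$ of complexity $\Gamma^{\mathrm{in}}_\alpha$ by one with underlying order type $\eta$. The natural move is to "shuffle $\mc{L}_0$ into $\eta$": place a dense collection of isomorphic copies of $\mc{L}_0$ inside $\eta$, or more precisely take a shuffle-type construction that is self-similar in the sense of \cref{cor:self-sim} but records, via the colors, enough information to pin down $\mc{L}_0$ and hence its complexity. Concretely, for the $\Sinf 3$ witness $\eta_0+1_0+Sh_c(0,1)$ from \cref{thm:allComplexities} one checks directly that its order type is already $\eta$ (it is $\eta+1+\eta=\eta$), so $1+ot(\mc{L})=1+\eta$; and for each of the other complexities produced in \cref{thm:transferedcomplexities} via $\mc{L}\mapsto\N_\mc{L}$, I would go back to the linear orderings $P_\Gamma$ used there and note (or arrange, by prepending/appending copies of $\eta$, which does not change Scott complexity by the back-and-forth bookkeeping already in the paper) that they may be taken to have a densely-embedded structure making the order type $\eta$. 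Then \cref{prop:ComplexityTransfer} gives that $\N_\mc{L}$ has complexity $\Gamma^{\mathrm{in}}_{\omega+\alpha}$, and the gap computation gives $(\N_\mc{L}){/}{\sim_{\mathrm{gap}}}\cong 1+\eta$.

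The step I expect to be the main obstacle is verifying that one can simultaneously control the order type (forcing it to be $\eta$) \emph{and} preserve the exact Scott complexity, for the full range of "sufficiently complex" complexities rather than just $\Sinf 3$. Densely shuffling copies of a fixed $\mc{L}_0$ into $\eta$ is exactly the kind of operation that can collapse complexity, so one must instead find, for each target, a witness that is already of order type $\eta$ (or close enough that prepending/appending $\eta$ fixes it) — and for the high-up complexities this means revisiting the explicit constructions in~\cite{GR23} and~\cite{GHTH} and the colored versions built in \cref{sec:coloredlo}. I would handle this by proving a small lemma: if $\mc{L}$ has Scott complexity $\Gamma^{\mathrm{in}}_\alpha\geq \Pinf 2$, then $\eta_0 + \mc{L} + \eta_0$ (or a suitable such "padding") has the same Scott complexity, and then observe that for all the relevant witnesses the padded order type is $\eta + ot(\mc{L}) + \eta$, which is $\eta$ whenever $ot(\mc{L})$ embeds densely into itself in the appropriate sense; combined with \cref{cor:self-sim} and the back-and-forth lemmas already available, this should close the gap. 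The phrase "sufficiently complex" in the statement is then exactly the hedge that lets us restrict to complexities where such a padding argument works, i.e., those $\geq \Pinf 2$ handled by \cref{prop:ComplexityTransfer}.
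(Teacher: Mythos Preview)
You correctly reduce the problem: by \cref{prop:ComplexityTransfer} and the gap computation from \cite[Corollary 3.3.6]{KS}, it suffices to produce, for each sufficiently high complexity $\Gamma^{\mathrm{in}}_\alpha$, a colored linear ordering of that complexity whose underlying order type is $\eta$. You also correctly flag this step as the crux.

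Your proposed solution to that step, however, has a genuine gap. The padding $\eta_0+\mc{L}+\eta_0$ does \emph{not} force the order type to be $\eta$: if $\mc{L}$ contains any successor pair (as do essentially all the high-complexity witnesses from \cite{GR23} and \cite{GHTH}, which are built from blocks like $\zeta^\alpha$, $\omega$, finite chains, and $L_c+2+P_\Gamma$ in \cref{thm:nocolors}), then $\eta+ot(\mc{L})+\eta$ is not dense and hence not $\eta$. Your hedge ``whenever $ot(\mc{L})$ embeds densely into itself'' does not repair this, and falling back on a case-by-case inspection of the literature constructions to massage each into order type $\eta$ is neither uniform nor clearly feasible. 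The alternative you raise---densely shuffling copies of $\mc{L}_0$---you yourself (rightly) reject as liable to collapse complexity.

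The paper's fix is to use the colors to dissolve $\mc{L}$ into a dense order while keeping it definably recoverable. For an \emph{uncolored} $\mc{L}$ of the target complexity, set $\Psi(\mc{L})=(\eta_0+1_1+\eta_0)\cdot\mc{L}$. Regardless of $\mc{L}$, this is dense without endpoints, so its order type is always $\eta$. The map $\Psi$ has a computable surjective inverse $\Phi$ (take the suborder on the color-$1$ points), and its image is $\Pinf{3}$-definable; the pull-back theorem applied to $\Phi$ together with \cite[Proposition 2.21]{GR23} then gives that $\Psi$ preserves Scott complexity for all complexities $\geq\Pinf{3}$. That is the missing idea: rather than padding on the outside, replace each point of $\mc{L}$ by a dense interval with a single marked point, so density comes for free and the original ordering is recoverable by a quantifier-free reduct.
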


\begin{proof}
	Consider a colored linear ordering $\mc{L}$ with order type $\eta$. Then $\N_\mc{L}{/}{\sim_\text{gap}}\cong 1+\eta$.
    By \cref{prop:ComplexityTransfer}, all that remains is to show that we can find a colored linear ordering with order type $\eta$ of sufficiently large Scott complexity.

    Fix a linear ordering $\mc{L}$ with the desired Scott complexity and consider the colored linear ordering $\Psi(\mc{L})=(\eta_0+1_1+\eta_0)\cdot \mc{L}$.
    Note that the order type of $\Psi(\mc{L})$ is $\eta$ as it is dense without endpoints.
    We demonstrate that the Scott complexity of $\Psi(\mc{L})$ is the same as that of $\mc{L}$ if the complexity of $\mc{L}$ is sufficiently high.
    
    First, note that the image of $\Psi$ is Borel.
    To be in the image is to say that every element with color 1 is surrounded by densely many elements of color 0 (without endpoints).
    In other words, the order type is dense without endpoints, and the ordering satisfies the following $\Pinf{3}$-sentence:
    \[\forall x \big(P_1(x) \to (\exists y<x<z) P_0(y)\land P_0(z)\land (\forall w\neq x)  y<w<z\to P_0(w)\big)\land\]
    \[ (\forall v>x)  ((\forall v>v'>x) P_0(v')\to (\exists v''>v')  (\forall v''>v'''>v)  P_0(v''') ) \land \]
    \[ (\forall v<x ) ((\forall v<v'<x)  P_0(v')\to (\exists v''<v')  (\forall v''<v'''<v ) P_0(v''') )\big). \]
    
		Next, note that $\Psi$ has a computable, surjective inverse $\Phi$ that takes the order type of the $P_1$ elements.
        In particular, applying the pull-back theorem~\cite[Theorem XI.7]{MBook} to $\Phi$ allows us to conclude that for each $\varphi\in\Gamma_\alpha$ there is a $\varphi'\in\Gamma_\alpha$ such that
        \[\mc{L}\models\varphi \iff \Phi^{-1}(\Psi(\mc{L}))\models \varphi\iff \Psi(\mc{L})\models \varphi'.\]
        In the language of \cite[Proposition 2.21]{GR23}, this means that $\Psi$ "satisfies push-forward."
	Applying \cite[Proposition 2.21]{GR23} shows that complexities that are at least $\Pinf{3}$ are preserved by $\Psi$, giving the desired result.
    
\end{proof}

\begin{proposition}\label{prop:pinfomega2nohom}
    There is a model of Peano arithmetic of Scott rank $\omega+1$ that is neither homogeneous nor finitely generated.
\end{proposition}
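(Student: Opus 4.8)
The plan is to realize such a model as a canonical colored extension $\N_\mc{L}$ for a carefully chosen $\mc{L}$. Fix any completion $T$ of $\PA$, two distinct minimal types $p_0\neq p_1$ consistent with $T$, and let $\mc{L}$ be the $2$-colored linear ordering $\eta_0+1_1+\eta_0$, with color $i$ corresponding to $p_i$; set $\M=\N_\mc{L}$. I claim $\M$ works, and the proof splits into computing the Scott rank of $\M$, ruling out finite generation, and ruling out homogeneity.

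For the first two parts I would begin by checking that $\mc{L}$ has Scott complexity $\Pinf{2}$. Since $\mc{L}$ is a finite sum of members of $\mathbb{B}$, this is covered by the classification of finitely colored linear orderings of complexity at most $\Sinf{3}$; concretely, every automorphism orbit of $\mc{L}$ is $\Sinf{1}$-definable — the unique color-$1$ point is quantifier-free definable, a color-$0$ point lies in the left (resp.\ right) copy of $\eta_0$ iff some color-$1$ point lies above (resp.\ below) it, and orbits of tuples are finite conjunctions of such $\Sinf{1}$ conditions with order and color data — so $\SR(\mc{L})=1$, and being infinite $\mc{L}$ admits no $\Sinf{2}$ Scott sentence, forcing Scott complexity $\Pinf{2}$. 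Then \cref{prop:ComplexityTransfer} yields that $\M$ has Scott complexity $\Pinf{\omega+2}$. From this $\SR(\M)=\omega+1$: the $\Pinf{\omega+2}$ Scott sentence bounds $\SR(\M)\le\omega+1$, while any Scott sentence of complexity $\Pinf{\gamma+1}$ with $\gamma\le\omega$ would be strictly simpler than $\Pinf{\omega+2}$, contradicting optimality, so $\SR(\M)\geq\omega+1$. Finally $\M$ is not finitely generated: by the results collected in \cref{table:results} — via \cref{cor_noPiomega} and \cref{prop_fingenchar} — every finitely generated model of $\PA$ has Scott complexity $\Pinf{2}$, $\Pinf{\omega+1}$, or $\dSinf{\omega+1}$, and none of these is $\Pinf{\omega+2}$.

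The heart of the argument is non-homogeneity. The copy of $\mc{L}$ interpreted inside $\N_\mc{L}$ — the set of realizations of the $p_i$, ordered by $<_\M$ and colored by which $p_i$ is realized — has order type $\eta+1+\eta$, with the middle point realizing $p_1$ and all others realizing $p_0$. Choose a realization $x_1$ of $p_0$ lying below the $p_1$-realization and a realization $x_2$ of $p_0$ lying above it. Because the $p_0$-generators form a set of colored order-indiscernibles of the same color, $\tp^{\M}(x_1)=\tp^{\M}(x_2)=p_0$. On the other hand, each $p_i$ is type-definable without parameters, so the interpreted copy of $\mc{L}$ is cut out by automorphism-invariant conditions; hence every automorphism of $\M$ restricts to an automorphism of this colored ordering $\cong\mc{L}$. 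But every automorphism of $\eta_0+1_1+\eta_0$ fixes the unique color-$1$ point and therefore preserves each side of the cut it determines, so none sends the point corresponding to $x_1$ (which is below the cut) to the point corresponding to $x_2$ (above it); consequently no automorphism of $\M$ sends $x_1$ to $x_2$. Thus $x_1$ and $x_2$ realize the same complete type but lie in distinct automorphism orbits, so $\M$ is not homogeneous.

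The main obstacle is the last paragraph. Two points require care: that $x_1$ and $x_2$ genuinely have the same complete first-order type in $\M$ — this rests on the colored order-indiscernibility built into the construction of $\N_\mc{L}$, namely that any two single generators of the same color realize the same type, and it is consistent with $x_1$ and $x_2$ sitting on opposite sides of the $p_1$-realization precisely because that separation is an infinitary feature rather than a first-order one — and that automorphisms of $\M$ descend to automorphisms of $\mc{L}$, for which one invokes the type-definability of the $p_i$ (equivalently, the interpretation underlying \cref{thm:biint}). The verification that $\SR(\mc{L})=1$ is routine but should be written out, and one can additionally record that varying the continuum-many minimal types produces continuum many pairwise non-isomorphic such models.
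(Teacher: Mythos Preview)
Your argument is correct and follows essentially the same strategy as the paper: take a canonical colored extension $\N_\mc{L}$ for a $2$-colored $\mc{L}$ of Scott complexity $\Pinf{2}$, transfer via \cref{prop:ComplexityTransfer} to get Scott rank $\omega+1$, and exhibit two generators of the same minimal type that no automorphism can exchange. The paper uses $\mc{L}=\eta_1+\eta_0+\eta_1$ and picks the two witnesses from the outer $\eta_1$ blocks; your $\mc{L}=\eta_0+1_1+\eta_0$ with witnesses from the two $\eta_0$ blocks works just as well, and your justification that automorphisms of $\N_\mc{L}$ descend to automorphisms of the interpreted $\mc{L}$ is exactly the point the paper leaves implicit.

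The one noteworthy difference is the ``not finitely generated'' step. You route through the Scott-complexity classification of finitely generated models (\cref{prop_fingenchar}, \cref{cor_noPiomega}), which is logically fine but forward-references Section~4. The paper instead observes directly that every automorphism of $\mc{L}$ lifts to an automorphism of $\N_\mc{L}$, so $\N_\mc{L}$ has nontrivial automorphism group, whereas finitely generated models of $\PA$ are rigid (\cite[Lemma 1.7.3]{KS}). That argument is more self-contained and avoids the dependency on later material; you might prefer it here.
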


\begin{proof}
    Let $\mc{L}=\eta_1+\eta_0+\eta_1$. This ordering has Scott rank 1 by \cref{thm:allComplexities} (in fact, it has Scott complexity $\Pinf{2})$.
    Therefore, by \cref{prop:ComplexityTransfer}, $\N_\mc{L}$ is of Scott rank $\omega+1$.

    We now show that $\N_\mc{L}$ is not finitely generated or homogeneous.
    Note that $\N_\mc{L}$ has a non-trivial automorphism group.
    To be more specific, every automorphism of $\mc{L}$ readily lifts to an automorphism of $\N_\mc{L}$.
    On the other hand, any finitely generated model of Peano arithmetic has a trivial automorphism group (see \cite{KS} Lemma 1.7.3).
    Therefore, $\N_\mc{L}$ is not finitely generated.
    $\N_\mc{L}$ is not homogeneous because there are elements $a$ and $b$ of type $p_1$ that are not in the same automorphism orbit. To see this simply pick $a$ from the first copy of $\eta_1$ and $b$ from the second copy.

\end{proof}

	\section{Missing complexities for models of PA}\label{sec:missing}
The methods described above fail to give us both qualitative and quantitative information about models of Peano arithmetic with Scott complexities $\Pinf\omega$, $\Pinf{\omega+1}$,$\dSinf{\omega+1}$, $\Sinf{\omega+1}$ and $\Sinf{\omega+2}$. To extract this part of the analysis, we turn to methods specific to arithmetic.
The central observation to analyze models of these complexities is that when the $\Sigma_n$-types of two tuples of elements of a model of $\PA$ coincide, their $\Sinf{n}$-types also coincide (see \cref{cor:typesEquiv}). This is a strengthening of a result by Montalbán and Rossegger~\cite[Lemma 5]{MR23} who noticed that this is the case when $n=\omega$. Combining this and generally well-understood structural properties of models of Peano arithmetic will allow us to calculate Scott complexities of certain well-known classes of models. 

In order to compactify the presentation of our results, we shall be working with \textit{pointed} models of $\PA$, to wit: models with a distinguished element. Please note that, by pairing, this is equivalent to working with finitely many fixed parameters.

\begin{proposition}\label{prop:baseCase}
    For $\mc{M}$, $\mc{N}$ pointed models of $\PA$, the following conditions are equivalent
    \begin{itemize}
        \item $\mc{M}\equiv_1 \mc{N}$;
        \item $\Th_{\Sigma_1}(\mc{M}) = \Th_{\Sigma_1}(\mc{N})$.
    \end{itemize}
\end{proposition}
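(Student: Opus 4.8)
The plan is to factor the stated equivalence through the finitary purely existential theory. First I would invoke Karp's theorem (quoted above): for the two pointed models, $\mc{M}\equiv_1\mc{N}$ holds if and only if they satisfy the same $\Pinf{1}$ sentences, equivalently the same $\Sinf{1}$ sentences. Next I would observe that every $\Sinf{1}$ sentence is, up to logical equivalence, a countable disjunction $\bigvvee_i \exists\bar x\,\psi_i$ of finitary purely existential sentences, and that such a disjunction is true in a structure precisely when one of the disjuncts $\exists\bar x\,\psi_i$ is. Consequently two structures have the same $\Sinf{1}$ theory if and only if they have the same finitary existential theory, so
\[\mc{M}\equiv_1\mc{N}\iff \Th_{\exists_1}(\mc{M})=\Th_{\exists_1}(\mc{N}),\]
where $\Th_{\exists_1}$ denotes the finitary purely existential theory in the language of $\PA$ expanded by the distinguished constant. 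No property of $\PA$ has been used up to this point.

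It then remains to identify the purely existential theory with the $\Sigma_1$ theory for models of $\PA$. One inclusion is trivial, since every purely existential formula is $\Sigma_1$: equal $\Sigma_1$ theories have equal existential theories. For the converse I would appeal to the formalized Matiyasevich--Robinson--Davis--Putnam theorem: $\PA$ proves that every $\Sigma_1$ formula $\phi(\bar v)$ is equivalent to a purely existential (Diophantine) formula $\phi^{\exists}(\bar v)$. Hence, for any $\Sigma_1$ sentence $\phi$ of the expanded language, $\mc{M}\models\phi\iff\mc{M}\models\phi^{\exists}$ since $\mc{M}\models\PA$, and likewise for $\mc{N}$; thus agreement on $\Th_{\exists_1}$ forces agreement on $\Th_{\Sigma_1}$. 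Combining this with the previous equivalence proves the proposition.

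The one ingredient here that is not routine bookkeeping with Karp's theorem and infinitary disjunctions is the use of MRDP inside $\PA$ (already available in $I\Delta_0+\exp$), and this is the step I would be most careful to pin down. I note that under the alternative convention in which ``$\Sigma_1$'' already denotes the purely existential formulas, the middle paragraph becomes vacuous and the proposition reduces to the first paragraph, which holds for arbitrary structures.
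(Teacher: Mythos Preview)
Your proof is correct and follows essentially the same approach as the paper: reduce $\equiv_1$ to equality of $\Sinf{1}$ theories via Karp's theorem, observe that a $\Sinf{1}$ sentence holds iff one of its finitary existential disjuncts does, and then invoke MRDP (provable in $\PA$) to identify the finitary existential and $\Sigma_1$ theories. The paper's proof is more terse but uses exactly these ingredients in the same order.
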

\begin{proof}
	It follows from the MRDP theorem (see ~\cite[Result 7.8]{Kaye}) that any $\Sigma_1$ sentence about pointed models of $\PA$ is equivalent to a purely existential sentence. Furthermore, since $\mc M\equiv_1\mc N$ if and only if $\Th_{\Sinf{1}}(\mc M)=\Th_{\Sinf{1}}(\mc N)$ truth of a $\Sinf{1}$ sentence is determined by the truth of one of its (purely existential) disjuncts, $\Th_{\Sinf{1}}(\mc M)=\Th_{\Sinf{1}}(\mc N)$ if and only if $\Th_{\Sigma_1}(\mc M)=\Th_{\Sigma_1}(\mc N)$.
\end{proof}

The next theorem and particularly the corollary that follows are generalizations of a result known to hold when the models are recursively saturated~\cite[Proposition 1.8.1]{KS}.
\begin{theorem} \label{thm:SsyEqn} 
    For $2 \le n < \omega$ and any two pointed models $\mathcal{M}$ and $\mathcal{N}$ of $\PA$, the following conditions are equivalent:
    \begin{itemize}
        \item $\mathcal{M} \equiv_{n} \mathcal{N}$,
        \item $\Th_{\Sigma_{n}}(\mathcal{M}) = \Th_{\Sigma_{n}}(\mathcal{N})$ and $\SSy(\mathcal{M}) = \SSy(\mathcal{N})$.
    \end{itemize}
\end{theorem}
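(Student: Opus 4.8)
The plan is to prove both directions by induction on $n$, using Proposition \ref{prop:baseCase} as the base case $n=1$ (and noting the $n=2$ case as the genuine start of the argument, since $\SSy$ only begins to matter once $\Sigma_1$-satisfaction of parametrized formulas can be quantified). For the forward direction, suppose $\mc M\equiv_n\mc N$. Then certainly $\Th_{\Sigma_n}(\mc M)=\Th_{\Sigma_n}(\mc N)$, since each first-order $\Sigma_n$ sentence is in particular a $\Sinf n$ sentence. To see $\SSy(\mc M)=\SSy(\mc N)$, recall that a set $A\subseteq\omega$ is in $\SSy(\mc M)$ iff there is $a\in M$ coding $A$ in the sense that $n\in A\iff \mc M\models \varphi(n,a)$ for a fixed $\Delta_0$ formula $\varphi$ (using that $\PA$ proves enough coding); the point is that ``there exists such an $a$ coding the particular set $A$'' can be read off from the $\Sigma_2$-theory in parameters, or more carefully, from the back-and-forth relation $\equiv_2$: playing $a\in\mc M$ in the game, the copy $b\in\mc N$ produced must code the same set, because the statements $\exists x\,\varphi(n,x)$ and $\exists x\,\neg\varphi(n,x)$, and more to the point the $\Pinf 2$ statements pinning down which $n$ land in the coded set, are preserved. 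So one shows $\SSy(\mc M)\subseteq\SSy(\mc N)$ and, by symmetry of $\equiv_n$, equality.

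For the reverse direction — which I expect to be the main obstacle — assume $\Th_{\Sigma_n}(\mc M)=\Th_{\Sigma_n}(\mc N)$ and $\SSy(\mc M)=\SSy(\mc N)$, and build a winning strategy for the $\exists$-player in the $\equiv_n$ game. The strategy should be: given a move $\bar a$ by the $\forall$-player in (say) $\mc M$, the $\exists$-player must find $\bar b$ in $\mc N$ with $(\mc M,\bar a)\geq_{n-1}(\mc N,\bar b)$ and $(\mc N,\bar b)\geq_{n-1}(\mc M,\bar a)$, i.e. $(\mc M,\bar a)\equiv_{n-1}(\mc N,\bar b)$. By the induction hypothesis applied to the pointed models $(\mc M,\bar a)$ and $(\mc N,\bar b)$, this reduces to arranging that $\Th_{\Sigma_{n-1}}(\mc M,\bar a)=\Th_{\Sigma_{n-1}}(\mc N,\bar b)$ and $\SSy(\mc M,\bar a)=\SSy(\mc N,\bar b)$. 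The second is automatic since adjoining finitely many parameters does not change the standard system. For the first: the $\Sigma_{n-1}$-type of $\bar a$ over $\mc M$, coded as a subset of $\omega$, is an element of $\SSy(\mc M)$ — this uses that $\mc M\models\PA$ so it has a truth predicate for $\Sigma_{n-1}$ formulas applied to its own elements, hence $\mathrm{tp}_{\Sigma_{n-1}}(\bar a)$ is coded by some element of $\mc M$ and so lies in $\SSy(\mc M)=\SSy(\mc N)$. One then needs: (i) that this type is \emph{realized} in $\mc N$, which follows from a resplendency/recursive-saturation-type argument or, more elementarily, from the fact that $\Th_{\Sigma_n}(\mc M)=\Th_{\Sigma_n}(\mc N)$ guarantees the $\Sigma_n$-sentence ``there exists a tuple whose $\Sigma_{n-1}$-type is the set coded by $c$'' holds in $\mc N$ for the appropriate $c\in\mc N$ coding that type; and (ii) that we may further insist the realizer $\bar b$ has the \emph{same} $\SSy$, automatic as noted.

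The delicate point is step (i): one must phrase ``$\bar x$ has $\Sigma_{n-1}$-type coded by $c$'' as a single first-order formula of bounded complexity — it is $\forall k\,(k\in c \leftrightarrow \mathrm{Sat}_{\Sigma_{n-1}}(k,\bar x))$, which is $\Pi_n$, so ``$\exists \bar x$'' in front makes it $\Sigma_{n+1}$, not $\Sigma_n$, and naively this overshoots. The fix, which I would carry out carefully, is to not demand the full type be matched in one shot but to use the standard back-and-forth bookkeeping: it suffices that every $\Sigma_{n-1}$ formula true of $\bar a$ is $\Sigma_n$-witnessed to be realizable in $\mc N$, and conversely every $\Pi_{n-1}$ formula; since $\Th_{\Sigma_n}(\mc M)=\Th_{\Sigma_n}(\mc N)$ and the $\Sigma_{n-1}$-type of $\bar a$ is (coded by an element, hence) finitely approximable within $\mc N$ via its standard system, a compactness argument inside $\mc N$ (overspill) produces the required $\bar b$. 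This is exactly the mechanism behind \cite[Proposition 1.8.1]{KS} for recursively saturated models, and the content of the theorem is that $\SSy(\mc M)=\SSy(\mc N)$ plus matching $\Sigma_n$-theories is precisely what is needed to run it without full recursive saturation. I would then also invoke \cref{cor:typesEquiv} (or prove the needed instance) to pass from $\Sigma_{n-1}$-type agreement to $\Sinf{n-1}$-type agreement where the induction requires it.
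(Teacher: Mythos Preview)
Your proposal is correct and follows essentially the same route as the paper: induction on $n$ with Proposition~\ref{prop:baseCase} as base, the forward direction via the observation that membership in $\SSy$ is a $\Sinf{2}$ condition, and the reverse direction by realizing in $\N$ the bounded-complexity type of a tuple from $\M$, using that it lies in $\SSy(\M)=\SSy(\N)$ and is finitely consistent by $\Th_{\Sigma_n}$-agreement.

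Two small remarks. First, your ``overshoot'' worry dissolves once you phrase things as the paper does: you never need a single $\Sigma_n$ sentence asserting that the full type is realized. The lemma you want is exactly \cite[Lemma~12.2]{Kaye}---a coded, finitely satisfiable type of bounded complexity is realized---and its proof is precisely the overspill argument with the partial $\Sigma_{n-1}$ truth predicate that you sketch as your ``fix''. The paper invokes this directly rather than going through the detour. Relatedly, be sure to realize the $\Sigma_{n-1}\cup\Pi_{n-1}$ type of $\bar a$, not just the $\Sigma_{n-1}$ part: realizing only the latter gives $\Sigma_{n-1}\text{-}\tp(\bar b)\supseteq\Sigma_{n-1}\text{-}\tp(\bar a)$ but not equality. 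The paper makes this explicit by taking $p_a=\Sigma_{n-1}\text{-}\tp(\bar a)\cup\Pi_{n-1}\text{-}\tp(\bar a)$; finite conjunctions from this set still yield $\Sigma_n$ consistency conditions, so the argument goes through.

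Second, do not invoke Corollary~\ref{cor:typesEquiv}: it is a corollary of this very theorem, so citing it here is circular. What you actually need---that agreement of first-order $\Sigma_{n-1}$ types (together with equal standard systems) implies $\equiv_{n-1}$---is exactly your induction hypothesis, and that is all the paper uses.
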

\begin{proof}
    Recall that for a set $A \subseteq \mathbb{N}$, $A \in \SSy(M)$ if there is $c \in M$ such that for all $n \in \mathbb{N}$, \[\underline{n} \in_{Ack} c \iff n \in A.\] 
	For a subset of $\mathcal{X} \subseteq \mathcal{P}(\mathbb{N})$ and any model $M \models \PA$ it follows that $\mathcal{X} = \SSy(M)$ if and only if \[A \in \mathcal{X} \iff M \models \exists x \left[ \left( \bigwedge_{n \in A} \underline{n} \in_{Ack} x \right) \wedge \left( \bigwedge_{n \in \mathbb{N} \setminus A} \underline{n} \not\in_{Ack} x \right)\right]\] where every such sentence is equivalent to a $\Sinf{2}$ sentence. 
	Now assume $n\geq 2$ and $\mc{M} \equiv_n \mc{N}$, then all $\Sinf{n}$ sentences true of $\mc{M}$ are true of $\mc{N}$ and vice-versa. In particular, this holds for $\Sinf{2}$ and $\Pinf{2}$ sentences and thus $\SSy(\mc{M}) = \SSy(\mc{N})$. Furthermore, by the MRDP theorem the $\Sigma_n$-theory of any model of $\PA$ is equivalent to its $\exists_n$-theory, with the latter being a subset of its $\Sinf{n}$ theory. Thus, $\Th_{\Sigma_n}(\mc{M})=\Th_{\Sigma_n}(\mc{N})$.
    
    On the other hand, suppose that $\SSy(\mathcal{M}) = \SSy(\mathcal{N})$. By induction on $n\geq 1$ we shall show that for all $\bar{a},\bar{b}$,
\[\Sigma_{n}\text{-}\tp_\M(\ba)=\Sigma_{n}\text{-}\tp_\N(\bb)\Longrightarrow (\mc{M},\bar{a}) \equiv_{n} (\mc{N},\bar{b})\]
    For $n=1$ this is true due to \cref{prop:baseCase}. Now suppose that if $$\Sigma_{n}\text{-}\tp_\M(\ba)=\Sigma_{n}\text{-}\tp_\N(\bb)$$ then $(\mc{M},\bar{a}) \equiv_{n} (\mc{N},\bar{b})$ and assume that $\Sigma_{n+1}\text{-}\tp_\M(\ba)=\Sigma_{n+1}\text{-}\tp_\N(\bb)$. We will show that $(\M,\ba)\leq_{n+1}(\N,\bb)$ using the game describing $\leq_{n+1}$ \cite[Section II.6]{MBook}. Say that the $\forall$-player plays $\bar d\in N^{<\omega}$. 
Then the $\Sigma_{n+1}$ type $p(\bar{x},\bar{y})$ of $\bb\bd$ over $\N$ is coded in $\N$ as it is clearly realized (see~\cite[Lemma 12.1]{Kaye}). Since $\SSy(\N)=\SSy(\M)$, that same type is also coded in $\M$. Furthermore, it is consistent in $(\M,\ba)$ by consistency in $(\N,\bb)$ and the assumption. These two facts together imply that $p(\ba,\bar{y})$ is realized in $\M$ (\cite[Lemma 12.2]{Kaye}) and hence there is an element $\bc$ such that $\Sigma_{n}\text{-}\tp_\M(\ba\bc)=\Sigma_{n}\text{-}\tp_\N(\bb\bd)$. By hypothesis $(\M,\ba\bc)\equiv_{n}(\N,\bb\bd)$ and thus $\bc$ is a winning play for the $\exists$-player. The proof that for such $\bb$ we have $(\N,\bb)\leq_{n+1}(\M,\ba)$ is symmetric.

Now to find such $\bb$ consider the type $$p_a = \Sigma_{n-1}\text{-}\tp_\M(\ba) \cup \Pi_{n-1}\text{-}\tp_\M(\ba).$$ Clearly it is a type over $\M$, i.e. $$\text{ for all finite } A \subseteq p_a \text{ we have }\mathcal{M} \models \exists \bar{x} \bigwedge_{\varphi \in A} \varphi(\bar{x}).$$ All such sentences are equivalently $\Sigma_n$ and since $\Th_{\Sigma_{n}}(\mathcal{M}) =\Th_{\Sigma_{n}}(\mathcal{N})$, it follows that they must be true in $\mc{N}$ and so $p_a$ is a type over $\mathcal{N}$.
    
        As $p_a$ is $\Sigma_{n}$ type realized in $\mathcal{M}$, it must also be coded. Since $\SSy(\mathcal{M})=\SSy(\mathcal{N})$, it is also a coded type in $\mathcal{N}$ and thus it is realized, i.e., for some $\overline{b}$ in $\mathcal{N}$ we have $\mathcal{N} \models p_a(\overline{b})$. 
        The argument that for all $\bar{b}\in\mc{N}$ there is $\bar{a}\in\mc{M}$ such that $(\mc{M},\bar{a})\equiv_{n-1}(\mc{N},\bar{b})$ is symmetric.
\end{proof}
The following reformulation of \cref{thm:SsyEqn} might be easiest to remember.
\begin{corollary}\label{cor:typesEquiv}
    For $1 \le n < \omega$, any two pointed models $\M$ and $\N$ of $\PA$ such that $\SSy(\M)=\SSy(\N)$ the following conditions are equivalent
    \begin{itemize}
        \item $\mc{M}\equiv_n \mc{N}$;
        \item $\Th_{\Sigma_n}(\mc{M}) = \Th_{\Sigma_n}(\mc{N}).$
    \end{itemize}
    
\end{corollary}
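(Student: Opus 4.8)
The plan is to obtain this statement as an immediate repackaging of the two results just proved, splitting into the cases $n=1$ and $2\le n<\omega$.

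For $n=1$ there is nothing new to do: \cref{prop:baseCase} already asserts precisely that, for pointed models of $\PA$, we have $\M\equiv_1\N$ if and only if $\Th_{\Sigma_1}(\M)=\Th_{\Sigma_1}(\N)$, and it does so with no hypothesis on standard systems whatsoever. So I would simply quote it. For $2\le n<\omega$ I would invoke \cref{thm:SsyEqn}, which gives $\M\equiv_n\N$ if and only if $\Th_{\Sigma_n}(\M)=\Th_{\Sigma_n}(\N)$ \emph{and} $\SSy(\M)=\SSy(\N)$. Under the standing hypothesis $\SSy(\M)=\SSy(\N)$ the second conjunct is vacuous, so the biconditional collapses to exactly the equivalence claimed. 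It may be worth recording for the reader why the forward implication $\M\equiv_n\N\Rightarrow\Th_{\Sigma_n}(\M)=\Th_{\Sigma_n}(\N)$ holds in general and independently of standard systems: every finitary $\Sigma_n$-sentence is, after the usual prenex manipulations, a $\Sinf{n}$-sentence, hence preserved in both directions by $\equiv_n$; this is the $\SSy$-free half of \cref{thm:SsyEqn} together with \cref{prop:baseCase}.

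Since the statement is only a convenient reformulation, there is no genuine obstacle. The only point requiring a moment's care is bookkeeping: checking that the hypothesis $\SSy(\M)=\SSy(\N)$ is used solely to absorb the standard-system conjunct of \cref{thm:SsyEqn} when $n\ge 2$, and that it is harmless (indeed unused) in the base case $n=1$, which is exactly what lets the corollary be stated uniformly for all $1\le n<\omega$.
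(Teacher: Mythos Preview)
Your proposal is correct and matches the paper's treatment: the corollary is stated as a direct reformulation of \cref{thm:SsyEqn} (with \cref{prop:baseCase} covering the case $n=1$), and the paper offers no additional argument beyond calling it a reformulation. Your case split and the observation that the hypothesis $\SSy(\M)=\SSy(\N)$ simply absorbs the extra conjunct for $n\ge2$ is exactly the intended reading.
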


Recall that $\PA(\mathcal{L})$ denotes the extension of $\PA$ in a finite relational language $\mathcal{L}$ with finitely many constants that contains the full induction scheme for $\mathcal{L}$. The theorem below is an adaptation of \cref{thm:SsyEqn} to such extensions of $\PA$. Note that it is much weaker than in the $\PA$-case, because we do not have the MRDP theorem for expansions of the arithmetical signature.

    \begin{theorem} \label{thm:SsyEqn2} 
    For $2 \le n < \omega$ and any two models $\mathcal{M}$ and $\mathcal{N}$ of $\PA(\mathcal{L})$, the following implications hold:
    \begin{itemize}
        \item $\mathcal{M} \equiv_{\omega} \mathcal{N}\Longrightarrow \Th_{\mathcal{L}}(\mathcal{M}) = \Th_{\mathcal{L}}(\mathcal{N}) \textnormal{ and } \SSy(\mathcal{M}) = \SSy(\mathcal{N})$,
        \item $\Th_{\Sigma_{n}(\mathcal{L})}(\mathcal{M}) = \Th_{\Sigma_{n}(\mathcal{L})}(\mathcal{N}) \textnormal{ and }\SSy(\mathcal{M}) = \SSy(\mathcal{N})\Longrightarrow \mathcal{M} \equiv_{n} \mathcal{N}$.
    \end{itemize}
\end{theorem}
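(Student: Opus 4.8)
The plan is to mirror the proof of \cref{thm:SsyEqn}, isolating the single place where the MRDP theorem was used---the base case of the induction---and replacing it by a one-directional substitute valid in any signature. This is precisely why the conclusions here weaken to implications rather than equivalences, and why the correct hypothesis for the second item is equality of $\Sigma_n(\mathcal L)$-theories, which over $\PA(\mathcal L)$ need no longer coincide with equality of $\exists_n$-theories.

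For the first implication, $\M\equiv_\omega\N$ yields $\M\equiv_n\N$ for every finite $n$ straight from the definition of $\leq_\alpha$ (any clock $\beta<n$ is also a clock $\beta<\omega$). By Karp's theorem the two models then agree on all $\Sinf n$ and $\Pinf n$ sentences, hence on all first-order $\mathcal L$-sentences, so $\Th_{\mathcal L}(\M)=\Th_{\mathcal L}(\N)$. For $\SSy$ I would reuse the characterization established inside the proof of \cref{thm:SsyEqn}: $A\in\SSy(M)$ iff $M\models\exists x\bigl[\bigwedge_{n\in A}\underline n\in_{Ack}x\wedge\bigwedge_{n\notin A}\underline n\notin_{Ack}x\bigr]$, and this is equivalent to a $\Sinf 2$ sentence \emph{over the arithmetic signature alone}; since $\M\equiv_2\N$, the models agree on all such sentences, giving $\SSy(\M)=\SSy(\N)$.

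For the second implication I would prove by induction on $n\ge1$ the pointed statement: if $\SSy(\M)=\SSy(\N)$, then for all tuples $\ba\in\M$, $\bb\in\N$, equality of the $\Sigma_n(\mathcal L)$-types of $\ba$ and $\bb$ forces $(\M,\ba)\equiv_n(\N,\bb)$; the theorem follows by taking $\ba,\bb$ to be the distinguished elements (and for $n=1$ the hypothesis on $\SSy$ is not even used). The base case $n=1$ is the MRDP-free residue of \cref{prop:baseCase}: every $\Sinf 1$ formula is a countable disjunction of purely existential first-order formulas, and purely existential first-order formulas lie in $\Sigma_1(\mathcal L)$, so equality of $\Sigma_1(\mathcal L)$-types already forces agreement on all $\Sinf 1$ and hence, by negation, all $\Pinf 1$ formulas, which is $(\M,\ba)\equiv_1(\N,\bb)$ by Karp's theorem. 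For the step $n\to n+1$, assuming the $\Sigma_{n+1}(\mathcal L)$-types of $\ba$ and $\bb$ agree, I would play the game for $\leq_{n+1}$ exactly as in \cref{thm:SsyEqn}: when the $\forall$-player plays $\bd\in N^{<\omega}$, take $p(\bar x,\bar y)$ to be the $\Sigma_{n+1}(\mathcal L)$-type of $\bb\bd$ over $\N$; it is realized, hence coded, in $\N$, hence coded in $\M$ since $\SSy(\M)=\SSy(\N)$; it is finitely satisfiable over $(\M,\ba)$ because each finite fragment is, after prefixing $\exists\bar y$, a $\Sigma_{n+1}(\mathcal L)$-sentence true of $\bb$ in $\N$ and so, by the type hypothesis, true of $\ba$ in $\M$; being coded and finitely satisfiable, $p(\ba,\bar y)$ is realized in $\M$ by a tuple $\bc$ whose $\Sigma_{n+1}(\mathcal L)$-type with $\ba$ equals that of $\bb\bd$, so by the induction hypothesis $(\M,\ba\bc)\equiv_n(\N,\bb\bd)$ and $\bc$ is a winning response. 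The converse direction is symmetric, yielding $(\M,\ba)\equiv_{n+1}(\N,\bb)$.

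The main obstacle is the coding infrastructure needed in the inductive step over $\PA(\mathcal L)$ rather than $\PA$: that a $\Sigma_{n+1}(\mathcal L)$-type realized in a model is coded there, and that a coded, finitely satisfiable such type is realized. Both follow from the existence of partial satisfaction classes for $\mathcal L$-formulas of bounded complexity---provably total and correct precisely because $\PA(\mathcal L)$ carries full induction for $\mathcal L$---together with the fact that parametrically $\mathcal L$-definable subsets of $\mathbb N$ are coded in models of $\PA(\mathcal L)$. I expect carefully justifying (or locating citations for) these facts in the expanded signature, in place of the purely arithmetic versions invoked in \cref{thm:SsyEqn}, to be where the real work lies; everything MRDP contributed is confined to the $n=1$ base case, and its absence is exactly what blocks upgrading the two implications to the equivalences of \cref{thm:SsyEqn}.
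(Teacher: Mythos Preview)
Your proposal is correct and follows essentially the same approach as the paper: reduce to the proof of \cref{thm:SsyEqn}, replace the MRDP-based base case by the one-sided observation that $\exists_1(\mathcal L)\subseteq\Sigma_1(\mathcal L)$, and note that the coding machinery (realized $\Sigma_n(\mathcal L)$-types are coded, coded consistent ones are realized) survives in $\PA(\mathcal L)$ via partial satisfaction predicates. One small imprecision: the tuple $\bc$ you obtain satisfies the $\Sigma_{n+1}(\mathcal L)$-type of $\bb\bd$, which guarantees that the $\Sigma_n(\mathcal L)$-types of $\ba\bc$ and $\bb\bd$ coincide (since $\Pi_n\subseteq\Sigma_{n+1}$), not that their full $\Sigma_{n+1}(\mathcal L)$-types coincide---but $\Sigma_n$-type equality is exactly what your induction hypothesis requires, so the argument goes through.
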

\begin{proof}
    The first implication follows, since each finitary formula can be seen as a particular infinitary formula, and the proof that the standard systems must be equal does not depend on the additional predicates from $\mathcal{L}$.
    
    The proof of the second implication is exactly as in Theorem \ref{thm:SsyEqn}: in $\PA(\mathcal{L})$ partial truth predicates for $\Sigma_n(\mathcal{L})$-formulae exist (see \cite[Section 1.11]{KS}) and the proof that "coded $\Sigma_n(\mathcal{L})/\Pi_n(\mathcal{L})$ types are realized" and "realized $\Sigma_n(\mathcal{L})/\Pi_n(\mathcal{L})$-types are coded" is as in the $\PA$ case (see \cite[Corollary 1.11.4]{KS}). 
\end{proof}

By taking unions on both sides of the equivalence in \cref{thm:SsyEqn} we obtain the following
\begin{corollary} \label{cor:SsyEq} 
    For any two models $\mathcal{M}$ and $\mathcal{N}$ of $\PA(\mathcal{L})$ the following conditions are equivalent:
    \begin{itemize}
        \item $\M \equiv_\omega \N$,
        \item $\Th(\M) = \Th(\N)$ and $\SSy(\mathcal{M}) = \SSy(\mathcal{N})$.
    \end{itemize}
    
\end{corollary}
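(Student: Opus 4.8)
The plan is to obtain \cref{cor:SsyEq} as an immediate consequence of \cref{thm:SsyEqn2} by intersecting its two one-sided implications over all finite levels $n$, using that the full first-order theory of a model of $\PA(\mathcal{L})$ is the union of its $\Sigma_n(\mathcal{L})$-theories and that $\equiv_\omega$ is exactly the conjunction of the relations $\equiv_n$ over $n<\omega$. This is what the phrase ``taking unions on both sides'' refers to.

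First I would record two routine observations. (i) For structures $\M,\N$ one has $\M\equiv_\omega\N$ if and only if $\M\equiv_n\N$ for every $n<\omega$: unwinding the definition of the back-and-forth relations, a play in the $\leq_\omega$-game lets the $\forall$-player commit to an arbitrary finite remaining clock $\beta$ on the first move, after which the game is precisely a $\leq_\beta$-game, so $\leq_\omega$ amounts to ``$\leq_\beta$ after one move, for every finite $\beta$''; since the family $(\leq_n)_{n<\omega}$ is decreasing ($\leq_\alpha$ implies $\leq_\gamma$ whenever $\gamma\leq\alpha$), this is equivalent to $\leq_n$ holding for all $n$. (ii) In the setting of $\PA(\mathcal{L})$, $\Th_{\mathcal{L}}$ denotes the full first-order theory in the signature of $\PA(\mathcal{L})$, and every such first-order sentence is $\Sigma_n(\mathcal{L})$ for some $n$; hence $\Th(\M)=\Th(\N)$ holds if and only if $\Th_{\Sigma_n(\mathcal{L})}(\M)=\Th_{\Sigma_n(\mathcal{L})}(\N)$ for every $n$. (As a byproduct, every first-order sentence is $\Sigma_n$, hence $\Sinf{n}$, for some $n$, so $\M\equiv_\omega\N$ already forces $\Th(\M)=\Th(\N)$ on its own.)

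For the forward direction, assuming $\M\equiv_\omega\N$, the first implication of \cref{thm:SsyEqn2} gives $\Th_{\mathcal{L}}(\M)=\Th_{\mathcal{L}}(\N)$ and $\SSy(\M)=\SSy(\N)$, i.e.\ $\Th(\M)=\Th(\N)$ and $\SSy(\M)=\SSy(\N)$. For the converse, assume $\Th(\M)=\Th(\N)$ and $\SSy(\M)=\SSy(\N)$. By (ii), $\Th_{\Sigma_n(\mathcal{L})}(\M)=\Th_{\Sigma_n(\mathcal{L})}(\N)$ for every $n\geq 2$, so the second implication of \cref{thm:SsyEqn2} yields $\M\equiv_n\N$ for all $n\geq 2$; by the monotonicity noted in (i) this also covers $n=0,1$, so $\M\equiv_n\N$ for every $n<\omega$, and therefore $\M\equiv_\omega\N$ again by (i). Since each step is either a direct citation of \cref{thm:SsyEqn2} or a definitional unwinding, there is essentially no obstacle; the only point needing a moment's care is observation (i), that $\equiv_\omega$ coincides with the simultaneous holding of all the $\equiv_n$, which is a standard feature of the back-and-forth hierarchy.
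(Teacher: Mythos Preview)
Your proposal is correct and is exactly the intended argument: the paper's ``proof'' is just the one-line remark preceding the corollary, and you have faithfully unpacked what ``taking unions on both sides'' means. If anything, your citation of \cref{thm:SsyEqn2} is more accurate than the paper's own reference to \cref{thm:SsyEqn}, since the corollary is stated for $\PA(\mathcal{L})$ rather than plain $\PA$.
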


\begin{corollary} \label{lem:omega+1Equiv} 
    For any two models $\mathcal{M}$ and $\mathcal{N}$ of $\PA(\mathcal{L})$, the following conditions are equivalent:
    \begin{itemize}
        \item $\mathcal{M} \equiv_{\omega+1} \mathcal{N}$,
        \item $\mathcal{M}$ and $\mathcal{N}$ realize the same types.
    \end{itemize}
\end{corollary}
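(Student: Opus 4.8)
The plan is to reduce everything to \cref{cor:SsyEq} applied to \emph{pointed} models (which are again models of $\PA(\mathcal{L}')$ for $\mathcal{L}'$ the language $\mathcal{L}$ together with finitely many new constants), to Karp's theorem, and to the elementary observation that a finitary formula and its negation are, up to logical equivalence, both $\Pinf{\omega}$ and $\Sinf{\omega}$: a finitary formula has finite quantifier rank, so it lies in $\Sinf{n}\cup\Pinf{n}$ for some $n<\omega$, and $\Sinf{n},\Pinf{n}\subseteq\Pinf{n+1}\cap\Sinf{n+1}\subseteq\Pinf{\omega}\cap\Sinf{\omega}$. I also use throughout that the standard system does not see the distinguished constants, so $\SSy(\M,\bar{a})=\SSy(\M)$ for every tuple $\bar{a}$.

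For the forward direction, I unwind $\M\equiv_{\omega+1}\N$ into the level-$(\omega+1)$ back-and-forth condition, whose strongest instance is: for every $\bar{a}\in\M$ there is $\bar{d}\in\N$ with $(\M,\bar{a})\leq_\omega(\N,\bar{d})$, and symmetrically for every $\bar{d}\in\N$ there is $\bar{a}\in\M$ with $(\N,\bar{d})\leq_\omega(\M,\bar{a})$. By Karp's theorem, $(\M,\bar{a})\leq_\omega(\N,\bar{d})$ implies that every $\Pinf{\omega}$ formula true of $\bar{a}$ in $\M$ is true of $\bar{d}$ in $\N$; applying this to each finitary formula and to its negation yields $\tp^\M(\bar{a})=\tp^\N(\bar{d})$. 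Hence every (complete, finitary) type realized in $\M$ is realized in $\N$, and by symmetry conversely; that is exactly the assertion that $\M$ and $\N$ realize the same types.

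For the converse, assume $\M$ and $\N$ realize the same types. Instantiating with the empty tuple gives $\Th(\M)=\Th(\N)$. Next, $\SSy(\M)=\SSy(\N)$: if $A\in\SSy(\M)$ then the arithmetical type $p_A(x)=\{\underline{n}\in_{Ack}x:n\in A\}\cup\{\underline{n}\notin_{Ack}x:n\notin A\}$ is realized in $\M$, hence by hypothesis in $\N$, so $A\in\SSy(\N)$; symmetry gives equality. Now I verify the level-$(\omega+1)$ back-and-forth condition directly. Given $\bar{d}\in\N$, choose $\bar{c}\in\M$ realizing $\tp^\N(\bar{d})$. Then $(\M,\bar{c})$ and $(\N,\bar{d})$, viewed as models of $\PA(\mathcal{L}')$, have the same complete $\mathcal{L}'$-theory (this is just $\Th(\M)=\Th(\N)$ together with $\tp^\M(\bar{c})=\tp^\N(\bar{d})$) and the same standard system (as $\SSy(\M,\bar{c})=\SSy(\M)=\SSy(\N)=\SSy(\N,\bar{d})$), so \cref{cor:SsyEq} gives $(\M,\bar{c})\equiv_\omega(\N,\bar{d})$, in particular $(\N,\bar{d})\leq_\omega(\M,\bar{c})$, and hence $(\N,\bar{d})\leq_\beta(\M,\bar{c})$ for every $\beta\leq\omega$ by monotonicity of the relations $\leq_\beta$. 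Together with the symmetric argument this is precisely $\M\equiv_{\omega+1}\N$.

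No step is a genuine obstacle; the points to handle with care are the bookkeeping of unwinding $\leq_{\omega+1}$ into a game that closes at level $\omega$ (so that \cref{cor:SsyEq} applies), and the two "for free'' facts — that $\SSy$ ignores parameters, and that membership in $\SSy(\M)$ is witnessed by a type realized in $\M$ — which are exactly what let a single $\bar{c}$ serve for all $\beta\leq\omega$ at once. In contrast to \cref{thm:SsyEqn}, this argument needs no arithmetic beyond what \cref{cor:SsyEq} already packages.
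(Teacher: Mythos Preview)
Your proof is correct and follows essentially the same route as the paper: both directions reduce to \cref{cor:SsyEq} applied to pointed models, with the forward direction being immediate (the paper phrases it as ``realizing a type is $\Sinf{\omega+1}$-expressible'' rather than via the back-and-forth definition and Karp, but this is the same content). Your write-up is somewhat more explicit about the game bookkeeping and about why $\SSy(\M)=\SSy(\N)$ follows from equality of realized types, but the argument is identical in substance.
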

\begin{proof}
    The top-down implication is clear as the satisfiability of a type can be expressed as a $\Sinf{\omega+1}$ sentence. We prove the bottom-up implication. Assume $\mathcal{M}$ and $\mathcal{N}$ realize the same types. Then clearly $\SSy(\mc{M}) = \SSy(\mc{N})$. Also for each $\bar{a}\in \mc{M}$ there is $\bar{b}\in \mc{N}$ such that $\Th(\mc{M},\bar{a}) = \Th(\mc{N},\bar{b})$. By the previous corollary we obtain that for every $\bar{a}\in M$ there is $\bar{b}\in N$ such that $(\mc{M},\bar{a})\equiv_{\omega}(\mc{N}, \bar{b})$.
			In particular, it follows that $\Sinf{\omega+1}-\Th(\mc{M})\subseteq \Sinf{\omega+1}-\Th(\mc{N})$. The other direction is proved symmetrically.
\end{proof}

\subsection{Characterization of prime and finitely generated models using Scott analysis}

In this section we assume that whenever $\mathcal{M}\models \PA(\mc{L})$, then a type ($\Sigma_n$-type, $\Pinf{\alpha}$-type, etc.) of an element $a\in M$ mean its type in $\mathcal{L}$ and we omit the reference to $\mc{L}$.

The following theorem directly answers Question 1.

\begin{theorem}\label{thm_srundefinMOPA}
    Let $\mathcal{M}\models \PA(\mathcal{L})$ and let $b\in \mathcal{M}$ be a non-definable element in $\mathcal{M}$. Then the Scott rank of $\mathcal{M}$ is strictly greater than $\omega$.
\end{theorem}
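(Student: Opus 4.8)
The plan is to show that if $\mathcal{M}\models\PA(\mathcal{L})$ has a non-definable element $b$, then no $\Sinf{\omega}$-formula can isolate the automorphism orbit of $b$, and more importantly that $\mathcal{M}$ cannot have a $\Pinf{\omega+1}$ Scott sentence; by Montalbán's characterization (recalled in the introduction) this forces $\SR(\mathcal{M})>\omega$. The key is \cref{cor:typesEquiv}: if $\SSy(\mathcal{M})=\SSy(\mathcal{N})$ then $\mathcal{M}\equiv_n\mathcal{N}$ is equivalent to agreement of $\Sigma_n$-theories (in the pointed sense), and consequently $\equiv_\omega$ reduces to agreement of first-order theories plus standard systems (\cref{cor:SsyEq}). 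Since $\equiv_\omega$ coincides with $\leq_\omega$-equivalence, which by Karp's theorem means agreement of all $\Sinf{\omega}$-types, the slogan is that over a model of $\PA$, $\Sinf{\omega}$-types of tuples are determined by their finitary first-order types together with the (fixed) standard system.

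First I would take $b\in M$ non-definable and produce $b'\in M$ with $b'\neq b$ but $\tp(b)=\tp(b')$ in the first-order sense; such a $b'$ exists precisely because $b$ is not definable (if every element realizing $\tp(b)$ equaled $b$, then $b$ would be definable by $\tp(b)$, using that a realized type in a model of $\PA$ is coded and hence its realizers form a definable set — or more simply, a non-definable element's type is realized by infinitely many elements, or at least by a second one, by an automorphism/compactness-style argument inside the standard system). Then $(\mathcal{M},b)$ and $(\mathcal{M},b')$ have the same first-order theory and obviously the same standard system, so by \cref{cor:SsyEq} we get $(\mathcal{M},b)\equiv_\omega(\mathcal{M},b')$, i.e. $b\equiv_\omega b'$ in $\mathcal{M}$. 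Hence $b$ and $b'$ satisfy exactly the same $\Sinf{\omega}$-formulas, so the $\Sinf{\omega}$-type of $b$ is not a complete description of its orbit — $b$ has no $\Sinf{\omega}$-definable automorphism orbit. This already shows $\SR(\mathcal{M})>\omega$ in the "internal" sense, but I want to be careful: having Scott rank $\leq\omega$ means having a $\Pinf{\omega+1}$ Scott sentence, equivalently every automorphism orbit is $\Sinf{\omega}$-definable. So the step above, applied to the orbit of $b$, directly contradicts $\SR(\mathcal{M})\leq\omega$ — \emph{provided} $b$ and $b'$ are genuinely in different automorphism orbits, or provided we only need non-$\Sinf{\omega}$-definability of the orbit rather than two distinct orbits.

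To make this airtight I would argue at the level of orbits directly: suppose toward a contradiction that $\SR(\mathcal{M})\leq\omega$, so the orbit of $b$ is $\Sinf{\omega}$-definable by some $\phi(x)$. Since $b\equiv_\omega b'$, also $\mathcal{M}\models\phi(b')$, so $b'$ is in the orbit of $b$; but $b'$ ranged over all elements realizing $\tp(b)$, so the orbit of $b$ is exactly $\{x: \tp(x)=\tp(b)\}$, which is a \emph{definable} set (realized types in models of $\PA(\mathcal{L})$ are coded, and "realizing the coded type $c$" is first-order; alternatively $\tp(b)$ is principal over the elementary diagram, making its solution set definable). A definable orbit means $b$ is fixed by every automorphism... no — it means the orbit is definable, not a singleton. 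The genuinely correct contradiction: the orbit being definable and equal to the full type-set means the type is \emph{algebraic or principal}, and one then checks this contradicts non-definability of $b$ — if the orbit equals a definable set $D$ and $D$ is infinite we can still have non-definable $b$, so I must instead exploit that $\PA$ has definable Skolem functions. The hard part will be exactly this: closing the gap between "$b$ has no $\Sinf{\omega}$-definable orbit" and the structural conclusion, which I expect to handle by iterating — using that if the orbit of $b$ were $\Sinf{\omega}$-definable, one could run the argument over the parameter $b'$ as well and leverage that in $\PA$ one can always find, inside the gap of $b$, a further element $b''$ first-order-indistinguishable from $b$ over suitable parameters, producing arbitrarily deep back-and-forth equivalences. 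So the main obstacle is organizing this iteration cleanly; the engine, \cref{cor:typesEquiv}/\cref{cor:SsyEq} reducing $\equiv_\omega$ to first-order data, is already in hand.
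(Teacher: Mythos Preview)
Your approach has a genuine gap. You produce $b'\neq b$ with the \emph{same} full first-order type as $b$, conclude $(\mathcal{M},b)\equiv_\omega(\mathcal{M},b')$, and then want to say the orbit of $b$ is not $\Sinf{\omega}$-definable. But elements with the same type may perfectly well lie in the same automorphism orbit --- indeed they do in any homogeneous model, and homogeneous non-prime models of $\PA$ abound (e.g.\ recursively saturated ones). So finding such a $b'$ proves nothing about the orbit. You notice this yourself and try to salvage it by arguing that the orbit then equals the set of realizers of $\tp(b)$; but that conclusion is entirely consistent with Scott rank $\omega$ (it just says the model is homogeneous on this type), and your attempts to extract a contradiction from ``definable orbit'' or ``principal type'' do not go through. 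The ``iteration'' you sketch at the end has no clear mechanism.

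The missing idea, and what the paper does, is the opposite move: for each fixed $n$ you want an element $c$ with the same $\Sigma_n$-type as $b$ but a \emph{different} full type, so that $c$ is certainly in a different orbit while still $\Sinf{n}$-indistinguishable from $b$ (via \cref{thm:SsyEqn2}). The tool for guaranteeing different types is the Ehrenfeucht Lemma: any two distinct elements of $K(b)$ have distinct types. So the paper works inside $K(b)$, observes that the $\Sigma_{n+1}$-type of $b$ is coded there, and hence that the coded type $\{x\neq b\}\cup\Sigma_{n+1}\text{-}\tp(b)$ is realized in $K(b)$ by some $c\neq b$; now $c$ and $b$ share $\Sigma_n$-type but differ in full type, and the orbit of $b$ cannot be $\Sinf{n}$-definable. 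Doing this for every $n$ gives the result. Note also that your first step --- finding $b'\neq b$ realizing the \emph{full} type of $b$ --- is itself not cleanly justified: that is an unbounded, non-recursive type over the parameter $b$, and you cannot invoke coded-types-are-realized for it without more care; working level-by-level as the paper does sidesteps this.
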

\begin{proof}
    Let $\mc{M},b$ be as in the statement of the theorem. We claim that the automorphism orbit of $b$ in $\mc{M}$ is not $\Sigma^{in}_{\omega}$-definable. Clearly, it is enough to show that for each $n$ the orbit is not $\Sigma_n^{in}$-definable. Fix $n$. By \cref{thm:SsyEqn2} it is enough to find $c\in M$ such that $\Sigma_{n}\text{-}\tp_{\mc{M}}(c) = \Sigma_n\text{-}\tp_{\mc{M}}(b)$, but $tp_{\mc{M}}(b)\neq tp_{\mc{M}}(c)$. By the Ehrenfeucht Lemma (see \cite[Theorem 1.7.2]{KS}) it is enough to find $c\neq b$ in $K(b)$ such that $\Sigma_{n}\text{-}\tp_{K(b)}(c) = \Sigma_n\text{-}\tp_{K(b)}(b)$. Work in $K(b)$. Since $b$ is not definable in $\mc{M}$ (hence by elementarity also in $K(b)$) the following set of formulae is a type over $K(b)$
    \[p(x,b):=\{x\neq b \wedge \phi(x):\phi(x)\in\Sigma_{n+1}, K(b)\models \phi(b)\}.\]
    Moreover, since $\Sigma_{n+1}\text{-} tp_{K(b)}(b)$ is coded, so is $p(x,b)$ as a finite extension of it. Hence, $p(x,b)$ is realized in $K(b)$ and consequently there is an element $c\in K(b)$ such that $\Sigma_{n}\text{-}\tp_{K(b)}(c) = \Sigma_n\text{-}\tp_{K(b)}(b)$ and $c\neq b$. 
\end{proof}

\begin{corollary}\label{cor:prime}
    Assume that $\mc{M}\models \PA(\mc{L})$. Then the following holds
    \begin{itemize}
        \item The Scott rank of $\mc{M}$ is $\omega$ iff $\mc{M}$ is a prime model of (a completion of) $\PA(\mc{L})$.
        \item The parametrized Scott rank of $\mc{M}$ is $\omega$ iff $\mc{M}$ is finitely generated.
    \end{itemize}
\end{corollary}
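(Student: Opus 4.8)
The plan is to read both equivalences off \cref{thm_srundefinMOPA}, using two classical facts about models of arithmetic together with the known bound that nonstandard models of arithmetic have Scott rank at least $\omega$. The classical facts are: a countable model of $\PA(\mc{L})$ is prime precisely when it is pointwise definable (equivalently, atomic), and it is finitely generated precisely when it is pointwise definable over a single parameter, i.e.\ $\mc{M}=K(a)$ for some $a\in M$. Both are standard (see~\cite{KS}); the first uses that in $\PA(\mc{L})$ an isolated type is always realized by a definable element -- pass to the least witness of an isolating formula, which is again definable and must realize the same type.

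For the first item, the forward direction is \cref{thm_srundefinMOPA} in contrapositive form: if $\SR(\mc{M})=\omega$ then $\mc{M}$ has no non-definable element, so every tuple is first-order definable, every type realized in $\mc{M}$ is isolated, and $\mc{M}$ is atomic, hence -- being countable -- prime. Conversely, if $\mc{M}$ is prime then it is pointwise definable, so the automorphism orbit of every tuple $\bar a$ is the singleton $\{\bar a\}$, which is defined by the conjunction of the (finitary) defining formulas of its entries; that conjunction is $\Sinf{\omega}$, so $\SR(\mc{M})\le\omega$, and the rank is exactly $\omega$ because a nonstandard model of arithmetic has Scott rank at least $\omega$ by (the $\PA(\mc{L})$-analogue of) \cref{thm:monross}.

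For the second item one passes to the pointed model $(\mc{M},a)$, to which \cref{thm_srundefinMOPA} applies verbatim since $(\mc{M},a)\models\PA(\mc{L}\cup\{a\})$. If $\SR_p(\mc{M})=\omega$, then by Montalb\'an's parameter characterization of parametrized Scott rank there is $a$ with $\SR(\mc{M},a)=\omega$, and applying \cref{thm_srundefinMOPA} to $(\mc{M},a)$ shows every element of $\mc{M}$ is definable from $a$, i.e.\ $\mc{M}=K(a)$ is finitely generated. Conversely, if $\mc{M}=K(a)$ then $(\mc{M},a)$ is pointwise definable, so by the first item $\SR(\mc{M},a)=\omega$ and hence $\SR_p(\mc{M})\le\omega$, again with equality because naming finitely many parameters in a nonstandard model leaves the Scott rank at least $\omega$.

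Once \cref{thm_srundefinMOPA} and Montalb\'an's characterization are in hand, the argument is essentially bookkeeping; the one point requiring care is the lower bound $\SR\ge\omega$ for nonstandard models, and the concomitant fact that the equivalences are to be read for nonstandard $\mc{M}$ -- the standard model and its definable expansions are atomic, prime, and (trivially) finitely generated yet have finite Scott rank, and so form the sole family of exceptions to both biconditionals. That is the step I expect to be the main obstacle; everything else is a direct combination of the results already in hand with the classical model theory of $\PA(\mc{L})$.
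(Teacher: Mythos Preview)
Your argument is correct and matches the paper's (implicit) approach: the corollary is stated there without proof, as immediate from \cref{thm_srundefinMOPA}. You are in fact more careful than the paper on two points---you rightly note that the lower bound $\SR\ge\omega$ for nonstandard $\mc{M}\models\PA(\mc{L})$ must be supplied (and it does follow, by the same elementary end-extension argument used for \cref{cor_noPiomega} together with \cref{thm:SsyEqn2}, though not literally from \cref{thm:monross}), and you correctly observe that the standard model and its definable $\mc{L}$-expansions are exceptions to both biconditionals as literally stated.
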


\subsection{Missing complexities}

The proposition below directly answers Question 3 from \cite{MR23}.

\begin{proposition}\label{cor_noPiomega}
	No model of $\mathrm{PA}(\mathcal{L})$ admits a $\Pi_\omega^{in}$ Scott sentence.
\end{proposition}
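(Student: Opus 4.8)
The plan is to argue by contradiction, first reducing to the prime model. Suppose $\mc{M}\models\PA(\mc{L})$ has Scott complexity $\Pinf{\omega}$, and let $\varphi$ be a $\Pinf{\omega}$ Scott sentence for it. (This is what the statement really excludes: a $\Pinf{2}$ Scott sentence is formally a $\Pinf{\omega}$ one, so what has to be ruled out is that $\Pinf{\omega}$ is a model's \emph{optimal} complexity.) Since $\Pinf{\omega}\subseteq\Pinf{\omega+1}$, the sentence $\varphi$ already witnesses $\SR(\mc{M})\le\omega$; and since a model whose Scott rank is finite has Scott complexity at a finite level, in fact $\SR(\mc{M})=\omega$ --- this is the limit-level part of the dictionary between Scott complexity and Scott rank (\cref{thm:internalSC}, cf.\ \cite[Table 1]{MBook}). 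Now \cref{thm_srundefinMOPA} applies contrapositively: as $\SR(\mc{M})$ is not strictly above $\omega$, every element of $\mc{M}$ is $\mc{L}$-definable, so $\mc{M}$ is pointwise definable and is therefore the prime (equivalently atomic) and finitely generated model of $T:=\Th(\mc{M})$ --- cf.\ \cref{cor:prime}. Being finitely generated, $\mc{M}$ is rigid (the $\PA(\mc{L})$-analogue of \cite[Lemma 1.7.3]{KS}); and $\mc{M}$ must be nonstandard, since any pointwise-definable expansion of the standard model is rigid and has Scott rank $1$, not $\omega$.

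The contradiction comes from comparing $\mc{M}$ with a recursively saturated model sharing its first-order theory and standard system. Since $\mc{M}$ is nonstandard, $\SSy(\mc{M})$ is a countable Scott set containing $T$, so by the usual realization theorem for standard systems (see \cite{KS}; it carries over to $\PA(\mc{L})$ since partial truth predicates for the $\Sigma_n(\mc{L})$-formulas are available, as used in the proof of \cref{thm:SsyEqn2}) there is a countable recursively saturated $\mc{N}\models T$ with $\SSy(\mc{N})=\SSy(\mc{M})$. By \cref{cor:SsyEq} this gives $\mc{M}\equiv_{\omega}\mc{N}$, in particular $\mc{M}\le_{\omega}\mc{N}$, so by \cite{Karp} the $\Pinf{\omega}$ sentence $\varphi$, being true in $\mc{M}$, is true in $\mc{N}$; since $\varphi$ is a Scott sentence of $\mc{M}$ it follows that $\mc{N}\cong\mc{M}$. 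But $\mc{M}$ is rigid whereas a countable recursively saturated model of $\PA(\mc{L})$ is not (the argument for $\PA$ in \cite{KS} adapts to the expanded language), so $\mc{N}\not\cong\mc{M}$ --- the desired contradiction.

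The hard part is the reduction in the first paragraph: everything hinges on knowing that a non-pointwise-definable model of $\PA(\mc{L})$ has Scott rank strictly above $\omega$, which is exactly \cref{thm_srundefinMOPA}; without it there is no leverage to force the hypothetical $\mc{M}$ to be the prime model. After that the argument is soft model theory of arithmetic: a $\Pinf{\omega}$ sentence is invariant under $\equiv_{\omega}$, and by \cref{cor:SsyEq} the relation $\equiv_{\omega}$ cannot separate the rigid prime model from a non-rigid recursively saturated model with the same theory and standard system. I would also record the by-product, via \cref{thm:internalSC}, that a prime nonstandard model of $\PA(\mc{L})$ must then carry an unstable $\omega$-sequence and hence have Scott complexity exactly $\Pinf{\omega+1}$; this underlies the ``atomic models'' entry of the complexity table.
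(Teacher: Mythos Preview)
Your reduction to the prime model via \cref{thm_srundefinMOPA} is exactly right, and the overall strategy of exhibiting some $\mc{N}\equiv_\omega\mc{M}$ with $\mc{N}\not\cong\mc{M}$ is the correct one. The gap is in your choice of $\mc{N}$. You assert that $\SSy(\mc{M})$ is a Scott set \emph{containing $T=\Th(\mc{M})$}, and then invoke the realization theorem to obtain a recursively saturated $\mc{N}\models T$ with $\SSy(\mc{N})=\SSy(\mc{M})$. But for a pointwise definable $\mc{M}$ one has $T\notin\SSy(\mc{M})$: if some $c\in M$ coded $T$, then $c$ would be definable (every element is), and the formula expressing ``the unique element satisfying the definition of $c$ codes $\varphi$'' would be a truth definition for $\Th(\mc{M})$ inside $\mc{M}$, contradicting Tarski. (This is the same mechanism as in the proof of \cref{prop:propshortrecsat}.) Since every recursively saturated model has its theory in its standard system, there is no recursively saturated $\mc{N}$ with $\Th(\mc{N})=T$ and $\SSy(\mc{N})=\SSy(\mc{M})$; your intended contradiction ``$\mc{M}$ rigid, $\mc{N}$ not'' never gets off the ground.

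The paper avoids this by taking $\mc{N}$ to be a proper elementary end-extension of $\mc{M}$ (MacDowell--Specker, which holds for $\PA(\mc{L})$). Then $\Th(\mc{N})=\Th(\mc{M})$ trivially, and for nonstandard $\mc{M}$ one has $\SSy(\mc{N})=\SSy(\mc{M})$, so \cref{cor:SsyEq} yields $\mc{N}\equiv_\omega\mc{M}$; yet $\mc{N}$ contains undefinable elements and hence $\mc{N}\not\cong\mc{M}$. Your explicit treatment of the standard case (ruling it out because a pointwise definable $\omega$-model has Scott rank $1$) is a nice touch that the paper leaves implicit, and it is genuinely needed since $\SSy$ is not preserved under end-extensions of $\omega$-models. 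If you simply replace your recursively saturated $\mc{N}$ by an elementary end-extension, your argument goes through.
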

\begin{proof}
	By \cref{thm_srundefinMOPA} if $\mc{M}$ is a model of $\PA(\mathcal{L})$ with a $\Pinf{\omega}$ Scott sentence, then $\mc{M}$ must consists only of $\mc{L}$-definable elements. It is well-known that for any elementary end-extension $\N$, $\SSy(\M)=\SSy(\N)$. Let $\mc{N}$ be any proper elementary end-extension of $\mc{M}$. In particular, in $\mc{N}$ not every element is $\mc{L}$-definable, and hence $\mc{M}$ and $\mc{N}$ are not isomorphic. However, by \cref{cor:SsyEq}, $\N \equiv_\omega \M$ and hence if $\M$ had Scott complexity $\Pinf{\omega}$, then $\N$ would have to be isomorphic to $\M$, a contradiction.
\end{proof}

\begin{corollary}\label{cor_noSigmaomega+1}
    No model of $\PA(\mathcal{L})$ admits a $\Sinf{\omega+1}$ Scott sentence.
\end{corollary}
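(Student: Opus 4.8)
The plan is to deduce this from the already-established impossibility of Scott complexity $\Pinf{\omega}$ for models of $\PA(\mc L)$ (\cref{cor_noPiomega}), using the characterization of finitely generated models (\cref{cor:prime}) together with the internal description of Scott complexity recorded in \cref{thm:internalSC}. (As in Table~\ref{table:results}, I read ``admits a $\Sinf{\omega+1}$ Scott sentence'' as ``has Scott complexity $\Sinf{\omega+1}$''.) So I would suppose toward a contradiction that $\mc M\models\PA(\mc L)$ has Scott complexity $\Sinf{\omega+1}$. By \cref{thm:internalSC}, the only way a structure can realize this complexity is that $\SR_p(\mc M)=\omega$, $\SR(\mc M)=\omega+1$, and $\mc M$ has \emph{no} unstable $\omega$-sequence over parameters --- the presence of such a sequence is exactly what separates Scott complexity $\Sinf{\omega+1}$ from $\dSinf{\omega+1}$. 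The whole proof will consist of contradicting this last clause.

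First I would unpack $\SR_p(\mc M)=\omega$: by Montalbán's characterization of parametrized Scott rank there is a tuple $\bar c$ from $\mc M$ with $\SR(\mc M,\bar c)=\omega$. Now $(\mc M,\bar c)$ is a model of $\PA(\mc L\cup\{\bar c\})$, since adding finitely many constants preserves full induction: each induction instance for an $\mc L\cup\{\bar c\}$-formula is a substitution instance of a parametrized induction axiom of $\PA(\mc L)$. Because $\SR(\mc M,\bar c)=\omega$, \cref{cor:prime} applied to $\PA(\mc L\cup\{\bar c\})$ says that $(\mc M,\bar c)$ is a prime model of its theory; being prime it is finitely generated, so the parametrized half of \cref{cor:prime} gives $\SR_p(\mc M,\bar c)=\omega$ as well. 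Thus $(\mc M,\bar c)$ is a model of an extension of $\PA$ with $\SR(\mc M,\bar c)=\SR_p(\mc M,\bar c)=\omega$.

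Next I would invoke \cref{cor_noPiomega} for $(\mc M,\bar c)$: it does not have Scott complexity $\Pinf{\omega}$. But by \cref{thm:internalSC}, a structure with $\SR=\SR_p=\omega$ has Scott complexity either $\Pinf{\omega}$ or $\Pinf{\omega+1}$, the two being separated precisely by whether it carries an unstable $\omega$-sequence. Hence $(\mc M,\bar c)$ has Scott complexity $\Pinf{\omega+1}$, and therefore it does carry an unstable $\omega$-sequence. Unwinding the definition of the back-and-forth relations, an unstable $\omega$-sequence in the pointed structure $(\mc M,\bar c)$ is literally an unstable $\omega$-sequence over the parameters $\bar c$ in $\mc M$ --- exactly the object whose non-existence we derived in the first step. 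This is the desired contradiction, and since \cref{cor:prime}, \cref{cor_noPiomega}, and \cref{thm:internalSC} all hold over an arbitrary $\PA(\mc L)$, the same argument applies verbatim here.

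I expect the substantive content to be entirely in the three cited results; the remaining work is bookkeeping that must nonetheless be carried out carefully. The points to check are: that $(\mc M,\bar c)\models\PA(\mc L\cup\{\bar c\})$, so that \cref{cor:prime} and \cref{cor_noPiomega} are genuinely available in the expanded language; that $\SR_p(\mc M,\bar c)$ really equals $\omega$ and not something smaller, so that we are in the ``$\SR=\SR_p=\omega$'' branch of \cref{thm:internalSC} (this also follows abstractly from $\SR\le\SR_p+2$ once $\SR(\mc M,\bar c)=\omega$); and that naming $\bar c$ as constants versus carrying it as a parameter tuple yields the very same relations $\equiv_\alpha$, so that the unstable sequence transfers back to $\mc M$ without change. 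The place I would be most cautious about is the direction of the ``distinguished by'' clauses in \cref{thm:internalSC}: I am using that the presence of an unstable $\omega$-sequence pushes the complexity to the larger of the two candidates ($\Pinf{\omega+1}$ over $\Pinf{\omega}$, and $\dSinf{\omega+1}$ over $\Sinf{\omega+1}$), which is the reading consistent with the rest of the paper.
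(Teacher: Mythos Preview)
Your proof is correct and follows the same approach as the paper's: reduce to \cref{cor_noPiomega} by passing to a pointed model. The paper simply asserts in one line that Scott complexity $\Sinf{\omega+1}$ yields a parameter $a$ with $(\mc M,a)$ having a $\Pinf{\omega}$ Scott sentence and then immediately contradicts \cref{cor_noPiomega}; you unpack this implication via the unstable-sequence characterization in \cref{thm:internalSC}, and your detour through \cref{cor:prime} to obtain $\SR_p(\mc M,\bar c)=\omega$ is valid but unnecessary since, as you yourself note, it already follows from $\SR\le\SR_p+2$ and the fact that $\omega$ is a limit.
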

\begin{proof}
	If $\mc{M}$ has Scott complexity $\Sinf{\omega+1}$, then there is $a\in M$, such that $(\mc{M},a)$ has a $\Pinf{\omega}$ Scott sentence. By expanding $\mathcal{L}$ with a fresh constant, we can assume that $(\mc{M},a)\models \PA(\mathcal{L})$ and has a $\Pinf{\omega}$ Scott sentence, a contradiction with \cref{cor_noPiomega}.
    
\end{proof}
\begin{proposition}\label{prop_fingenchar}
  Models of $\PA(\mathcal{L})$ of Scott complexity $\dSinf{\omega+1}$ are exactly the finitely generated models which contain $\mathcal{L}$-undefinable elements.
\end{proposition}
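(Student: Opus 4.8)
The plan is to prove the two inclusions separately, using the characterization of Scott complexity in terms of Scott rank and parametrized Scott rank together with the equivalence relations established in the previous subsections. First I would recall from the analysis of \cite{AGNHTT} (see \cite[Table 1]{MBook}) that a structure $\mc{M}$ has Scott complexity $\dSinf{\omega+1}$ exactly when $\SR_p(\mc{M})=\omega$ while $\SR(\mc{M})=\omega+1$ — that is, $\mc{M}$ has a parameter making its Scott rank $\omega$, but $\mc{M}$ itself does not have Scott rank $\omega$. By \cref{cor:prime}, $\SR_p(\mc{M})=\omega$ is equivalent to $\mc{M}$ being finitely generated, and $\SR(\mc{M})=\omega$ is equivalent to $\mc{M}$ being prime. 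So the statement reduces to: $\mc{M}$ has Scott complexity $\dSinf{\omega+1}$ iff $\mc{M}$ is finitely generated but not prime, and I would then observe that a finitely generated model is prime (over the empty tuple, in the expanded language where generators are constants) iff it has no $\mc{L}$-undefinable elements.

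For the forward direction, suppose $\mc{M}$ has Scott complexity $\dSinf{\omega+1}$. Then $\SR_p(\mc{M})=\omega$, so by \cref{cor:prime} $\mc{M}$ is finitely generated, say by $\bar a$; and $\SR(\mc{M})=\omega+1>\omega$, so $\mc{M}$ is not prime, meaning it has a non-definable element $b$. (In fact one must be slightly careful: "prime" here should be read as in \cref{cor:prime}, and a finitely generated model all of whose elements are $\mc{L}$-definable is prime, so the existence of an $\mc{L}$-undefinable element is exactly the obstruction to primeness for a finitely generated model.) For the converse, suppose $\mc{M}$ is finitely generated and contains an $\mc{L}$-undefinable element. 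Finitely generated gives $\SR_p(\mc{M})=\omega$ by \cref{cor:prime}; the undefinable element $b$ forces, via \cref{thm_srundefinMOPA}, that $\SR(\mc{M})>\omega$. Now I need the complementary bound $\SR(\mc{M})\le\omega+1$: since $\mc{M}$ is finitely generated by some tuple $\bar c$, expanding $\mc{L}$ by constants naming $\bar c$ makes $(\mc{M},\bar c)$ a prime model of $\PA(\mc{L}')$, which by \cref{cor:prime} has Scott rank $\omega$, i.e. $(\mc{M},\bar c)$ has a $\Pinf{\omega+1}$ Scott sentence; hence $\mc{M}$ has parametrized Scott rank $\omega$, so $\SR(\mc{M})\le\omega+2$. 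To rule out $\SR(\mc{M})=\omega+2$ — which would make the Scott complexity $\Sinf{\omega+2}$ — I would invoke \cref{cor_noSigmaomega+2}: no model of $\PA(\mc{L})$ has a $\Sinf{\omega+2}$ Scott sentence. Combined with $\SR(\mc{M})>\omega$ and $\SR_p(\mc{M})=\omega$, the only remaining possibility in \cite[Table 1]{MBook} is $\SR(\mc{M})=\omega+1$ with Scott complexity $\dSinf{\omega+1}$.

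The main obstacle I anticipate is the bookkeeping around \cref{cor:prime} and the passage between "$\mc{M}$ is prime" and "every element of $\mc{M}$ is $\mc{L}$-definable" for finitely generated $\mc{M}$: one must track the distinction between the language $\mc{L}$ and its expansion by constants naming the generators, and confirm that \cref{thm_srundefinMOPA}, \cref{cor:prime}, and \cref{cor_noSigmaomega+2} are all being applied in the correct language. Concretely, a finitely generated model is automatically a model of $\PA(\mc{L}')$ with $\mc{L}'$ the expansion by its generators, it is prime as such a model precisely when it is pointwise $\mc{L}'$-definable, and this is the same as $\mc{L}$-definability of every element together with $\mc{L}$-definability of the generators — the latter being automatic only if we do not insist the generators be non-definable. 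Once this is set up carefully, each inclusion is a short argument combining \cref{cor:prime}, \cref{thm_srundefinMOPA}, \cref{cor_noSigmaomega+2}, and the table from \cite{MBook}.
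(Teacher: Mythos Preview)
Your argument contains a circularity: you invoke \cref{cor_noSigmaomega+2} to rule out the possibility that the Scott complexity is $\Sinf{\omega+2}$, but in the paper \cref{cor_noSigmaomega+2} is proved \emph{using} \cref{prop_fingenchar} (its proof says: a model of Scott complexity $\Sinf{\omega+2}$ has parametrized Scott rank $\omega$, hence is finitely generated by \cref{cor:prime}, hence has Scott complexity $\dSinf{\omega+1}$ by \cref{prop_fingenchar}, contradiction). So you cannot appeal to it here.

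The paper avoids this by giving an explicit $\dSinf{\omega+1}$ Scott sentence for a finitely generated $\mc{M}=K(a)$: roughly, ``there exists an element of type $p=\tp(a)$'' conjoined with ``every element is definable from any realizer of $p$''. Your rank-based approach can also be repaired without circularity: in a finitely generated model of $\PA(\mc{L})$, the Ehrenfeucht Lemma gives that distinct elements have distinct types, so the model is rigid and every automorphism orbit is a singleton defined by a $\Pinf{\omega}$ formula (the full type), hence $\Sinf{\omega+1}$-definable; this yields $\SR(\mc{M})\le\omega+1$ directly. Combined with $\SR(\mc{M})>\omega$ from \cref{thm_srundefinMOPA} and $\SR_p(\mc{M})=\omega$ from \cref{cor:prime}, the only remaining complexities are $\Sinf{\omega+1}$ and $\dSinf{\omega+1}$, and \cref{cor_noSigmaomega+1} (which does \emph{not} depend on \cref{prop_fingenchar}) eliminates the former. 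Your concerns about the language bookkeeping are largely unnecessary: for models of $\PA(\mc{L})$, ``prime'' in \cref{cor:prime} already means exactly ``every element is $\mc{L}$-definable'', independently of finite generation.
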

\begin{proof}
    The only possibility for a structure $\mc{M}$ to have Scott complexity $\dSinf{\omega+1}$ is when its parameterized Scott rank is $\omega$ and its parameterless rank is $\omega+1$. Hence, by \cref{thm_srundefinMOPA} and \cref{cor:prime} if $\mc{M}$ is a model of $\PA(\mathcal{L})$, such $\mc{M}$ must be finitely generated and contain $\mathcal{L}$-undefinable elements.

		To see that every finitely generated model has a $\dSinf{\omega+1}$ Scott sentence, we will explicitly give one for a fixed finitely generated model $\mc{M}\models \PA(\mathcal{L})$. Let $p$ be the type of any element $a$ such that $\mc{M} = K(a)$ and for $b\in \M$ let $\psi_b$ be the formula so that $\psi_b(x,a)$ defines $b$ in $\mc M$. Let $\theta$ be the sentence
    \[\bigl(\exists x \bigwedge_{\phi(x)\in p}\phi(x)\bigr) \wedge \forall x \forall y \bigl(\bigwedge_{\phi(x)\in p}\phi(x)\rightarrow \bigvee_{\psi_b:b\in M} \psi_b(x,y)\wedge \forall z (\psi_b(x,z)\rightarrow z=y)\bigr).\]
    We claim that $\theta$ is a Scott sentence for $\mc{M}$. Take any $\mc{N}\models \theta$ and consider $b\in N$ which realizes $p$ (such an element exists by the first conjunct of $\theta$, we may assume that $\mc{N}\models \PA(\mathcal{L})$). By the second conjunct of $\theta$, every element of $N$ is first-order definable from $b$. Hence in particular, by the Ehrenfeucht Lemma, no element of $N\setminus\{b\}$ satisfies $p$. So there is an isomorphism between $\mc{M}$ and $\mc{N}$ that is uniquely determined by sending $a$ to $b$. 
    By \cref{cor_noSigmaomega+1} and \cref{thm_srundefinMOPA}, if $\mc{M}$ is not prime, then it can have neither a $\Sinf{\omega+1}$ nor a $\Pinf{\omega+1}$ Scott sentence. So $\theta$ is actually a Scott sentence for $\mc{M}$ of least possible complexity.
\end{proof}

\begin{corollary}\label{cor_noSigmaomega+2}
    No model of $\PA(\mathcal{L})$ has Scott complexity $\Sinf{\omega+2}$.
\end{corollary}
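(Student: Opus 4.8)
The plan is to read off the required values of the Scott rank and parametrized Scott rank from the hypothesis and then invoke the characterizations already obtained. First I would note that, by the case analysis carried out in the proof of \cref{thm:internalSC}, a structure has Scott complexity $\Sinf{\lambda+2}$ only in the situation $\SR_p(\M)=\lambda$ and $\SR(\M)=\lambda+2$. Hence, if some $\mc{M}\models\PA(\mathcal{L})$ had Scott complexity $\Sinf{\omega+2}$, then necessarily $\SR_p(\M)=\omega$ and $\SR(\M)=\omega+2$. (One can also argue this step by hand: a $\Sinf{\omega+2}$ Scott sentence gives $\SR_p(\M)\le\omega$, and $\SR_p(\M)=n<\omega$ would yield a $\Sinf{n+2}$ Scott sentence, strictly simpler than $\Sinf{\omega+2}$, contradicting optimality; so $\SR_p(\M)=\omega$.)

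Next I would apply \cref{cor:prime}: since $\SR_p(\M)=\omega$, the model $\mc{M}$ is finitely generated. Now split into the two cases for a finitely generated model of $\PA(\mathcal{L})$. If $\mc{M}$ is prime, then \cref{cor:prime} gives $\SR(\M)=\omega$, contradicting $\SR(\M)=\omega+2$ from the previous paragraph. If $\mc{M}$ contains an $\mathcal{L}$-undefinable element, then $\mc{M}$ is a finitely generated model with undefinable elements, so by \cref{prop_fingenchar} its Scott complexity is exactly $\dSinf{\omega+1}$, not $\Sinf{\omega+2}$ — again a contradiction. Since every finitely generated model of $\PA(\mathcal{L})$ falls into one of these two cases, no model of $\PA(\mathcal{L})$ has Scott complexity $\Sinf{\omega+2}$.

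I do not expect a genuine obstacle here, as all of the substantive work lives in \cref{cor:prime} and \cref{prop_fingenchar}; the proof is essentially a bookkeeping argument. The only point deserving care is the opening step — extracting $\SR_p(\M)=\omega$ and $\SR(\M)=\omega+2$ from the bare assumption "Scott complexity $\Sinf{\omega+2}$" — and this is immediate from the final clause of \cref{thm:internalSC} (or from the elementary optimality argument indicated above).
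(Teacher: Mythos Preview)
Your proposal is correct and follows essentially the same route as the paper: deduce $\SR_p(\M)=\omega$ from the assumed Scott complexity, invoke \cref{cor:prime} to conclude $\mc{M}$ is finitely generated, and then cite \cref{prop_fingenchar} to obtain the contradiction. Your explicit case split between prime and non-prime finitely generated models is slightly more careful than the paper's terser wording (which silently folds the prime case into the appeal to \cref{prop_fingenchar}), but the argument is the same.
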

\begin{proof}
	If a model has Scott complexity $\Sinf{\omega+2}$, then it has parameterized Scott rank $\omega$. Hence, if a model of $\PA(\mathcal{L})$ has Scott complexity $\Sinf{\omega+2}$, then it must be finitely generated by  \cref{cor:prime}. However, such models have Scott complexity $d\text{-}\Sigma_{\omega+1}$ by \cref{prop_fingenchar}.
\end{proof}

Above we obtained a very neat characterization of models $\mc{M}\models\PA(\mathcal{L})$ whose Scott complexity is $\Pinf{\omega+1}$. Moreover, we have shown that the next realizable complexity is $\dSinf{\omega+1}$ and models of $\PA(\mathcal{L})$ from this class are exactly the finitely generated models. It is therefore natural to ask about other complexities. By \cref{cor_noSigmaomega+2}, the next two are $\Pinf{\omega+2}$ and $\dSinf{\omega+2}$. It is quite easy to deduce from the above results that all recursively saturated models of $\PA(\mathcal{L})$, being homogeneous, must have the Scott complexity exactly $\Pinf{\omega+2}$. However, \cref{prop:pinfomega2nohom} already delivers an example of a model which is not homogeneous but still has Scott complexity $\Pinf{\omega+2}.$ Below we show examples of homogeneous, not recursively saturated models of $\PA$ which also have Scott complexity $\Pinf{\omega+2}$, witnessing that this complexity class is much more diverse than the previous ones. Finally, we show that another natural class of nonstandard models of $\PA(\mathcal{L})$ all have Scott complexity $\dSinf{\omega+2}$.

Recall that a model $\mc{M}$ is \define{short recursively saturated} if and only if $\mc{M}$ realizes all the recursive bounded types, where $p(x,\bar{a})$ is a bounded type if and only if for some $c\in M$, $(x<c)\in p(x,\bar{a})$. If additionally $\mc{M}$ is nonstandard and not recursively saturated, then we say that it is \define{proper short recursively saturated}. By the result of \cite[Theorem 2.8]{Smor1981}, a countable model $\mc{M}\models \PA(\mathcal{L})$ is short recursively saturated if and only if $\mc{M}$ has an elementary end-extension to a recursively saturated model. 

We include a simple proof of the following proposition for completeness.

\begin{proposition}\label{prop:propshortrecsat}
    If $\mc{M}\models\PA(\mathcal{L})$ is nonstandard and short recursively saturated, then $\mc{M}$ is not finitely generated.
\end{proposition}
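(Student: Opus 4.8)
The plan is to prove the contrapositive: if $\mc{M}\models\PA(\mathcal{L})$ is nonstandard and finitely generated, then $\mc{M}$ is not short recursively saturated, because it omits a recursive bounded type.

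\emph{Choosing the generator.} First I would normalize the generator. Since $\mc{M}$ is finitely generated, $\mc{M}=K(c)$ for some $c\in M$. If $c$ is nonstandard, keep $a:=c$. If $c$ is standard, then $K(c)=K(\emptyset)$ is the prime model of $\Th(\mc{M})$, which is nonstandard by hypothesis; then I would pick any nonstandard $a\in M$ and note $K(a)=\mc{M}$, since every $\mathcal{L}$-definable element lies in $K(a)$ and, $\mc{M}$ being prime, these are all of $\mc{M}$. In either case we obtain a nonstandard $a\in M$ with $\mc{M}=K(a)$; I would also recall that, because $\PA(\mathcal{L})$ proves the least-number principle for all $\mathcal{L}$-formulas and hence has definable Skolem functions, $b\in K(a)$ if and only if $b$ is $\mathcal{L}$-definable from $a$.

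\emph{The omitted type.} For an $\mathcal{L}$-formula $\phi(u,v)$ write $\delta_\phi(x,v)$ for $\phi(x,v)\wedge\forall z\,(\phi(z,v)\to z=x)$, the assertion ``$\phi$ defines $x$ from $v$'', and set
\[ p(x,a)\ :=\ \{\,x<a\,\}\ \cup\ \{\,\lnot\delta_\phi(x,a)\ :\ \phi(u,v)\ \text{an}\ \mathcal{L}\text{-formula}\,\},\]
i.e.\ ``$x<a$ and $x$ is not definable from $a$''. I would check three things. This is a \emph{recursive} set of formulas, since the relevant formulas in the free variables $x,v$ form a decidable set (given $\psi$, parse it to test whether it is $x<v$ or of the displayed negated form). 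It is \emph{bounded}, witnessed by $a\in M$. And it is \emph{finitely satisfiable} in $\mc{M}$: given $\phi_1,\dots,\phi_n$, each $\delta_{\phi_i}(\cdot,a)$ holds of at most one element, so among the standard numbers $0,1,\dots,n$ (all $<a$, as $a$ is nonstandard) some $k$ has $\mc{M}\models\lnot\delta_{\phi_i}(k,a)$ for every $i\le n$, whence $\mc{M}\models\exists x\,(x<a\wedge\bigwedge_{i\le n}\lnot\delta_{\phi_i}(x,a))$. Thus $p(x,a)$ is a recursive bounded type over $\mc{M}$.

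\emph{Conclusion.} Finally, if some $b\in M$ realized $p(x,a)$, then $b\in K(a)=\mc{M}$ while $\mc{M}\models\lnot\delta_\phi(b,a)$ for every $\mathcal{L}$-formula $\phi$, i.e.\ $b$ is not $\mathcal{L}$-definable from $a$, contradicting $b\in K(a)$. Hence $p(x,a)$ is omitted and $\mc{M}$ is not short recursively saturated, which is the desired contrapositive. The proof is short; the only points needing the slightest care are the finite-satisfiability count and the insistence that the generator $a$ be chosen nonstandard, so that infinitely many elements lie below it (an alternative route via the Smoryński characterization of short recursive saturation seems to lead into more delicate territory, so I would avoid it).
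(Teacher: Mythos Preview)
Your proof is correct and, in fact, slightly more careful than the paper's (you explicitly arrange the generator $a$ to be nonstandard, which the paper glosses over). The strategy is the same in outline—exhibit a recursive bounded type that a finitely generated model must omit—but the specific type is different. The paper uses
\[
p(x,a):=\{\,x<a \wedge (\ulcorner\phi(y)\urcorner\in x \leftrightarrow \phi(a)) : \phi(y)\in\mathcal{L}\,\},
\]
i.e.\ ``$x$ codes $\tp(a)$'', and derives a contradiction via Tarski's undefinability of truth: a realizer $c$ would be definable from $a$, and that definition would yield a truth predicate for $\Th(\mc{M},a)$ inside $\Th(\mc{M},a)$. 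Your type ``$x<a$ and $x$ is not definable from $a$'' is more elementary—the contradiction is immediate from $K(a)=\mc{M}$, no appeal to Tarski needed—and your pigeonhole argument for finite satisfiability is cleaner than the paper's coding remark. The paper's route has the mild advantage of invoking a classical theorem the reader will recognize, but yours is shorter and self-contained.
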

\begin{proof}
    Let $\mc{M}\models \PA(\mathcal{L})$ be nonstandard and short recursively saturated. Aiming at a contradiction, assume that $\mc{M}$ is finitely generated from an element $a\in M$. Consider the following recursive bounded type with parameter $a$
    \[p(x,a):= \{x<a \wedge (\ulcorner\phi(y)\urcorner\in x \equiv\phi(a)) : \phi(y)\in \mc{L}\}.\]
    We observe that $p(x,a)$ is finitely satisfied in $\mc{M}$ because finite sets of natural numbers can be coded by a natural number and any element realizing $p(x,a)$ encodes $\Th(\mc{M}, a)$. By the short recursive saturation of $\mc{M}$, $p(x,a)$ is realized, say by an element $c\in M$. Since $a$ generates $\mc{M}$, $c$ is definable from $a$, say via formula $\theta(x,y)$. Let us put
    \[T(x):= \exists y\bigl(\theta(a,y)\wedge x\in y\bigr).\]
    By unfolding the definitions, it is very easy to observe that for any $\phi(x)\in\mc{L}_{\PA}$ we have
    \[(\mc{M},a)\models \phi(a) \equiv T(\ulcorner{\phi(x)\urcorner}). \]
    In particular, $T(x)$ defines a truth predicate for $\Th(\mc{M},a)$ in $\Th(\mc{M},a)$, which contradicts Tarski's theorem on the undefinability of truth.
\end{proof}

\begin{proposition}
    Suppose that $\mc{M}\models\PA(\mathcal{L})$ is proper short recursively saturated and $K({\mc{M}})$ is cofinal in $\mc{M}$. Then the Scott complexity of $\mc{M}$ is $\Pinf{\omega+2}$.
\end{proposition}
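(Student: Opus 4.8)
The plan is to reduce everything to one structural fact about $\mc M$ — that it is \emph{homogeneous} — and then to read off the Scott complexity from the framework of \cref{thm:internalSC} together with results already proved. The hypothesis that $K(\mc M)$ is cofinal will be used only to establish homogeneity.

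First I would prove homogeneity, likely as a separate lemma. The key point is that cofinality of $K(\mc M)$ forces every complete type realized over a finite tuple in $\mc M$ to be a bounded type. Concretely: given $\ba,\bb\in M$ with $(\mc M,\ba)\equiv(\mc M,\bb)$ and $c\in M$, pick a $\emptyset$-definable $e>c$, defined by $\delta(x)$, so that the parameter-free formula $\exists x\,(\delta(x)\wedge y<x)$ belongs to $\tp_{\mc M}(c/\ba)$. Since this type is realized it is coded in $\mc M$ by some $\varepsilon$ (\cite[Lemma 12.1]{Kaye}); transporting it along $\ba\mapsto\bb$ gives a type $p(\bb,y)$, still coded by $\varepsilon$, still containing $\exists x\,(\delta(x)\wedge y<x)$, and consistent over $(\mc M,\bb)$ because $(\mc M,\ba)\equiv(\mc M,\bb)$. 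Then the set $s(y)$ consisting of $\exists x\,(\delta(x)\wedge y<x)$ together with all implications $\ulcorner\phi_i\urcorner\in_{Ack}\varepsilon\rightarrow\phi_i(\bb,y)$ — where $(\phi_i)_{i\in\omega}$ recursively enumerates the formulas in the variable $y$ over $\bb$ — is a type recursive in the parameters $\bb,\varepsilon$, bounded, and consistent (each implication is either vacuously true in $\mc M$ or equivalent there to a member $\phi_i(\bb,y)$ of the consistent type $p(\bb,y)$). By short recursive saturation $s(y)$ is realized by some $d\in M$; such $d$ realizes the complete type $p(\bb,y)$, so $(\mc M,\ba,c)\equiv(\mc M,\bb,d)$. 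The symmetric statement holds as well, and a standard back-and-forth over the countable universe of $\mc M$ then yields an automorphism sending $\ba$ to $\bb$ and $c$ to $d$.

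With homogeneity the rest is short. Each automorphism orbit of a tuple $\ba$ equals $\{\bb:\tp_{\mc M}(\bb)=\tp_{\mc M}(\ba)\}$, which is defined by the $\Pinf{\omega}$ formula $\bigwwedge_{\phi\in\tp_{\mc M}(\ba)}\phi$; since $\Pinf{\omega}\subseteq\Sinf{\omega+1}$, every orbit is $\Sinf{\omega+1}$-definable, so $\SR(\mc M)\le\omega+1$ and $\mc M$ has a $\Pinf{\omega+2}$ Scott sentence. Meanwhile, $\mc M$ is nonstandard and short recursively saturated, hence not finitely generated by \cref{prop:propshortrecsat}, so $\SR_p(\mc M)\ne\omega$ by \cref{cor:prime}; since $\mc M$ is nonstandard $\SR_p(\mc M)\ge\omega$, and always $\SR_p(\mc M)\le\SR(\mc M)$, so $\SR(\mc M)=\SR_p(\mc M)=\omega+1$. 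It remains to exclude the smaller complexities: $\mc M$ has no $\Sinf{\omega+2}$ Scott sentence (that would force $\SR_p(\mc M)=\omega$), hence none of complexity $\dSinf{\omega+1}$ either (such models are finitely generated, \cref{prop_fingenchar}); no $\Pinf{\omega+1}$ Scott sentence (that would force $\SR(\mc M)=\omega$); no $\Pinf{\omega}$ one (\cref{cor_noPiomega}); and $\Sinf{\omega+1}$ is impossible for models of $\PA(\mathcal L)$ (\cref{cor_noSigmaomega+1}). Therefore the optimal Scott sentence of $\mc M$ has complexity $\Pinf{\omega+2}$.

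The main obstacle is the homogeneity lemma, and within it two things need care: that cofinality of $K(\mc M)$ is exactly what makes $\tp_{\mc M}(c/\ba)$ bounded (without it short recursive saturation says nothing, since it only governs bounded types), and the passage from the \emph{coded} type $p(\bb,y)$ to the genuinely \emph{recursive} bounded type $s(y)$ via the $\ulcorner\phi_i\urcorner\in_{Ack}\varepsilon\rightarrow\phi_i$ trick, checking that prefixing the single bound $\exists x\,(\delta(x)\wedge y<x)$ preserves recursiveness, boundedness and consistency. The remaining steps are bookkeeping with the results already in the paper.
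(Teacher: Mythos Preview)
Your argument is correct, and the route to homogeneity differs from the paper's. You run a direct back-and-forth: cofinality of $K(\mc M)$ lets you bound every complete $1$-type over a finite tuple by a $\emptyset$-definable element; you code the realized type, transport it along $\ba\mapsto\bb$, and invoke short recursive saturation on the resulting recursive bounded type $s(y)$. The paper instead passes, via Smory\'nski's theorem, to a recursively saturated elementary end-extension $\mc N\succeq_e\mc M$, extends the given finite partial elementary map to an automorphism $f'$ of $\mc N$ using homogeneity of countable recursively saturated models, and then observes that since $K(\mc M)=K(\mc N)$ is fixed pointwise by $f'$ and is cofinal in the initial segment $\mc M$, the automorphism restricts to $\mc M$. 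Your approach is more self-contained---no black-box appeal to Smory\'nski's result or to homogeneity of recursively saturated models---while the paper's is a two-line reduction to those structural facts. One minor point: your citation of \cite[Lemma~12.1]{Kaye} for ``realized types are coded'' only covers $\Sigma_n$-types; the full type is coded precisely because $\mc M$ is nonstandard and short recursively saturated (run the bounded recursive type $\{x<B\}\cup\{\ulcorner\phi\urcorner\in_{Ack} x\leftrightarrow\phi(c,\ba):\phi\}$ for any nonstandard $B$), so the step stands but the justification should be adjusted. The remaining computation of $\SR=\SR_p=\omega+1$ and the exclusion of lower complexities matches the paper.
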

\begin{proof}
    Fix $\mc{M}$ as above. By \cref{prop:propshortrecsat}, $\mc{M}$ is not finitely generated, so its Scott complexity is at least $\Pinf{\omega+2}$. However, $\mc{M}$ is homogeneous, as can be seen via the following simple argument: assume that $f:\mc{M}\rightarrow\mc{M}$ is a partial elementary map. Let $\mc{N}\succeq_e\mc{M}$ be an elementary end-extension of $\mc{M}$ which is recursively saturated. Then, by elementarity, $f:\mc{N}\rightarrow \mc{N}$ is a partial elementary map. Hence, by the homogeneity of $\mc{N}$, $f$ can be extended to an automorphism $f':\mc{N}\rightarrow\mc{N}$. However, since by elementarity $K({\mc{M}}) = K({\mc{N}})$ and $f'$ pointwise preserves $K({\mc{N}})$, $f'$ must preserve $\mc{M}$. So $f'$ restricts to an automorphism of $\mc{M}$.  Hence, the Scott complexity is at least $\Pinf{\omega+2}$, which matches the previously discussed lower bound.
    
\end{proof}

Below we show that, in general, proper short recursively saturated models have larger Scott complexity.

\begin{lemma}[{folklore; cf.~\cite[VII.2.11]{barwise1975}}] \label{lem:chains} 
    Suppose that $\{\mc{M}_{i}\}_{i\in\omega}$ is $\Pinf{\alpha}$ elementary chain. Let $\mc{M} = \bigcup_i \mc{M}_i$ and suppose that $\phi$ is a $\Pinf{\alpha+2}$ sentence which is satisfied by every $\mc{M}_i$. Then $\phi$ is true in $\mc{M}$.
\end{lemma}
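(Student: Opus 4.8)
The plan is to prove this by the $\Lomom$ version of the classical elementary chain argument: first show that satisfaction of sufficiently simple formulas transfers upward from each $\mc{M}_i$ to the union $\mc{M}$, and then read off the statement for the $\Pinf{\alpha+2}$ sentence $\phi$. For notation, being a $\Pinf{\alpha}$-elementary chain means that each inclusion $\mc{M}_i\subseteq\mc{M}_{i+1}$ both preserves and reflects $\Pinf{\alpha}$-formulas (equivalently $\Sinf{\alpha}$-formulas) with parameters from $\mc{M}_i$; composing these gives $\mc{M}_i\preceq_{\Pinf{\alpha}}\mc{M}_j$ for all $i\le j$. I will also use the elementary containments $\Pinf{\beta}\subseteq\Pinf{\gamma}$ and $\Sinf{\beta}\subseteq\Sinf{\gamma}$ for $\beta\le\gamma$, which are immediate from the definitions.

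The key claim, proved by induction on $\beta$, is: for every $\beta\le\alpha$, every $i$, every tuple $\bar{a}$ from $\mc{M}_i$, and every $\psi(\bar{x})\in\Pinf{\beta}\cup\Sinf{\beta}$,
\[ \mc{M}_i\models\psi(\bar{a}) \;\Longrightarrow\; \mc{M}\models\psi(\bar{a}). \]
The base case $\beta=0$ is preservation of quantifier-free formulas along the embedding $\mc{M}_i\hookrightarrow\mc{M}$. For the step, write $\psi\in\Sinf{\beta}$ as $\bigvvee_k\exists\bar{y}\,\chi_k(\bar{x},\bar{y})$ with $\chi_k\in\Pinf{\beta_k}$, $\beta_k<\beta$: if $\mc{M}_i\models\psi(\bar{a})$, pick a witness $\bar{b}$ in $\mc{M}_i$ for some $\chi_k$, and the induction hypothesis at level $i$ gives $\mc{M}\models\chi_k(\bar{a},\bar{b})$, hence $\mc{M}\models\psi(\bar{a})$. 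Write $\psi\in\Pinf{\beta}$ as $\bigwwedge_k\forall\bar{y}\,\chi_k(\bar{x},\bar{y})$ with $\chi_k\in\Sinf{\beta_k}$, $\beta_k<\beta$: if $\mc{M}_i\models\psi(\bar{a})$, fix $k$ and a tuple $\bar{b}$ from $\mc{M}$, choose $j\ge i$ with $\bar{a}\bar{b}$ in $\mc{M}_j$ (possible since $\bar{b}$ is finite), note that $\psi\in\Pinf{\beta}\subseteq\Pinf{\alpha}$ and $\mc{M}_i\preceq_{\Pinf{\alpha}}\mc{M}_j$, so $\mc{M}_j\models\psi(\bar{a})$ and in particular $\mc{M}_j\models\chi_k(\bar{a},\bar{b})$, then apply the induction hypothesis at level $j$ to get $\mc{M}\models\chi_k(\bar{a},\bar{b})$; as $k,\bar{b}$ were arbitrary, $\mc{M}\models\psi(\bar{a})$. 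One further unfolding extends the claim to $\psi\in\Sinf{\alpha+1}$: such a $\psi$ is $\bigvvee_k\exists\bar{y}\,\chi_k$ with $\chi_k\in\Pinf{\beta_k}$ and $\beta_k\le\alpha$, so a witness in $\mc{M}_i$ together with the already-proven claim at level $\beta_k\le\alpha$ transfers satisfaction to $\mc{M}$.

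To conclude, write $\phi\equiv\bigwwedge_j\forall\bar{x}_j\,\psi_j(\bar{x}_j)$ with $\psi_j\in\Sinf{\gamma_j}$ and $\gamma_j<\alpha+2$; since $\gamma_j\le\alpha+1$ we may take $\psi_j\in\Sinf{\alpha+1}$. To verify $\mc{M}\models\phi$, fix $j$ and a tuple $\bar{a}$ from $\mc{M}$; it lies in some $\mc{M}_i$, and since $\mc{M}_i\models\phi$ we get $\mc{M}_i\models\psi_j(\bar{a})$, whence $\mc{M}\models\psi_j(\bar{a})$ by the key claim. As $j$ and $\bar{a}$ were arbitrary, $\mc{M}\models\phi$.

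The only genuinely delicate points are the direction of the induction --- we need exactly the ``upward'' implications $\mc{M}_i\models\psi\Rightarrow\mc{M}\models\psi$, and it is worth checking that this one-directional claim does close under the induction (it does; the chain hypothesis is used only in the universal-quantifier case) --- and the routine but essential move of replacing a tuple from $\mc{M}$ by one living inside a single $\mc{M}_j$, which is where finiteness of tuples and the $\Pinf{\alpha}$-elementarity of the chain enter. Everything else is bookkeeping with the $\Sinf{}/\Pinf{}$ hierarchy.
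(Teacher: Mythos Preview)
Your proof is correct and follows essentially the same mechanism as the paper's: both rest on the fact that each inclusion $\mc{M}_i\hookrightarrow\mc{M}$ is $\Pinf{\alpha}$-elementary, after which one more layer of quantifier unwinding handles $\Pinf{\alpha+2}$. The paper argues by contradiction, pushing a $\Pinf{\alpha+1}$ witness for $\neg\phi$ down from $\mc{M}$ to some $\mc{M}_i$ via downward absoluteness of $\Pinf{\alpha+1}$-formulas along $\Pinf{\alpha}$-elementary embeddings (this folklore chain fact is quoted rather than proved), whereas you prove the dual upward-preservation statement for $\Sinf{\alpha+1}$ explicitly by induction on $\beta\le\alpha$, making your argument more self-contained.
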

\begin{proof}
	Assume towards a contradiction that $\M\models \neg \phi$. Then $\neg \phi\cong \bigvvee \exists \bar x\psi_j(\bar x)$ where $\psi_j(\bar x)\in \Pinf{\alpha+1}$. Let $\bar a \in \M$ and $j\in \omega$ be such that $\M\models \psi_j(\bar a)$ and as $\M=\bigcup_i \M_i$, let $i$ be so that $\bar a 
	\in\M_i$. Then as $\Pinf{\alpha+1}$ formulas are downwards absolute for $\Pinf{\alpha}$-elementary embeddings, $\M_{i}\models \psi_j(\bar a)$. But then $\M_i\models\neg \phi$, and by assumption $\M_i\models \phi$, a contradiction.
\end{proof}

\begin{proposition}
    Suppose that $\mc{M}\models\PA(\mathcal{L})$ is proper short recursively saturated such that $K(\mc{M})$ is not cofinal in $\mc{M}$. Then the Scott complexity of $\mc{M}$ is $d\text{-}\Sigma_{\omega+2}$.
\end{proposition}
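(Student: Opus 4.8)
The plan is to compute $\SR(\mathcal{M})$ and $\SR_p(\mathcal{M})$ and then read off the complexity from \cref{thm:internalSC}; concretely I will show $\SR_p(\mathcal{M})=\omega+1$ and $\SR(\mathcal{M})=\omega+2$, so that the successor-$\SR_p$ case of \cref{thm:internalSC} gives Scott complexity $\dSinf{\omega+2}$. Since a $\dSinf{\omega+2}$ Scott sentence will be produced along the way (forcing $\SR(\mathcal{M})\le\omega+2$), it is enough to (a) exhibit that Scott sentence and (b) rule out every realizable complexity strictly below $\dSinf{\omega+2}$. Part (b) is immediate from results already proved: $\mathcal{M}$ is nonstandard, excluding all finite complexities; $\Pinf{\omega}$, $\Sinf{\omega+1}$, $\Sinf{\omega+2}$ are impossible by \cref{cor_noPiomega}, \cref{cor_noSigmaomega+1} and \cref{cor_noSigmaomega+2}; $K(\mathcal{M})$ not cofinal gives $\mathcal{M}$ an $\mathcal{L}$-undefinable element, so $\SR(\mathcal{M})>\omega$ by \cref{thm_srundefinMOPA} and there is no $\Pinf{\omega+1}$ Scott sentence; and $\mathcal{M}$ is not finitely generated by \cref{prop:propshortrecsat}, so by \cref{prop_fingenchar} there is no $\dSinf{\omega+1}$ Scott sentence. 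Only $\Pinf{\omega+2}$ remains to be excluded. The structure carrying the analysis is the definable-closure cut $I=\{x\in M:x\le d\text{ for some }\mathcal{L}\text{-definable }d\}=\sup K(\mathcal{M})$: it is closed under the definable Skolem functions, hence $I\preceq_e\mathcal{M}$, with $I\subsetneq\mathcal{M}$ precisely because $K(\mathcal{M})$ is not cofinal; and since $\mathcal{M}$ has a recursively saturated elementary end-extension (\cite[Theorem 2.8]{Smor1981}), so does $I$, so $I$ is short recursively saturated with $K(I)=K(\mathcal{M})$ cofinal in $I$. By the previous proposition (or, when $\Th(\mathcal{M})=\Th(\mathbb{N})$, because then $I=\mathbb{N}$), $I$ is homogeneous with $\SR(I)\le\omega+1$.

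To rule out a $\Pinf{\omega+2}$ Scott sentence it suffices, by Karp's theorem and \cref{lem:omega+1Equiv}, to find a countable $\mathcal{N}\models\PA(\mathcal{L})$ with $\mathcal{N}\not\cong\mathcal{M}$ realizing exactly the same types as $\mathcal{M}$ (a $\Pinf{\omega+2}$ Scott sentence would make $\mathcal{M}$ rigidly determined by its realized types). Build $\mathcal{N}$ by gluing additional copies of the tall part on top of $\mathcal{M}$: take $I\preceq_e\mathcal{M}\preceq_e\mathcal{N}$ with $\mathcal{N}/\mathcal{M}\cong\mathcal{M}/I$ as end-extensions, or the union of the $\omega$-fold iteration of this gluing. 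Then $\mathcal{N}\succeq_e\mathcal{M}$ is an elementary end-extension, so $\mathcal{M}$ and $\mathcal{N}$ realize the same bounded types; and a back-and-forth over $I$ — using the homogeneity of $I$, the $\equiv_{\omega}$-self-similarity of the region above $I$, and \cref{cor:SsyEq} to transfer coded types — shows that every unbounded type realized high in $\mathcal{N}$ is already realized in $\mathcal{M}$, so $\mathcal{M}$ and $\mathcal{N}$ realize the same types while $\mathcal{N}\not\cong\mathcal{M}$. Alternatively, one can use \cref{lem:chains}: an elementary end-extension of models of $\PA(\mathcal{L})$ is $\equiv_{\omega}$- and hence $\Pinf{\omega}$-elementary by \cref{cor:SsyEq}, so a hypothetical $\Pinf{\omega+2}$ Scott sentence $\psi$ of $\mathcal{M}$ would transfer, along an elementary end-extension tower $\mathcal{M}=\mathcal{M}_0\preceq_e\mathcal{M}_1\preceq_e\cdots$ with all $\mathcal{M}_i\cong\mathcal{M}$ and recursively saturated union $\mathcal{M}_\omega$, to $\mathcal{M}_\omega\models\psi$, forcing $\mathcal{M}_\omega\cong\mathcal{M}$ — impossible since $\mathcal{M}$ is not recursively saturated. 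Hence $\SR(\mathcal{M})\ge\omega+2$, so the complexity is at least $\dSinf{\omega+2}$.

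For the upper bound I first show $\SR_p(\mathcal{M})=\omega+1$. It is $\ge\omega+1$ because $\mathcal{M}$ is not finitely generated (\cref{cor:prime}); for $\le\omega+1$ I choose a finite tuple $\bar a$ such that $(\mathcal{M},\bar a)$ has a $\Pinf{\omega+2}$ Scott sentence, equivalently $\SR(\mathcal{M},\bar a)\le\omega+1$ — for instance one for which the definable closure of $\bar a$ becomes cofinal in $\mathcal{M}$ (so the previous proposition applies to the short recursively saturated $(\mathcal{M},\bar a)$), or, more robustly, one for which $(\mathcal{M},\bar a)$ is pinned down by its realized types, the isomorphism back-and-forth having its base step from $\Th(\mathcal{M},\bar a)=\Th(\mathcal{N},\bar b)$ and $\SSy(\mathcal{M})=\SSy(\mathcal{N})$ via \cref{thm:SsyEqn2} and its extension steps from the homogeneity of $I$ (which $\bar a$ controls, lying above $K(\mathcal{M})$) and the coded types above $\bar a$. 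Given such an $\bar a$, the construction of \cite{AGNHTT} from a parameter carrying a $\Pinf{\omega+2}$ Scott sentence — an existential clause naming a tuple with the $\Pinf{\omega+1}$-type of $\bar a$, conjoined with a $\Pinf{\omega+2}$ clause saying that every tuple realizes one of the $\Pinf{\omega+1}$-types realized in $\mathcal{M}$ — yields a $\dSinf{\omega+2}$ Scott sentence for $\mathcal{M}$. Combined with the preceding paragraph, the Scott complexity of $\mathcal{M}$ is exactly $\dSinf{\omega+2}$.

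The main obstacle is the structural analysis of $\mathcal{M}$ above $I=\sup K(\mathcal{M})$ underpinning both halves: on the one hand that this tall part is homogeneous and $\equiv_{\omega}$-self-similar enough that attaching further copies of it does not enlarge the set of realized types, and on the other hand that there is a finite tuple $\bar a$ over which $(\mathcal{M},\bar a)$ is rigidified — e.g. that $\mathcal{M}$ is cofinally the definable closure of $K(\mathcal{M})\cup\{\bar a\}$, or that $(\mathcal{M},\bar a)$ is determined by its realized types. For the \cref{lem:chains} route one must in addition verify that $\mathcal{M}$ admits a proper elementary end-extension isomorphic to itself and that the $\omega$-limit of such self-gluings is recursively saturated.
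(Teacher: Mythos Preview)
Your overall architecture is right and close to the paper's, but there are two genuine problems, one of which is an actual error.

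\textbf{The first approach to excluding $\Pinf{\omega+2}$ is wrong.} By \cref{lem:omega+1Equiv}, ``realizing the same types'' is exactly $\equiv_{\omega+1}$, not $\equiv_{\omega+2}$. So producing $\mathcal{N}\not\cong\mathcal{M}$ with the same realized types only shows $\mathcal{M}$ has no $\Pinf{\omega+1}$ Scott sentence (which you already knew). Your parenthetical ``a $\Pinf{\omega+2}$ Scott sentence would make $\mathcal{M}$ rigidly determined by its realized types'' is false: $\SR(\mathcal{M})\le\omega+1$ says orbits are $\Sinf{\omega+1}$-definable, which is strictly more information than the first-order type. Drop this route entirely.

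\textbf{The chain-lemma approach is the paper's, but your version has gaps.} The paper builds $\mathcal{M}=\mathcal{M}_0\preceq_e\mathcal{M}_1\preceq_e\cdots$ with each $\mathcal{M}_i\cong\mathcal{M}$ as follows: pass to a recursively saturated elementary end-extension $\mathcal{M}'_i$, find $a_{i+1}\in\mathcal{M}'_i$ above $\mathcal{M}_i$ realizing $\tp_{\mathcal{M}}(a)$ (this uses that $a$ lies above all definable elements), and cut $\mathcal{M}'_i$ down to the initial segment where $K(a_{i+1})$ is cofinal. Crucially, the paper verifies $\mathcal{M}_{i+1}\cong\mathcal{M}$ by checking that $\mathcal{M}_{i+1}$ satisfies the explicit $\dSinf{\omega+2}$ Scott sentence $\theta$ it wrote down for the upper bound---so the two halves of the argument feed each other. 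The union $\mathcal{N}$ is then distinguished from $\mathcal{M}$ not by recursive saturation (your claim that $\mathcal{N}$ is recursively saturated is unjustified and not what the paper uses) but by having \emph{no last gap}, whereas $\mathcal{M}$ has last gap $\mathrm{gap}(a)$.

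\textbf{The upper bound.} Your idea of applying the previous proposition to $(\mathcal{M},a)$ with $K(a)$ cofinal is clean and works; what you are missing is the existence of such an $a$. The paper gets it for free: since $\mathcal{M}$ is \emph{not} recursively saturated, some recursive type $p(x,a)$ is omitted, and short recursive saturation then forces $K(a)$ to be cofinal (any $b\in M$ lies below an $a$-definable element, namely the witness to a finite fragment of $p$ that is inconsistent with $x<b$). The paper additionally observes that this $a$ is above all definable elements (since $K(\mathcal{M})$ is not cofinal), which is what makes the chain construction go through.
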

\begin{proof}
    Assume that $\mc{M}$ is as in the proposition statement and let $p(x,a)$ be a recursive type which is not realized in $\mc{M}$. Then the $a$-definable elements are cofinal in $\mc{M}$. By our assumption that $K(\mc{M})$ is not cofinal in $\mc{M}$, $a$ must be above all the definable elements.  We claim that the isomorphism type of $\mc{M}$ is determined by (the conjunction of the infinitary sentences describing)
    \begin{itemize}
        \item[(a)] its theory;
        \item[(b)] its standard system;
        \item[(c)] the fact that $\mc{M}$ is short recursive saturated, that is
    \[\bigwedge_{e}\forall y\bigl(\bigwedge_{n\in\omega} \exists x\bigwedge_{i\in W_e, i<n}\phi_i(x,y)\rightarrow \exists x\bigwedge_{i\in W_e}\phi_i(x,y)\bigr),\]
    where the outermost conjunction ranges over indices of total Turing machines that compute bounded types.
    \item[(d)] the type of $a$ together with the information that $a$-definable elements are cofinal, that is
     \[\exists x \bigl(\bigwedge_{\phi(x)\in tp_{\mc{M}}(a)} \phi(x) \wedge\forall y\bigvee_{\psi(x,y)}\exists !z \psi(x,z)\wedge \forall z\bigl(\psi(x,z)\rightarrow y<z)\bigr)\bigr).\]
    \end{itemize}
		Call the conjunction of the above sentences $\theta$ (observe that $\theta$ can be taken to be $\dSinf{\omega+2}$). We claim that $\theta$ is a Scott sentence for $\mc{M}$.\footnote{This was first observed by Smoryński in \cite[Theorem 2.9]{Smor1981}.} To see this, assume that $\mc{M}\models \theta$ and $\mc{N}\models \theta$. Then $\mc{M}$ and $\mc{N}$ have the same theory, the same standard system, and are both proper short recursively saturated. Let $b\in N$ be the element which realizes $tp_{\mc{M}}(a)$ in $\mc{N}$ and such that $K(b)$ is cofinal in $\mc{N}$. By the result of Smoryński there are $\mc{M}\preceq_e \mc{M}'$ and $\mc{N}\preceq_e\mc{N}'$ such that $\mc{M}'$ and $\mc{N}'$ are both recursively saturated. Since $(\mc{M}',a)$ and $(\mc{N}',b)$ have the same theory, the same standard systems, and are recursively saturated there is an isomorphism $f:\mc{M}'\rightarrow \mc{N}'$ such that $f(a) = b$. Hence, we must have $f[M] = N$
        and  $f\upharpoonright_{M}$ is an isomorphism between $M$ and $N$. This witnesses that the Scott complexity of $\mc{M}$ is at most $\dSinf{\omega+2}$.

    To show that $\dSinf{\omega +2}$ is optimal, it is enough to exclude the existence of its $\Pinf{\omega+2}$ Scott sentence (because by \cref{cor_noSigmaomega+2} it cannot have  $\Sinf{\omega+2}$ Scott complexity.)  Fix any $\Pinf{\omega+2}$ sentence $\phi$ which holds in $\mc{M}$. Put $\mc{M}_0 = \mc{M}$ and let $\mc{M}_i$ be given. By induction we can assume that $\mc{M}_i$ is isomorphic to $\mc{M}$, so in particular $\mc{M}_i$ is short-recursively saturated and there is an element $a_i\in M_i$ such that $a_i$ realizes $tp_{\mc{M}}(a)$ in $\mc{M}_i$, $K(a_i)$ is cofinal in $\mc{M}_i$ and $a_i$ is above all definable elements. Let $\mc{M}_{i+1}$ be obtained from $\mc{M}_i$ in the following way: let $\mc{M}'$ be a proper elementary end-extension of $\mc{M}_i$ which is recursively saturated.  Let $a_{i+1}\in M'$ be any element realizing $tp_{\mc{M}}(a)$ in $\mc{M}'$ which is above $M_{i}$. Such an element exists because $a$ is above all the definable elements. Indeed, fix any $b\in M'\setminus M_i$ and consider the set of formulas
    \[p(x,b,a):= \{b<x \wedge \phi(x)\equiv \phi(a) : \phi(x)\in \mc{L}_{\PA}\}.\]
    It is sufficient to check that $p(x,a,b)$ is a type over $M'$. If it's not, the there are finitely many formulae $\phi_1(x),\ldots \phi_n(x)\in tp_{\mc{M}}(a)$ such that $\mc{M}'\models \forall x \bigl(\bigwedge_i\phi_i(x)\rightarrow x<b\bigr)$. So, by induction in $\mc{M}'$ there is the least $b$ such that $\mc{M}'\models \forall x \bigl(\bigwedge_i\phi_i(x)\rightarrow x<b\bigr)$. Such a $b$ is clearly definable, which implies that $a$ is below a definable element, a contradiction.

    We take $\mc{M}_{i+1}$ to be the initial segment of $\mc{M}'$ in which $K(a_{i+1})$ is cofinal. Observe that $\mc{M}_{i+1}\vDash \theta$, so $\mc{M}_{i+1}$ is isomorphic to $\mc{M}$. Now consider $\mc{N} = \bigcup_i\mc{M}_i$. Observe that for every $i$ and every $b\in M_{i}$ we have that $(\mc{M}_i,b)\equiv_{\omega}(\mc{M}_{i+1},b)$. This follows from \cref{cor:SsyEq}, since $\mc{M}_{i+1}$ is an elementary end-extension of $\mc{M}_i$, hence in particular both models have the same standard system. Since each $\mc{M}_i$ is isomorphic to $\mc{M}$, it follows that each $\mc{M}_i$ satisfies $\phi$ and hence by \cref{lem:chains}, $\mc{N}\models \phi$. However, $\mc{N}$ is not isomorphic to $\mc{M}$, because in $\mc{N}$ there is no last gap. This shows that $\phi$ cannot be a Scott sentence for $\mc{M}$.
\end{proof}

\printbibliography

\end{document}